\NeedsTeXFormat{LaTeX2e}

\documentclass[12pt]{amsart}
\usepackage{ifthen}
\usepackage{graphicx}
\usepackage[margin=1in]{geometry}

\newtheorem{theorem}{Theorem}
\newtheorem{lemma}{Lemma}
\newtheorem{corollary}{Corollary}
\newtheorem{proposition}{Proposition}

\theoremstyle{definition}

\newtheorem{remark}{Remark}
\newtheorem*{acknowledgments}{Acknowledgments}

\theoremstyle{remark}
\newtheorem*{notation}{Notation}

\numberwithin{equation}{section}

\newcommand{\abs}[1]{\left| #1 \right|}
\newcommand{\expr}[1]{\left( #1 \right)}
\newcommand{\floor}[1]{\left\lfloor #1 \right\rfloor}
\newcommand{\norm}[1]{\left\| #1 \right\|}
\newcommand{\set}[1]{\left\{ #1 \right\}}
\newcommand{\scalar}[1]{\left\langle #1 \right\rangle}
\newcommand{\A}{\mathcal{A}}
\newcommand{\C}{\mathbf{C}}
\newcommand{\R}{\mathbf{R}}

\newcommand{\pr}{\mathbf{P}}
\newcommand{\ex}{\mathbf{E}}
\newcommand{\ind}{\mathbf{1}}
\newcommand{\eps}{\varepsilon}
\newcommand{\ph}{\varphi}
\newcommand{\ro}{\varrho}
\newcommand{\thet}{\vartheta}
\newcommand{\lap}{\mathcal{L}}
\newcommand{\sub}{\subseteq}
\newcommand{\cl}[1]{\overline{#1}}

\newcommand{\pv}{\mathop{\rm pv}}
\newcommand{\sfrac}[2]{\mbox{$\frac{#1}{#2}$}}

\providecommand{\id}{\mathop{\rm Id}\nolimits}
\providecommand{\sign}{\mathop{\rm sign}\nolimits}
\providecommand{\supp}{\mathop{\rm supp}\nolimits}
\providecommand{\res}{\mathop{\rm Res}\nolimits}

\providecommand{\real}{\mathop{\rm Re}\nolimits}
\providecommand{\imag}{\mathop{\rm Im}\nolimits}

\newcommand{\formula}[2][nolabel]
{\ifthenelse{\equal{#1}{nolabel}}
 {\begin{align*} #2 \end{align*}}
 {\ifthenelse{\equal{#1}{}}
  {\begin{align} #2 \end{align}}
  {\begin{align} \label{#1} #2 \end{align}}
 }
}

%
%

\title[Cauchy process on half-line and interval]{Spectral properties of the Cauchy process on half-line and interval}

\author{Tadeusz Kulczycki}
\address{Tadeusz Kulczycki \\ Institute of Mathematics \\ Polish Academy of
Sciences \\ ul. Kopernika 18 \\51-617 Wroclaw \\ Poland  and  Institute of Mathematics and Computer Science \\ Wroc{\l}aw University of Technology \\ ul. Wybrze{\.z}e Wyspia{\'n}\-skiego 27 \\ 50-370 Wroc{\l}aw, Poland}
\email{tkulczycki@impan.pan.wroc.pl}

\author{Mateusz Kwa{\'s}nicki}
\address{Mateusz Kwa{\'s}nicki \\ Institute of Mathematics and Computer Science \\ Wroc{\l}aw University of Technology \\ ul. Wybrze{\.z}e Wyspia{\'n}\-skiego 27 \\ 50-370 Wroc{\l}aw, Poland}
\email{mateusz.kwasnicki@pwr.wroc.pl}

\author{Jacek Ma{\l}ecki}
\address{Jacek Ma{\l}ecki \\ Institute of Mathematics and Computer Science \\ Wroc{\l}aw University of Technology \\ ul. Wybrze{\.z}e Wyspia{\'n}\-skiego 27 \\ 50-370 Wroc{\l}aw, Poland}
\email{jacek.malecki@pwr.wroc.pl}

\author{Andrzej Stos}
\address{Andrzej Stos \\ Laboratoire de Math\'ematiques\\ Universit\'e Clermont-Ferrand II \\ Campus des C\'ezeaux\\ 24 av. des Landais\\ 63177 Aubi\`ere Cedex, France }
\email{stos@math.univ-bpclermont.fr}

\thanks{The work was supported by the Polish Ministry of Science and Higher Education grant no. N~N201~373136}

\subjclass[2000]{60G52, 35J25, 35P05}

%
%

\begin{document}

\begin{abstract}
We study the spectral properties of the transition semigroup of the killed one-dimensional Cauchy process on the half-line $(0, \infty)$ and the interval $(-1, 1)$. This process is related to the square root of one-dimensional Laplacian $\A = -\sqrt{-\frac{d^2}{d x^2}}$ with a Dirichlet exterior condition (on a complement of a domain), and to a mixed Steklov problem in the half-plane. For the half-line, an explicit formula for generalized eigenfunctions $\psi_\lambda$ of $\A$ is derived, and then used to construct spectral representation of $\A$. Explicit formulas for the transition density of the killed Cauchy process in the half-line (or the heat kernel of $\A$ in $(0, \infty)$), and for the distribution of the first exit time from the half-line follow. The formula for $\psi_\lambda$ is also used to construct approximations to eigenfunctions of $\A$ in the interval. For the eigenvalues $\lambda_n$ of $\A$ in the interval the asymptotic formula $\lambda_n = \frac{n \pi}{2} - \frac{\pi}{8} + O(\frac{1}{n})$ is derived, and all eigenvalues $\lambda_n$ are proved to be simple. Finally, efficient numerical methods of estimation of eigenvalues $\lambda_n$ are applied to obtain lower and upper numerical bounds for the first few eigenvalues up to 9th decimal point.
\end{abstract}

\maketitle

%
%

\section{Introduction}

\noindent
Let $(X_t)$, $t \ge 0$, be the one-dimensional Cauchy process, that is a one-dimensional symmetric $\alpha$-stable process for $\alpha = 1$. Let us consider the Cauchy process killed upon first exit time from $D$ for $D = (0, \infty)$ and $D = (-1, 1)$. The purpose of this article is to study the spectral properties of the transition semigroup of this killed process, defined by
\formula{
  P^D_t f(x) & = \ex_x \expr{f(X_t) \; ; \; X_s \in D \; \text{for all} \; s \in [0,t]} , && f \in L^p(D) ,
}
and its infinitesimal generator $\A_D$, which is the operator $-\sqrt{-\frac{d^2}{d x^2}}$ with a Dirichlet exterior condition (on $D^c$); see the Preliminaries section for a formal introduction. The key problem in our paper is the description of eigenfunctions and eigenvalues of $\A_D$ and $P^D_t$. The study of the spectral theoretic properties of the semigroups of killed symmetric $\alpha$-stable processes has been the subject of many papers in recent years, see for example~\cite{bib:bk04,bib:bk06:spectral,bib:bk09,bib:cs97,bib:cs05,bib:cs06,bib:d04,bib:dm07}. Our paper is a continuation of the work of Ba{\~n}uelos and Kulczycki~\cite{bib:bk04}.

In the first part of the paper (Sections~\ref{sec:hl}--\ref{sec:pdt}), the identification of the spectral problem for $P^D_t$ and the so-called mixed Steklov problem in two dimensions, a method developed in~\cite{bib:bk04}, is applied for the case of the half-line $D = (0,\infty)$. Instead of searching for a function $f$ satisfying $P^D_t f(x) = e^{-\lambda t} f(x)$ for $x \in D$, $f(x) = 0$ for $x \in D^c$, we solve the equivalent mixed Steklov problem
\formula[]{
  \label{eq:intro:1} \Delta u(x, y) & = 0, && x \in \R , \, y > 0 , \\
  \label{eq:intro:2} \sfrac{\partial}{\partial y} u(x, 0) & = -\lambda u(x, 0), && x \in D , \\
  \label{eq:intro:3} u(x, 0) & = 0, && x \notin D ,
}
where $\Delta = \frac{\partial^2}{\partial x^2} + \frac{\partial^2}{\partial y^2}$ is the Laplace operator in $\R^2$.
The relation between $f$ and $u$ is here given by $u(x, y) = \ex_x f(X_y)$. In this way a nonlocal spectral problem for the pseudo-differential operator on $\R$ (or its semigroup $(P^D_t)$ on a domain~$D$) is transformed into a local one for a harmonic function of two variables, with spectral parameter in the boundary conditions. From the point of view of stochastic processes, this corresponds to the identification of the jump-type process $(X_t)$ with the trace left on the horizontal axis by the two-dimensional Brownian motion. Similar or related methods were also applied e.g. by DeBlassie and Mendez-H{\'e}rnandez~\cite{bib:d90,bib:d04,bib:dm07,bib:m02}, and the idea can be traced back to the work of Spitzer~\cite{bib:s58}, see also~\cite{bib:mo69}.

When $D = (0, \infty)$, the spectrum of $\A_D$ is equal to $(-\infty, 0]$ and is of continuous type, so there are no eigenfunctions of $\A_D$ in $L^2(D)$ (this follows easily from scaling properties of $\A_D$; see also Theorem~\ref{th:spec} below). It turns out, however, that for all $\lambda > 0$ there exist continuous generalized eigenfunctions $\psi_\lambda \in L^\infty(D)$. More precisely, we have $P^D_t \psi_\lambda = e^{-\lambda t} \psi_\lambda$. Using the identification described in the previous paragraph, an explicit formula for $\psi_\lambda$ is derived in Section~\ref{sec:hl}, see~\eqref{eq:hl:psi} and~\eqref{eq:hl:r}.

Surprisingly, to our knowledge, there are no earlier works concerning the spectral problem $P^D_t f(x) = e^{-\lambda t} f(x)$ for $x \in D$, $f(x) = 0$ for $x \in D^c$ for the half-line $D = (0,\infty)$, or the equivalent problem~\eqref{eq:intro:1}--\eqref{eq:intro:3}. However, there is an extensive literature concerning the related sloshing problem in the half-plane, i.e. the problem given by~\eqref{eq:intro:1}, \eqref{eq:intro:2} and the Neumann condition
\formula{
  \sfrac{\partial}{\partial y} u(x, 0) & = 0, && x \notin D,
}
in place of the Dirichlet one~\eqref{eq:intro:3}. The sloshing problem is one of the fundamental problems in the theory of linear water waves, see e.g.~\cite{bib:fk83} for a historical survey. The explicit solution of the sloshing problem in the half-plane for $D = (0, \infty)$ was first obtained by Friedrichs and Lewy in 1947~\cite{bib:fl47}, see also~\cite{bib:cmr05,bib:h64,bib:kk04}. Both methods and results of the Section~\ref{sec:hl} are closely related to their counterparts for the sloshing problem in the half-plane.

Sections~\ref{sec:B} and~\ref{sec:err} are rather technical and the remainder of the article relies on their results. Certain holomorphic functions play an important role in the derivation of $\psi_\lambda$, and one of these functions is studied in Section~\ref{sec:B}. In particular, the Fourier-Laplace transform of $\psi_\lambda$ is derived, see~\eqref{eq:hl:L}. The formula for $\psi_\lambda$ is of the form $\psi_\lambda(x) = \sin(\lambda x + \frac{\pi}{8}) - r(\lambda x)$, where $r$ is the Laplace transform of a positive integrable function. In Section~\ref{sec:err} we obtain estimates of the function $r$.

In Section~\ref{sec:spec} it is proved that that $\psi_\lambda$ yield a generalized eigenfunction expansion of $\A_D$ for $D = (0, \infty)$ in the sense of~\cite{bib:g59}, see e.g.~\cite{bib:psw89,bib:s82} and the references therein for similar results for differential operators. In other words, the transformation $\Pi f = \scalar{f, \psi_\lambda}$ is an isometric (up to a constant) mapping of $L^2(D)$ onto $L^2(0, \infty)$ which diagonalizes $\A_D$, $\Pi \A_D f = -\lambda \A_D f$, see Theorem~\ref{th:spec}.

The spectral decomposition and results of Section~\ref{sec:B} enable us to derive an explicit formula for the kernel function $p^D_t(x, y)$ of $P^D_t$, i.e. the transition density of the Cauchy process killed on exiting $D = (0, \infty)$ (or the heat kernel for $-\sqrt{-\frac{d^2}{d x^2}}$ with Dirichlet exterior condition on $D^c$), see Theorem~\ref{th:heat} in Section~\ref{sec:pdt}. This extends the results of~\cite{bib:bg:pre,bib:bgr:pre,bib:cks:pre}, where two-sided estimates for $p^D_t(x, y)$ were obtained (in a more general setting). As a corollary, we obtain an explicit formula for the density of the distribution of the first exit time from $(0, \infty)$, see Theorem~\ref{th:time}. This gives even a new result for 2-dimensional Brownian motion, see Corollary~\ref{cor:Brownian}. Namely, we obtain the distribution of some local time of 2-dimensional Brownian motion killed at some entrance time.

The spectral problem for the interval $D = (-1, 1)$ is studied in the second part of the article (Sections~\ref{sec:int}--\ref{sec:num}). We remark that due to translation invariance and scaling property of $(X_t)$, the results for $(-1, 1)$ extend easily to any open interval. It is well-known that there is an infinite sequence of continuous eigenfunction $\ph_n \in D$ such that $\A_D \ph_n = -\lambda_n \ph_n$ on $D$, $\ph_n \equiv 0$ on $D^c$, where $0 < \lambda_1 < \lambda_2 \le \lambda_3 \le ... \rightarrow \infty$. Each $\ph_n$ is either symmetric or antisymmetric. The study of the properties of $\ph_n$ and $\lambda_n$, dates back to the paper of Blumenthal and Getoor~\cite{bib:bg59}, where the Weyl-type asymptotic law was proved for a class of Markov processes in domains. In~\cite{bib:bg59} (formula (3.6)) it was proved that $\lambda_n/n \to \pi/2$ as $n \to \infty$. Over the last few years, there have been an increasing amount of research related to this topic see e.g.~\cite{bib:bk04,bib:bk06:spectral,bib:cs05,bib:cs06,bib:d04,bib:dm07,bib:k08,bib:k09:pre} and the references therein. In \cite{bib:bk04} it was shown that $\lambda_n \le \frac{n\pi}{2}$. The best known estimates for general $\lambda_n$, namely $\frac{n \pi}{4} \le \lambda_n \le \frac{n \pi}{2}$, were proved in~\cite{bib:cs05}, Example 5.1, where subordinate Brownian motions in bounded domains are studied. The simplicity of eigenvalues was studied in Section~5 of~\cite{bib:bk04}, where $\lambda_2$ and $\lambda_3$ are proved to be simple (simplicity of $\lambda_1$ is standard), and in~\cite{bib:k09:pre}, where all eigenvalues are proved to have at most double multiplicity. All these results are improved below.

In Section~\ref{sec:int} approximations $\tilde{\ph}_n$ to eigenfunctions $\ph_n$ are constructed by interpolating the translated eigenfunction for the half-line $\psi_\lambda(1 + x)$ and $\psi_\lambda(1 - x)$ with $\lambda = \frac{n \pi}{2} - \frac{\pi}{8}$. It is then shown that $\A_D \tilde{\ph}_n$ is nearly equal to $-\lambda \tilde{\ph}_n$. This is used in Section~\ref{sec:eigv} to prove that 
\formula{
  \left|\lambda_n -\left( \frac{n \pi}{2} - \frac{\pi}{8}\right)\right| & \le \frac{1}{n}, && n \ge 1,
}
and that the eigenvalues $\lambda_n$ are simple, see Theorem~\ref{th:int:lambda}. Finally, various properties of $\ph_n$ are shown in Section~\ref{sec:eigf}, see Corollaries~\ref{cor:sign}--\ref{cor:bounded}.

An application of numerical methods for estimation of eigenvalues to our problem is described in the last section. To get the upper bounds we use the Rayleigh-Ritz method for the Green operator, and for the lower bounds the Weinstein-Aronszajn method of intermediate problems is applied for \eqref{eq:intro:1}-\eqref{eq:intro:3}. The numerical bounds of ca. 10-digit accuracy are given by formula~\eqref{eq:num}.

Although probabilistic interpretation is the primary source of motivation, we use purely analytic arguments. In fact, the Cauchy process and related probabilistic notions are only used in Section~\ref{sec:pre} to give a concise definition of the killed semigroup $(P^D_t)$, and in Appendix~A.

%
%

\section{Notation and preliminaries}
\label{sec:pre}

\noindent
We begin with a brief introduction to the Cauchy process $(X_t)$ and its relation to the Steklov problem. We only collect the properties used in the sequel; for a more detailed exposition the reader is referred to~\cite{bib:bk04} or~\cite{bib:cs97,bib:k98}. For an introduction to more general Markov processes, see e.g.~\cite{bib:bg68,bib:d65,bib:s99}. In the final part of this section, basic facts concerning the Fourier transform, the Hilbert transform and Paley-Wiener theorems are recalled.

The one-dimensional Cauchy process $(X_t)$ is the symmetric $1$-stable process, that is, the L{\'e}vy process with one-dimensional distributions
\formula{
  \pr_x(X_t \in dy) & = p_t(y - x) dy = \frac{1}{\pi} \frac{t}{t^2 + (y - x)^2} \, dy .
}
Here $\pr_x$ corresponds to the process starting at $x \in \R$; $\ex_x$ is the expectation with respect to $\pr_x$. Clearly, the $\pr_x$-distributions of $(X_t + a)$ and $(b X_t)$ are equal to $\pr_{x+a}$-distribution of $(X_t)$ and $\pr_{b x}$-distribution of $(X_{b t})$ respectively; these are the translation invariance and scaling property mentioned in the Introduction. The transition semigroup of $(X_t)$ is defined by
\formula{
  P_t f(x) & = \ex_x f(X_t) = f * p_t(x) , && f \in L^p(\R) , \, p \in [1, \infty] , \, t > 0 ,
}
and $P_0 f(x) = f(x)$. This is a contraction semigroup on each $L^p(\R)$, $p \in [1, \infty]$, strongly continuous if $p \in [1, \infty)$, and when $f$ is continuous and bounded, then $P_t f$ converges to $f$ locally uniformly as $t \searrow 0$. The infinitesimal generator $\A$ of $(P_t)$ acting on $L^2(\R)$ is the square root of the second derivative operator. More precisely, for a smooth function $f$ with compact support we have
\formula{
  \A f(x) & = -\sqrt{-\sfrac{d^2}{d x^2}} \, f(x) = \frac{1}{\pi} \pv\int_{-\infty}^\infty \frac{f(y) - f(x)}{(y - x)^2} \, dy ,
}
where the integral is the Cauchy principal value.

Throughout this article, $D$ always denotes the interval $(-1, 1)$ or the half-line $(0, \infty)$. The time of the first exit from $D$ is defined by
\formula{
  \tau_D & = \inf \set{t \ge 0 \; : \; X_t \notin D} ,
}
and the semigroup of the process $(X_t)$ killed at time $\tau_D$ is given by
\formula{
  P^D_t f(x) & = \ex_x \expr{f(X_t) \; ; \; X_s \in D \; \text{for all} \; s \in [0,t]} = \ex_x \expr{f(X_t) \; ; \; t < \tau_D} ,
}
where $t \ge 0$. This is again a well-defined contraction semigroup on every $L^p(D)$ space, $p \in [1, \infty]$, strongly continuous if $p \in [1, \infty)$. If $f$ continuous and bounded in $\R$ and vanishes in $(-\infty, 0]$, then $P^D_t f$ converges to $f$ locally uniformly as $t \searrow 0$. The semigroup $(P^D_t)$ admits a jointly continuous kernel function $p^D_t(x, y)$ ($t > 0$, $x, y \in D$); clearly, $p^D_t(x, y) \le p_t(y - x) \le \frac{1}{\pi t}$. By $\A_D$ we denote the infinitesimal generator of $(P^D_t)$ acting on $L^2(D)$. Since this is a Friedrichs extention on $L^2(D)$ of $\A$ restricted to the class of smooth functions supported in a compact subset of $D$, we sometimes say that $\A_D$ is the square root of Laplacian with Dirichlet exterior conditions (on $D^c$).

Let us describe in more details the connection between the spectral problem for the semigroup $(P_t^D)$ and the mixed Steklov problem~\eqref{eq:intro:1}--\eqref{eq:intro:3}, established in \cite{bib:bk04}. The main idea is to consider the harmonic extension $u(x, y)$ of a function $f$ to the upper half-plane $x \in \R$, $y > 0$. Let $f \in L^p(\R)$ for some $p \in [1, \infty]$, and define 
\formula{
  u(x, y) & = P_y f(x) = \frac{1}{\pi} \int_{-\infty}^{\infty} \frac{y}{y^2 + (z - x)^2} \, f(z) dz .
}
Then $u$ is harmonic in the upper half-plane $\R \times (0,\infty)$, and if $p \in [1, \infty)$, then $u(\cdot, y)$ converges to $f$ in $L^p(\R)$ as $y \searrow 0$. Conversely, for $p \in (1, \infty)$, if $u(x, y)$ is harmonic in the upper half-plane and the $L^p(\R)$ norms of $u(\cdot, y)$ are bounded for $y > 0$, then $u(\cdot, y)$ converges in $L^p(\R)$ to some $f$ when $y \searrow 0$, and $u(x, y) = P_y f(x)$. By the definition,
\formula{
  \frac{\partial}{\partial y} \, u(x, 0) = \lim_{y \searrow 0} \frac{P_y f(x) - f(x)}{y}
}
pointwise for all $x \in \R$. When $f$ is in the domain of $\A$, then the above limit exists in $L^2(\R)$ and it is equal to $\A f$.

Our motivation to study the mixed Steklov problem~\eqref{eq:intro:1}--\eqref{eq:intro:3} comes from the following simple extension of Theorem~1.1 in~\cite{bib:bk04} to the case of unbounded domains. A partial converse is given in the proof of Theorem~\ref{th:halfline} in Section~\ref{sec:hl}.

\begin{proposition}
\label{prop:equivalence}
Let $D = (0, \infty)$ and $\lambda > 0$. Suppose that $f : \R \to \R$ is continuous and bounded, $f(x) = 0$ for $x \le 0$, and $u(x, y) = P_y f(x)$. If $P^D_t f(x) = e^{-\lambda t} f(x)$ for all $x \in D$, $t > 0$, then $u$ satisfies~\eqref{eq:intro:1}--\eqref{eq:intro:3}.
\end{proposition}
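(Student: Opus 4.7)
The statement has three components to verify: harmonicity of $u$ on $\R \times (0, \infty)$, the Dirichlet condition $u(\cdot, 0) = 0$ off $D$, and the Steklov condition $\sfrac{\partial}{\partial y} u(\cdot, 0) = -\lambda u(\cdot, 0)$ on $D$. The first two are immediate: $u(x,y) = P_y f(x)$ is the Poisson integral of $f$, hence harmonic in the upper half-plane, and since $f$ is bounded, continuous, and vanishes on $(-\infty, 0]$, the locally uniform convergence $P_y f \to f$ recalled in the Preliminaries gives $u(x, 0) = f(x) = 0$ for $x \le 0$, yielding~\eqref{eq:intro:3}.

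For the Steklov condition, fix $x \in D$ and write
\[
\frac{u(x, y) - u(x, 0)}{y} \; = \; \frac{(e^{-\lambda y} - 1) f(x)}{y} + \frac{(P_y - P^D_y) f(x)}{y},
\]
inserting the hypothesis $P^D_y f(x) = e^{-\lambda y} f(x)$ that is valid for $x \in D$. The first summand converges to $-\lambda f(x) = -\lambda u(x, 0)$, so I would reduce the proposition to proving that the remainder $(P_y - P^D_y) f(x)$ is $o(y)$ as $y \searrow 0$.

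By the strong Markov property of $(X_t)$ at the first exit time $\tau_D$,
\[
(P_y - P^D_y) f(x) \; = \; \ex_x\big[f(X_y);\, \tau_D \le y\big] \; = \; \ex_x\big[(P_{y - \tau_D} f)(X_{\tau_D});\, \tau_D \le y\big].
\]
Because $(X_t)$ is a pure-jump process and $D = (0, \infty)$ is open, $X_{\tau_D} \le 0$ almost surely. The key idea is to use the continuity of $f$ at the single boundary point $0$ (where $f(0) = 0$): given $\eps > 0$, choose $\delta > 0$ with $|f| \le \eps$ on $[0, \delta]$. A direct computation using $\int_a^\infty p_s(u)\, du = \sfrac{1}{\pi} \arctan(s/a)$ then yields, for every $z \le 0$ and $s \ge 0$,
\[
|(P_s f)(z)| \; \le \; \eps + \|f\|_\infty \int_{\delta - z}^{\infty} p_s(u) \, du \; \le \; \eps + \frac{\|f\|_\infty s}{\pi \delta}.
\]
Plugging this inside the expectation gives $|(P_y - P^D_y) f(x)| \le \big(\eps + \|f\|_\infty y/(\pi \delta)\big)\, \pr_x(\tau_D \le y)$.

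The remaining ingredient is a small-time estimate $\pr_x(\tau_D \le y) \le C_x y$ for $x > 0$ fixed, which is a routine consequence of decomposing $(X_t)$ into its jumps of modulus $\ge x/2$ (a Poisson process of rate $4/(\pi x)$) and the complementary symmetric $L^2$-martingale (of variance $xy/\pi$ at time $y$): the first mechanism triggers an exit with probability $O(y)$ by the exponential bound, and the second with probability $O(y)$ by Doob's maximal inequality. Combining these facts gives $\limsup_{y \searrow 0} |(P_y - P^D_y) f(x)|/y \le C_x \eps$, and letting $\eps \searrow 0$ delivers the desired vanishing. The main subtlety is that the bare bound $\pr_x(\tau_D \le y) = O(y)$ alone would only give $(P_y - P^D_y) f(x) = O(y)$; it is the continuity of $f$ at the boundary point $0$, entering through $\eps$, that upgrades this to the needed $o(y)$.
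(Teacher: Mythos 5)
Your argument is correct, and it reaches the conclusion by a route that differs from the paper's in the key estimate. Both proofs dispose of \eqref{eq:intro:1} and \eqref{eq:intro:3} immediately and reduce \eqref{eq:intro:2} to showing $P_y f(x) - P^D_y f(x) = o(y)$ for fixed $x > 0$; the difference lies in how this remainder is controlled. The paper integrates $|f|$ against the pointwise kernel bound for $p_y - p^D_y$ from Appendix~A (formula \eqref{eq:C:pDt}, itself based on the exit-time estimate \eqref{eq:C:tauD}, $\pr_x(\tau_D \le t) \le \min(1, t/x)$, obtained from the reflection-type inequality $2 p_s(y, D^c) \ge 1$ for $y \le 0$); this uses only $\norm{f}_\infty$ and yields the quantitative rate $|P_y f(x) - P^D_y f(x)|/y \le y(2 + \log\frac{x}{y}) \norm{f}_\infty / (\pi x^2)$. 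You instead work directly with the strong Markov representation $\ex_x[(P_{y-\tau_D} f)(X_{\tau_D}); \tau_D \le y]$, exploit the continuity of $f$ at the boundary point $0$ (where it vanishes) to make $(P_s f)(z)$ small for $z \le 0$ and small $s$, and prove the needed bound $\pr_x(\tau_D \le y) = O_x(y)$ by hand via the L\'evy--It\^o decomposition (big jumps as a Poisson process of rate $4/(\pi x)$, small-jump martingale controlled by Doob's inequality). Your computations check out: the tail bound $\int_{\delta - z}^\infty p_s(u)\,du \le s/(\pi\delta)$ for $z \le 0$, the variance $xy/\pi$, and the resulting $\pr_x(\tau_D \le y) \le 5y/(\pi x)$ are all correct, and you rightly identify that the crude bound $O(y)$ alone is insufficient and that the $\eps$--$\delta$ step is what upgrades it to $o(y)$. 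What each approach buys: yours is self-contained and more elementary (no kernel estimate needed, no reflection trick), but it uses the continuity of $f$ at $0$ and gives only a qualitative $o(y)$; the paper's route needs only boundedness of $f$, gives an explicit $O(y\log(1/y))$ rate for the difference quotient, and reuses the Appendix~A estimate later (e.g.\ in the proof of Theorem~\ref{th:halfline}), so it amortizes across the article. Two trivial remarks: $X_{\tau_D} \le 0$ already follows from right-continuity of paths ($X_{\tau_D} \in \cl{D^c} = (-\infty, 0]$), the pure-jump structure is not needed; and the Doob step is cleanest as the submartingale maximal inequality applied to $(X^{(2)}_t)^2$, which gives exactly the bound you state.
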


\begin{proof}
Formulas~\eqref{eq:intro:1} and~\eqref{eq:intro:3} hold true by the definition of $u$. Since $P^D_y f(x) = e^{-\lambda y} f(x)$, we have
\formula{
  \frac{u(x, y) - u(x, 0)}{y} & = \frac{P_y f(x) - f(x)}{y} = \frac{e^{-\lambda y} - 1}{y} \, f(x) - \frac{P_y f(x) - P^D_y f(x)}{y} \, .
}
As $y \searrow 0$, the first summand converges to $-\lambda f(x)$. The second one is estimated using formula~\eqref{eq:C:pDt} from Appendix~A (see also formula~(2.9) in~\cite{bib:bk04}). If $0 < y < x$, we have
\formula{
  \abs{\frac{P_y f(x) - P^D_y f(x)}{y}} & \le \int_0^\infty \frac{p_y(z - x) - p^D_y(x, z)}{y} \, |f(z)| dz \\
  & \le \frac{\norm{f}_\infty}{\pi} \int_0^\infty \min \expr{\frac{1}{x^2}, \frac{y}{x^2 z}, \frac{y}{x z^2}} dz = \frac{y (2 + \log \frac{x}{y}) \norm{f}_\infty}{\pi x^2} ,
}
and this tends to $0$ as $y \searrow 0$. Therefore,~\eqref{eq:intro:2} is also satisfied.
\end{proof}

Finally, we briefly recall some standard facts and definitions. The Fourier transform of a (complex-valued) function $f \in L^1(\R)$ is given by $\hat{f}(x) = \int f(t) e^{-i t x} dt$; this can be continuously extended to $L^p(\R)$ whenever $1 \le p < \infty$. For $p \in (1, \infty)$, the Hilbert transform of $f \in L^p(\R)$, denoted $H f$, satisfies $(H f)\hat{\;}(t) = -\hat{f}(t) (i \sign t)$. This is a bounded linear operator on $L^p(\R)$, and for almost all $t$,
\formula[eq:pre:H]{
  H f(t) & = \frac{1}{\pi} \pv\int_{-\infty}^\infty \frac{f(s)}{t - s} ds .
}
If $f$ is H{\"o}lder continuous, then the above formula holds for all $t \in \R$ and $H f$ is continuous, see e.g.~\cite{bib:s93}.

Let $\C_+ = \set{z \in \C \; : \; \imag z > 0}$ and $\cl{\C}_+ = \set{z \in \C \; : \; \imag z \ge 0}$; $\C_-$ and $\cl{\C}_-$ are defined in a similar manner. Let $1 < p < \infty$. If $F$ is in the (complex) Hardy space $H^p(\C_+)$, i.e. $F$ is holomorphic in $\C_+$ and the $L^p(\R)$ norms of $F(\cdot + i \eps)$ are bounded in $\eps > 0$, then, as $\eps \searrow 0$, $F(\cdot + i \eps)$ converges in $L^p(\R)$ to some $f \in L^p(\R)$, which is said to be the boundary limit of $F$. In this case
\formula[eq:pre:hilbert]{
  \imag f & = H(\real f) && \text{and} & \real f & = -H(\imag f) .
}
We also have
\formula[eq:pre:hilbert:sym]{
  H \tilde{f}(t) & = -H f(-t) , & \text{where } \tilde{f}(t) & = f(-t) .
}
The following version of Paley-Wiener theorem is important in the sequel, see e.g.~\cite{bib:d70}. For $p \in (1, \infty)$, a function $f \in L^p(\R)$ is a boundary limit of some function $F \in H^p(\C_+)$ if and only if $\hat{f}$ vanishes in $(-\infty, 0)$. In this case
\formula[eq:pre:paley]{
  F(z) & = \frac{1}{2 \pi} \int_0^\infty \hat{f}(x) e^{i z x} dx , && z \in \C_+ .
}

%
%

\section{Spectral problem in the half-line}
\label{sec:hl}

\begin{notation}
To facilitate reading, in this section we strive to use the following convention. We use small letters to denote functions of the real variable and capital letters for functions on the upper half-plane $\C_+$. Real-valued functions are denoted by Greek letters, whereas Latin letters are used for complex-valued functions.
\end{notation}

We study the eigenproblem~\eqref{eq:intro:1}--\eqref{eq:intro:3} for the half-line $D = (0, \infty)$ using methods which were earlier applied to the sloshing problem with semi-infinite dock, see~\cite{bib:cmr05,bib:fl47,bib:h64}. The solution $u$ is given as the imaginary part of a holomorphic function $F$ of a complex variable $z = x + i y$, $x \in \R$, $y \ge 0$. Such a function is automatically harmonic, hence~\eqref{eq:intro:1} is satisfied. Using the Cauchy-Riemann equations, we may restate~\eqref{eq:intro:2} and~\eqref{eq:intro:3} in the following equivalent form:
\formula[]{
  \label{eq:hl:1} & \imag (i F'(x) + \lambda F(x)) = 0 & x > 0 , \\
  \label{eq:hl:2} & \imag F(x) = 0 & x \le 0 .
}
Observe that for all $\thet \in \R$ and $t < 0$, the bounded holomorphic functions
\formula{
  F(z) & = e^{i \lambda z + i \thet}, \\
  F(z) & = e^{t \lambda z - i \arctan t}
}
satisfy~\eqref{eq:hl:1}, and for for all $t > 0$ the bounded holomorphic function
\formula{
  F(z) & = e^{t \lambda z}
}
satisfies~\eqref{eq:hl:2}. This suggests searching a solution of the form:
\formula[]{
  \label{eq:hl:sol:1} F(z) & = e^{i \lambda z + i \thet} - \int_{-\infty}^0 \ro(t) e^{t \lambda z - i \arctan t} dt , & & \real z \ge 0 , \, \imag z \ge 0 , \\
  \label{eq:hl:sol:2} F(z) & = \int_0^\infty \ro(t) e^{t \lambda z} dt , & & \real z \le 0 , \, \imag z \ge 0 ,
}
where $\ro$ is an unknown real function, say in some $L^p(\R)$, $p \in (1, \infty)$, and $\thet \in \R$. The values of $F$ given by~\eqref{eq:hl:sol:1} and~\eqref{eq:hl:sol:2} must agree when $\real z = 0$, $\imag z \ge 0$, that is,
\formula{
  \int_{-\infty}^\infty e^{i \chi(t)} \ro(t) e^{i t \lambda y} dt & = e^{-\lambda y + i \thet} , && y > 0 ,
}
where $\chi(t) = \arctan t_- = \arctan(\max(-t, 0))$. Replacing $\lambda y$ by $-s$ yields that
\formula[eq:hl:glue]{
   \int_{-\infty}^\infty e^{i \chi(t)} \ro(t) e^{-i t s} dt & = e^{s + i \thet} , && s < 0 .
}
The right-hand side is the Fourier transform of $g(t) = \frac{e^{i \thet}}{2 \pi} \frac{1}{1 + i t}$. Therefore, formula~\eqref{eq:hl:glue} is equivalent to the condition:
\formula[eq:hl:pw]{
  \text{the function $a(t) = e^{i \chi(t)} \ro(t) - g(t)$ satisfies $\hat{a}(s) = 0$ for $s < 0$.}
}
Note that both $\ro$ and $g$ are in $L^p(\R)$, so that $\hat{a}$ is well-defined and $\hat{a} \in L^p(\R)$. The foregoing remarks can be summarized as follows: any real function $\ro \in L^p(\R)$ satisfying~\eqref{eq:hl:pw} yields a solution to the problem~\eqref{eq:hl:1}--\eqref{eq:hl:2}.

By Paley-Wiener theorem,~\eqref{eq:hl:pw} is satisfied if and only if $a$ is the boundary limit of a unique function $A$ in the Hardy space $H^p(\C_+)$ in the upper half-plane $\C_+ = \set{z \in \C \; : \; \imag z > 0}$. Such a function $A$ can be derived as follows. Later in this section (formula~\eqref{eq:hl:h2}; see also Appendix~B), a function $B$ holomorphic in $\C_+$ and continuous on $\cl{\C}_+$ is defined, such that $i \chi(t) - B(t) \in \R$ for all $t \in \R$. The function
\formula{
  e^{-B(t)} a(t) & = e^{i \chi(t) - B(t)} \ro(t) - e^{-B(t)} g(t)
}
is therefore the boundary limit of $e^{-B(z)} A(z)$. Note that $e^{i \chi(t) - B(t)}$ is real. The function $g(t) = \frac{e^{i \thet}}{2 \pi} \frac{1}{1 + i t}$ is the boundary limit of a meromorphic function $G(z) = \frac{e^{i \thet}}{2 \pi} \frac{1}{1 + i z}$. The function $G$ has a simple pole at $i$, so that $G(z) (e^{-B(z)} - e^{-B(i)})$ is holomorphic in $\C_+$. It follows that
\formula[eq:hl:ro01]{
  e^{-B(t)} a(t) + g(t) (e^{-B(t)} - e^{-B(i)}) & = e^{i \chi(t) - B(t)} \ro(t) - e^{-B(i)} g(t)
}
is a boundary limit of
\formula{
  \tilde{A}(z) & = e^{-B(z)} A(z) + G(z) (e^{-B(z)} - e^{-B(i)}) , && z \in \C_+ .
}
Since $G$ and $A$ are in $H^p(\C_+)$, and $|e^{-B(z)}|$ is bounded (see~\eqref{eq:aux:Best}), we must have $\tilde{A} \in H^p(\C_+)$. Let $\tilde{G}(z) = \frac{e^{-i \thet}}{2 \pi} \frac{1}{1 - i z}$. Note that by~\eqref{eq:hl:ro01}, the boundary limit of the function $\tilde{A}(z) - e^{-\overline{B(i)}} \tilde{G}(z)$ (belonging to $H^p(\C_+)$) is equal to
\formula[eq:hl:ro02]{
  e^{i \chi(t) - B(t)} \ro(t) - e^{-B(i)} g(t) - \overline{e^{-B(i)} g(t)} ,
}
which is real for all $t \in \R$. The real part of the boundary limit of an $H^p(\C_+)$ function is the negative of the Hilbert transform of its imaginary part. Therefore, the function defined by~\eqref{eq:hl:ro02} is the Hilbert transform of the constant $0$, and so it is identically $0$. It follows that
\formula[eq:hl:ro]{
  e^{i \chi(t) - B(t)} \ro(t) & = e^{-B(i)} g(t) + \overline{e^{-B(i)} g(t)} = 2 \real \expr{e^{-B(i)} g(t)} , && t \in \R .
}
Also, $\tilde{A}(z) - e^{-\overline{B(i)}} \tilde{G}(z)$ has a boundary limit $0$, so it is identically zero in $\C_+$. Hence, for $z \in \C_+$,
\formula{
  A(z) & = e^{B(z)} \expr{\tilde{A}(z) - G(z) (e^{-B(z)} - e^{-B(i)})} \\
  & = \frac{e^{-i \thet}}{2 \pi} \frac{e^{B(z) - \overline{B(i)}}}{1 - i z} - \frac{e^{i \thet}}{2 \pi} \frac{1 - e^{B(z) - B(i)}}{1 + i z} \, .
}
Since $|e^{B(z)}|$ is bounded by a constant multiple of $1 + \sqrt{|z|}$ (see~\eqref{eq:aux:Best}), $A$ defined by the above formula is in $H^p(\C_+)$ for any $p \in (2, \infty)$, and $\ro$ given by~\eqref{eq:hl:ro} is in $L^p(\R)$. Also, the boundary limit of $A$ is the function $a$ defined in~\eqref{eq:hl:pw} (this can be verified e.g. by a direct calculation), so that $\ro$ indeed is a solution to~\eqref{eq:hl:pw}.

We now come to the construction of the function $B$. We want it to be holomorphic in $\C_+$ and continuous in $\cl{\C}_+$, and $i \chi(t) - B(t)$ is to be real for all $t \in \R$. Therefore,
\formula[eq:hl:B]{
  \imag B(t) & = \chi(t) = \arctan(t_-) , && t \in \R .
}
Clearly $B$ is not in $H^p(\C_+)$, so that $\real B(t)$ cannot be expressed directly as the Hilbert transform of $\imag B(t) = \chi(t)$. We can, however, apply the Hilbert transform to $\imag B'(t) = \chi'(t)$, which is an $L^2(\R)$ function. It follows that
\formula{
  \real B'(t) & = -H(\imag B')(t) = \frac{1}{\pi} \pv\int_{-\infty}^0 \frac{1}{(t - s)(1 + s^2)} \, ds , && t \in \R ,
}
the integral on the right-hand side being the Cauchy principal value for $t < 0$. This equation is studied in Appendix~B. It follows that up to an additive constant, which we choose to be zero, we have $\real B(t) = \eta(t)$, where $\eta$ is given by~\eqref{eq:aux:int0}. By~\eqref{eq:aux:int1} and~\eqref{eq:aux:int2}, for all $t \in \R$,
\formula[eq:hl:h1]{
\begin{split}
  B(t) = i \chi(t) + \eta(t)& = i \arctan(t_-) + \log \sqrt[4]{1 + t^2} - \frac{1}{\pi} \int_0^t \frac{\log |s|}{1 + s^2} \, ds \\
  & = i \arctan(t_-) + \frac{1}{\pi} \int_{-\infty}^0 \frac{\log |t - s|}{1 + s^2} \, ds .
\end{split}
}
This formula is easily extended to complex arguments, whenever $\imag z \ge 0$, we have
\formula[eq:hl:h2]{
  B(z) = \frac{1}{\pi} \int_{-\infty}^0 \frac{\log (z - s)}{1 + s^2} \, ds ,
}
provided that the continuous branch of $\log$ is chosen on the upper half-plane $\cl{\C}_+$ (i.e. the principal branch with $\log s = \log |s| + i \pi$ for $s < 0$). We emphasize that~\eqref{eq:hl:h1} and~\eqref{eq:hl:h2} agree for $z = t < 0$ (see also Section~\ref{sec:B} and Appendix~B).

For the explicit formula for $\ro$, $B(i)$ needs to be computed. By~\eqref{eq:aux:pi8} and~\eqref{eq:aux:sqrt2},
\formula[eq:hl:hi]{
\begin{split}
  B(i) & = \frac{1}{\pi} \int_{-\infty}^0 \frac{\log (i - s)}{1 + s^2} \, ds = \frac{1}{\pi} \int_0^\infty \frac{\log (i + s)}{1 + s^2} \, ds \\
  & = \frac{1}{2 \pi} \int_0^\infty \frac{\log (1 + s^2)}{1 + s^2} \, ds + \frac{i}{\pi} \int_0^\infty \frac{\frac{\pi}{2} - \arctan s}{1 + s^2} \, ds = \frac{\log 2}{2} + \frac{i \pi}{8} \, .
\end{split}
}
Now~\eqref{eq:hl:ro} yields that
\formula{
  \ro(t) & = 2 e^{B(t) - i \chi(t)} \real \expr{e^{-B(i)} g(t)} = 2 e^{\eta(t)} \real \expr{\frac{e^{i (\thet - \frac{\pi}{8})}}{2 \pi \sqrt{2}} \frac{1}{1 + i t}} \\
  & = \frac{\sqrt{2}}{2 \pi} e^{\eta(t)} \frac{\cos (\thet - \frac{\pi}{8}) + t \sin (\thet - \frac{\pi}{8})}{1 + t^2} \, , && t \in \R .
}
Since $\thet \in \R$ is arbitrary, we conclude that there are two linearly independent solutions for $\ro$, corresponding to $\thet = \frac{\pi}{8}$ and $\thet = \frac{5 \pi}{8}$ respectively,
\formula{
  \ro(t) & = \frac{\sqrt{2}}{2 \pi} \frac{1}{1 + t^2} \, e^{\eta(t)} , & \text{and} && \tilde{\ro}(t) & = \frac{\sqrt{2}}{2 \pi} \frac{t}{1 + t^2} \, e^{\eta(t)} .
}
The solution to~\eqref{eq:hl:1}--\eqref{eq:hl:2} corresponding to $\thet = \frac{\pi}{8}$ and $\ro$ as above is therefore given by
\formula[]{
  \label{eq:hl:F:1a}
  F(z) & = e^{i \lambda z + i \frac{\pi}{8}} - \frac{\sqrt{2}}{2 \pi} \int_{-\infty}^0 \frac{1}{1 + t^2} \, e^{\eta(t)} e^{t \lambda z - i \arctan t} dt , & & \real z \ge 0 , \, \imag z \ge 0 \\
  \label{eq:hl:F:2}
  F(z) & = \frac{\sqrt{2}}{2 \pi} \int_0^\infty \frac{1}{1 + t^2} \, e^{\eta(t)} e^{t \lambda z} dt , & & \real z \le 0 , \, \imag z \ge 0 .
}
By~\eqref{eq:aux:intest}, we have $\ro \in L^1(\R)$, and so $F$ is bounded and continuous. Furthermore, it can be easily verified that the solution corresponding to $\thet = \frac{5 \pi}{8}$ and $\tilde{\ro}$ is given by $\frac{F'(z)}{\lambda}$. Since $\tilde{\ro}$ decays at infinity as $|t|^{-\frac{1}{2}}$, $F'(z)$ has a singularity of order $|z|^{-\frac{1}{2}}$ at zero and it is not bounded near $0$. For that reason, in the sequel we only study the solution $F(z)$ given by~\eqref{eq:hl:F:1a}--\eqref{eq:hl:F:2}.

Since $e^{-i \arctan t} = (1 - i t) / \sqrt{1 + t^2}$ and $e^{\eta(t)} = e^{-\eta(-t)} \sqrt{1 + t^2}$ (see~\eqref{eq:aux:etasym}), we can rewrite~\eqref{eq:hl:F:1a} as
\formula[eq:hl:F:1]{
  F(z) & = e^{i \lambda z + i \frac{\pi}{8}} - \frac{\sqrt{2}}{2 \pi} \int_0^\infty \frac{1 + i t}{1 + t^2} \, e^{-\eta(t)} e^{-t \lambda z} dt , & & \real z \ge 0 , \, \imag z \ge 0 .
}
Therefore, we proved the following theorem.

\begin{theorem}
\label{th:halfspace}
The bounded solution of~\eqref{eq:intro:1}--\eqref{eq:intro:3} for $D = (0, \infty)$ is given by
\formula[eq:hl:u1]{
\begin{split}
  u(x, y) & = e^{-\lambda y} \sin(\lambda x + \sfrac{\pi}{8}) \\
  & \hspace*{-5mm} - \frac{\sqrt{2}}{2 \pi} \int_0^\infty \frac{t \cos(t \lambda y) - \sin(t \lambda y)}{1 + t^2} \exp\expr{-\frac{1}{\pi} \int_0^\infty \frac{\log (t + s)}{1 + s^2} ds} e^{-t \lambda x} dt
\end{split}
}
for $x \ge 0$, $y \ge 0$, and
\formula[eq:hl:u2]{
  u(x, y) & = \frac{\sqrt{2}}{2 \pi} \int_0^\infty \frac{\sin(t \lambda y)}{1 + t^2} \exp\expr{\frac{1}{\pi} \int_0^\infty \frac{\log (t + s)}{1 + s^2} ds} e^{t \lambda x} dt
}
for $x \le 0$ and $y \ge 0$.
\end{theorem}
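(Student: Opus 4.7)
The proof will be a direct verification: set $u(x,y) = \imag F(z)$ where $z = x + iy$ and $F$ is the holomorphic function constructed in the preceding discussion via formulas \eqref{eq:hl:F:1}--\eqref{eq:hl:F:2}. Since $F$ is holomorphic on $\C_+$ (and continuous on $\cl{\C}_+$), $u$ is harmonic, giving \eqref{eq:intro:1}. The boundary conditions \eqref{eq:intro:2}--\eqref{eq:intro:3} follow from \eqref{eq:hl:1}--\eqref{eq:hl:2} and the Cauchy-Riemann equations: indeed, on the axis $y = 0$ one has $\imag F(x) = u(x,0)$ and $\imag(iF'(x)) = \real F'(x) = \frac{\partial}{\partial x}\real F(x) = \frac{\partial}{\partial y}\imag F(x) = \frac{\partial}{\partial y}u(x,0)$ by Cauchy-Riemann, so \eqref{eq:hl:1} reads $\frac{\partial}{\partial y}u(x,0) + \lambda u(x,0) = 0$ for $x > 0$ and \eqref{eq:hl:2} reads $u(x,0) = 0$ for $x \le 0$. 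Boundedness of $u$ follows from $\ro \in L^1(\R)$ (by \eqref{eq:aux:intest}) and the fact that the exponential factors $e^{t\lambda z}$ and $e^{-t\lambda z}$ are of modulus at most $1$ on the relevant half-planes.

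The remaining task is just to compute the imaginary parts explicitly. For the right half-plane $x \ge 0$, using \eqref{eq:hl:F:1} with $z = x + iy$, the first summand yields $e^{-\lambda y}\sin(\lambda x + \frac{\pi}{8})$, while in the integrand I expand
\formula{
  (1 + it)(\cos(t\lambda y) - i\sin(t\lambda y)) & = \bigl(\cos(t\lambda y) + t\sin(t\lambda y)\bigr) + i\bigl(t\cos(t\lambda y) - \sin(t\lambda y)\bigr),
}
so that the imaginary part of the integrand is $\frac{t\cos(t\lambda y) - \sin(t\lambda y)}{1+t^2}\,e^{-\eta(t)}e^{-t\lambda x}$. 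For the left half-plane $x \le 0$, using \eqref{eq:hl:F:2}, the imaginary part of $e^{t\lambda z} = e^{t\lambda x}(\cos(t\lambda y) + i\sin(t\lambda y))$ gives the integrand $\frac{\sin(t\lambda y)}{1+t^2}\,e^{\eta(t)}e^{t\lambda x}$.

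To match the form of \eqref{eq:hl:u1}--\eqref{eq:hl:u2}, I need to rewrite $\pm\eta(t)$ as $\pm\frac{1}{\pi}\int_0^\infty \frac{\log(t+s)}{1+s^2}\,ds$ for $t > 0$. This follows from the integral representation \eqref{eq:hl:h2}: for real $t > 0$ one has $\chi(t) = \arctan(t_-) = 0$, hence $B(t) = \eta(t)$, and after the substitution $s \mapsto -s$ in \eqref{eq:hl:h2} one obtains precisely $\eta(t) = \frac{1}{\pi}\int_0^\infty \frac{\log(t+s)}{1+s^2}\,ds$.

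There is essentially no hard step, since the analytic construction of $F$ already encodes all the difficulty; the only points requiring care are the continuity of $F$ across $\real z = 0$ (which holds by design, since $\ro$ was chosen so that \eqref{eq:hl:glue} is satisfied, making \eqref{eq:hl:F:1a} and \eqref{eq:hl:F:2} agree on the imaginary axis) and the justification that $F$ is bounded on $\cl{\C}_+$, so that $u$ indeed qualifies as \emph{the} bounded solution referred to in the statement. Uniqueness of the bounded solution is not claimed here (and is deferred to Section~\ref{sec:spec}); the theorem only asserts that the displayed $u$ is a bounded solution, which is what the above calculation delivers.
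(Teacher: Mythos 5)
Your proposal is correct and takes essentially the same route as the paper: Theorem~\ref{th:halfspace} is presented there as the summary of the Section~\ref{sec:hl} construction (``Therefore, we proved the following theorem''), i.e.\ one sets $u=\imag F$ with $F$ given by \eqref{eq:hl:F:1}--\eqref{eq:hl:F:2}, obtains \eqref{eq:intro:1}--\eqref{eq:intro:3} from \eqref{eq:hl:1}--\eqref{eq:hl:2} via the Cauchy--Riemann equations, uses $\ro\in L^1(\R)$ for boundedness, and identifies $\eta(t)=\frac{1}{\pi}\int_0^\infty\frac{\log(t+s)}{1+s^2}\,ds$ for $t>0$. Your explicit computation of the imaginary parts and your remark that uniqueness of the bounded solution is not being claimed are both consistent with the paper's treatment.
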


The main result of this section, stated below, follows from Theorem~\ref{th:halfspace} and a partial converse to Proposition~\ref{prop:equivalence}.

\begin{figure}
\centering
\begin{tabular}{cc}
\includegraphics[width=5cm]{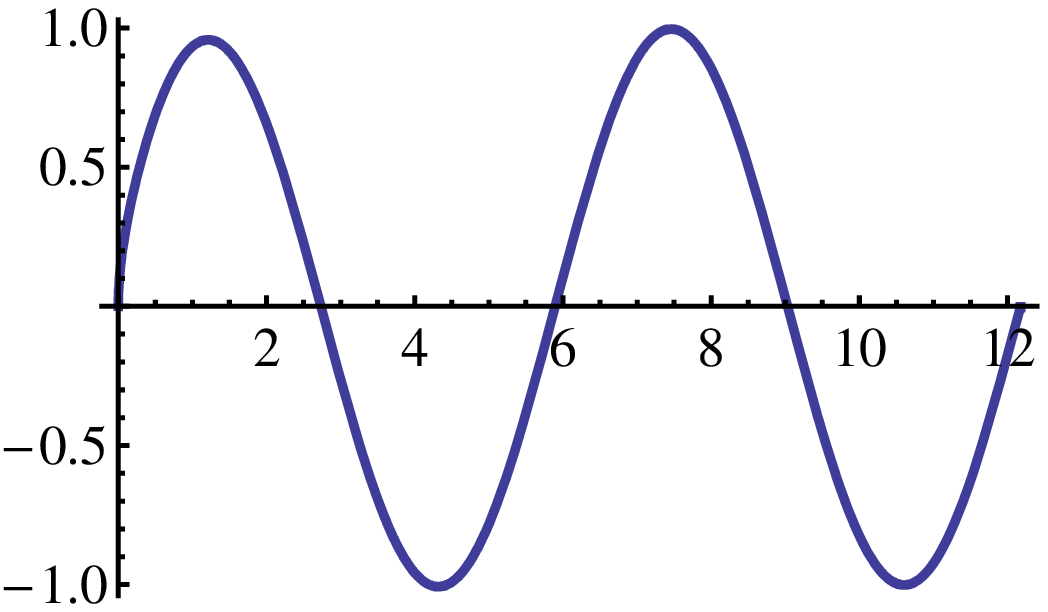} & \includegraphics[width=5cm]{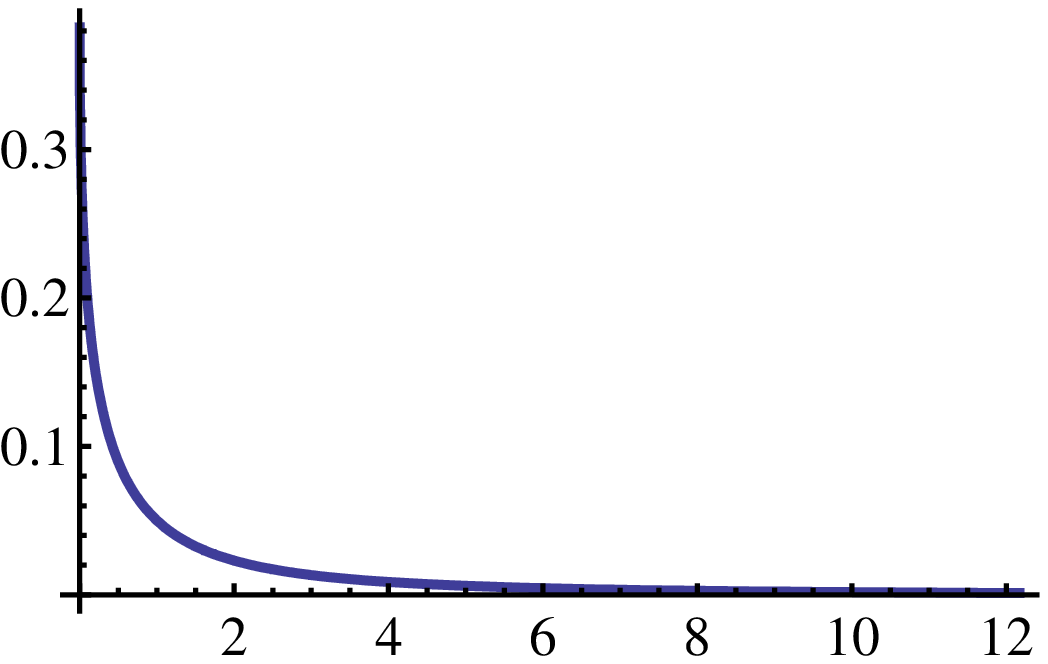} \\
(a) & (b) \\
\end{tabular}
\caption{(a) Graph of $\psi_1$; (b) Graph of the remainder term $r(x) = \sin(x + \frac{\pi}{8}) - \psi_1(x)$}
\end{figure}

\begin{theorem}
\label{th:halfline}
Let $D = (0, \infty)$. For $\lambda > 0$, the function
\formula[eq:hl:psi]{
  \psi_\lambda(x) & = \sin(\lambda x + \sfrac{\pi}{8}) - r_\lambda(x), && x > 0 ,
}
where
\formula[eq:hl:r]{
  r_\lambda(x) & = r(\lambda x) = \frac{\sqrt{2}}{2 \pi} \int_0^\infty \frac{t}{1 + t^2} \exp\expr{-\frac{1}{\pi} \int_0^\infty \frac{\log (t + s)}{1 + s^2} ds} e^{-t \lambda x} dt ,
}
is the eigenfunction of the semigroup $(P^D_t)$ acting on $C(D)$ corresponding to eigenvalue $\lambda$.
\end{theorem}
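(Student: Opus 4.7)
My plan is to identify $\psi_\lambda$ as the boundary trace on the real axis of the harmonic function $u$ constructed in Theorem~\ref{th:halfspace}, and then to transfer the mixed Steklov statement back to the semigroup eigenproblem via a converse to Proposition~\ref{prop:equivalence}. Setting $y = 0$ in~\eqref{eq:hl:u1}, the bracket $t \cos(t \lambda y) - \sin(t \lambda y)$ reduces to $t$ and the resulting expression matches~\eqref{eq:hl:psi}--\eqref{eq:hl:r} term by term; setting $y = 0$ in~\eqref{eq:hl:u2} yields $u(x, 0) = 0$ for $x \le 0$. Continuity of $u$ up to the axis, already part of Theorem~\ref{th:halfspace} (since $\ro \in L^1(\R)$), lets me extend $\psi_\lambda$ continuously by zero across the origin, so $\psi_\lambda \in C(\R) \cap L^\infty(\R)$; boundedness of $r_\lambda$ follows from the estimates of Section~\ref{sec:err}.

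Next I would identify $u$ with the Poisson extension: both $(x, y) \mapsto P_y \psi_\lambda(x)$ and $u$ are bounded and harmonic in $\C_+$ with the same bounded continuous trace $\psi_\lambda$, so uniqueness of bounded harmonic extensions gives $u(x, y) = P_y \psi_\lambda(x)$. Read through the pointwise identity $\partial_y P_y \psi_\lambda(x)|_{y = 0^+} = \A \psi_\lambda(x)$, the Steklov condition~\eqref{eq:intro:2} is precisely $\A \psi_\lambda(x) = -\lambda \psi_\lambda(x)$ for $x > 0$. A Dynkin-type martingale argument applied to the stopped process $(X_{s \wedge \tau_D})$ then shows that $s \mapsto e^{\lambda (s \wedge \tau_D)} \psi_\lambda(X_{s \wedge \tau_D})$ is a martingale under $\pr_x$, so that optional stopping at $t \wedge \tau_D$ yields
\[
\psi_\lambda(x) \;=\; \ex_x\bigl[e^{\lambda (t \wedge \tau_D)} \psi_\lambda(X_{t \wedge \tau_D})\bigr].
\]
Since $X_{\tau_D} \in (-\infty, 0]$ $\pr_x$-almost surely and $\psi_\lambda$ vanishes there, the contribution from $\{\tau_D \le t\}$ drops out, leaving $e^{\lambda t} P^D_t \psi_\lambda(x) = \psi_\lambda(x)$, which is the claim.

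The principal obstacle is justifying the Dynkin step, since $\psi_\lambda$ is bounded continuous but not in the classical $L^2$-domain of $\A$, and $\A \psi_\lambda$ is controlled only through the normal trace $\partial_y u$ at the boundary. I would handle this by lifting everything to planar Brownian motion $B$ on the upper half-plane: boundedness and harmonicity of $u$ make $u(B_s)$ a bounded martingale, and the mixed boundary conditions~\eqref{eq:intro:2}--\eqref{eq:intro:3} translate, through an It\^{o}--Tanaka argument at the horizontal axis, into an exponential compensator $e^{\lambda L_s}$, where $L_s$ is the local time of the vertical coordinate at zero accumulated inside $D$, with the process stopped at the first hit of the dock $\{(a, 0) : a \le 0\}$. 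Projecting onto the horizontal axis via Spitzer's identification of the Cauchy process with the zero-crossing trace of planar Brownian motion~\cite{bib:s58} then delivers the desired identity without any smoothness requirement on $\psi_\lambda$.
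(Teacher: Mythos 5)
Your first half (identifying $\psi_\lambda$ as the trace $u(\cdot,0)$ from Theorem~\ref{th:halfspace}, extending by zero, and using boundedness plus uniqueness of bounded harmonic extensions to get $P_y\psi_\lambda(x)=u(x,y)$) coincides with the paper's opening moves. After that you diverge genuinely: the paper stays purely analytic, proving the quantitative domination $\bigl|y^{-1}(P_y\psi_\lambda(x)-e^{-\lambda y}\psi_\lambda(x))\bigr|\le c_1(\lambda)\,y^{1/4}x^{-3/4}$ by splitting the integral in \eqref{eq:hl:u1} at $t=1/(\lambda y)$, then replacing $P_y$ by $P^D_y$ via the kernel estimate \eqref{eq:C:pDt} together with $|\psi_\lambda(x)|\le 2\sqrt{\lambda x}$ (see \eqref{eq:err:psisqrt}), and finally showing that the one-sided $t$-derivative of $e^{\lambda t}P^D_t\psi_\lambda(x)$ vanishes, so this function is constant. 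Your route instead transfers the Steklov condition probabilistically, through planar Brownian motion, boundary local time and Spitzer's identification \cite{bib:s58}; this is in the spirit of the original equivalence of \cite{bib:bk04} and of Corollary~\ref{cor:Brownian}, and if completed it would give a conceptually transparent proof, at the price of importing stochastic-calculus machinery that the paper deliberately confines to Appendix~A.

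The gap is that the decisive step is named rather than proved. Your first formulation (Dynkin's formula for $e^{\lambda(s\wedge\tau_D)}\psi_\lambda(X_{s\wedge\tau_D})$) is, as you concede, not available: $\psi_\lambda$ is not in the domain of $\A_D$, and the pointwise identity $\partial_y P_y\psi_\lambda(x)\big|_{y=0^+}=\A\psi_\lambda(x)$ with $\A$ the principal-value operator is itself something to be established, so invoking it is circular. In the planar fix, the assertion that $e^{\lambda L_s}u(B_s)$, stopped at $T_A$, is a (local, then bounded hence true) martingale requires an It\^o--Tanaka decomposition of the even reflection of $u$ across the axis, and this is not a routine citation here: $\nabla u$ blows up like $|z|^{-1/2}$ at the tip of the dock (cf.\ the discussion of $F'$ after \eqref{eq:hl:F:2}), and $\partial_y u(x,0)$ blows up as $x\uparrow 0$ on the dock side, so you need a localization away from the corner (or an approximation of $u$) and a passage to the limit before optional stopping at $\eta_t\wedge T_A$; you must also pin down the local-time normalization so that the compensator is exactly $e^{\lambda L_s}$ (no stray factor $\tfrac12$) and so that $B^{(1)}(\eta_t)$ with $\{\eta_t<T_A\}=\{t<\tau_D\}$ reproduces precisely $P^D_t$, which is the content of the identification used in Corollary~\ref{cor:Brownian}. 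The optional-stopping and boundary-contribution parts of your sketch are fine (the stopped process is bounded by $e^{\lambda t}\norm{u}_\infty$ and $u$ vanishes on the dock), so the plan is repairable, but as written the crucial lemma is outsourced; by contrast the paper closes exactly this point with the explicit estimates \eqref{eq:hl:genpsi:dom1}--\eqref{eq:hl:genpsi:dom2} and the derivative-in-$t$ argument.
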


\begin{proof}
With the notation of Theorem~\ref{th:halfspace}, we have $\psi_\lambda(x) = u(x, 0)$; we extend $\psi_\lambda$ to be $0$ on $(-\infty, 0]$. Since $u$ is harmonic and bounded in the upper half-plane, we have $P_y \psi_\lambda(x) = u(x, y)$ ($y > 0$, $x \in \R$). Since $u$ satisfies~\eqref{eq:intro:2}, for all $x > 0$, $\frac{1}{y} (P_y \psi_\lambda(x) - \psi_\lambda(x))$ converges to $-\lambda \psi_\lambda(x)$ as $y \searrow 0$. We will now prove (formula~\eqref{eq:hl:genpsi:dom1}) that this convergence is dominated by an appropriate function.

Below we assume that $\lambda > 0$, $x > 0$ and $0 < y < \frac{1}{\lambda}$. By formula~\eqref{eq:hl:u1}, we have
\formula[eq:hl:genpsi]{
\begin{split}
  \frac{P_y \psi_\lambda(x) - e^{-\lambda y} \psi_\lambda(x)}{y} & = \\
  & \hspace*{-15mm} - \frac{\sqrt{2}}{2 \pi} \int_0^\infty \frac{t \cos(t \lambda y) - \sin(t \lambda y) - e^{-\lambda y} t}{(1 + t^2) y} e^{-\eta(t)} e^{-t \lambda x} dt .
\end{split}
}
Since $|1 - \cos z| \le \frac{z^2}{2}$, $|z - \sin z| \le \frac{z^3}{3}$, $|1 - z - e^{-z}| \le \frac{z^2}{2}$ and $\lambda y < 1$, we have
\formula{
  \abs{t \cos(t \lambda y) - \sin(t \lambda y) - e^{-\lambda y} t} & \le \lambda^2 t \expr{\frac{t^2}{2} + \frac{t^2 \lambda y}{3} + \frac{1}{2}} y^2 < \lambda^2 t (1 + t^2) y^2 \, .
}
Using also $e^{-\eta(t)} \le e^{\mathcal{C} / \pi} (1 + t^2)^{-\frac{1}{4}}$ (see~\eqref{eq:aux:est}), and then $(1 + t^2) \ge t^2$, we obtain
\formula[eq:hl:genpsi2]{
\begin{split}
  \abs{\int_0^{\frac{1}{\lambda y}} \frac{t \cos(t \lambda y) - \sin(t \lambda y) - e^{-\lambda y} t}{(1 + t^2) y} e^{-\eta(t)} e^{-t \lambda x} dt} \hspace*{-70mm} & \\
  & \le e^{\frac{\mathcal{C}}{\pi}} \lambda^2 y \int_0^{\frac{1}{\lambda y}} \frac{t (1 + t^2)}{(1 + t^2)^{\frac{5}{4}}} \, e^{-t \lambda x} dt \le e^{\frac{\mathcal{C}}{\pi}} \lambda^2 y \int_0^{\frac{1}{\lambda y}} \sqrt{t} \, e^{-t \lambda x} dt .
\end{split}
}
Furthermore,
\formula[eq:hl:genpsi3]{
  \int_0^{\frac{1}{\lambda y}} \sqrt{t} \, e^{-t \lambda x} dt & \le (\lambda y)^{-\frac{3}{4}} \int_0^\infty t^{-\frac{1}{4}} \, e^{-t \lambda x} dt \le \frac{\Gamma(\frac{3}{4})}{(\lambda^2 x y)^{\frac{3}{4}}} \, .
}
In a similar manner, but using $|t \cos(t \lambda y) - \sin(t \lambda y) - e^{-\lambda y} t| \le t + t \lambda y + t \le 3 t$, we obtain
\formula[eq:hl:genpsi4]{
\begin{split}
  \abs{\int_{\frac{1}{\lambda y}}^\infty \frac{t \cos(t \lambda y) - \sin(t \lambda y) - e^{-\lambda y} t}{(1 + t^2) y} e^{-\eta(t)} e^{-t \lambda x} dt} \hspace*{-70mm} & \\
  & \le 3 e^{\frac{\mathcal{C}}{\pi}} \int_{\frac{1}{\lambda y}}^\infty \frac{t}{(1 + t^2)^{\frac{5}{4}} y} e^{-t \lambda x} dt \le \frac{3 e^{\frac{\mathcal{C}}{\pi}}}{y} \int_{\frac{1}{\lambda y}}^\infty t^{-\frac{3}{2}} e^{-t \lambda x} dt ,
\end{split}
}
and
\formula[eq:hl:genpsi5]{
  \int_{\frac{1}{\lambda y}}^\infty t^{-\frac{3}{2}} e^{-t \lambda x} dt & \le (\lambda y)^{\frac{5}{4}} \int_0^\infty t^{-\frac{1}{4}} e^{-t \lambda x} dt = \frac{\Gamma(\frac{3}{4}) (\lambda y)^{\frac{5}{4}}}{(\lambda x)^{\frac{3}{4}}} \, .
}
Formulas~\eqref{eq:hl:genpsi}--\eqref{eq:hl:genpsi5} yield, after simplification, that
\formula[eq:hl:genpsi:dom1]{
\begin{split}
  \abs{\frac{P_y \psi_\lambda(x) - e^{-\lambda y} \psi_\lambda(x)}{y}} & \le \frac{2 \sqrt{2} e^{\frac{\mathcal{C}}{\pi}} \Gamma(\frac{3}{4}) \lambda^{\frac{1}{2}} y^{\frac{1}{4}}}{\pi x^{\frac{3}{4}}} = \frac{c_1(\lambda) y^{\frac{1}{4}}}{x^{\frac{3}{4}}}
\end{split}
}
with some constant $c_1(\lambda)$.

We are now going to replace $P_y$ by $P^D_y$ in~\eqref{eq:hl:genpsi:dom1}. In Section~\ref{sec:err} it is proved (using only the definition~\eqref{eq:hl:psi} and~\eqref{eq:hl:r} of $\psi_\lambda$) that $|\psi_\lambda(x)| = |\psi_1(\lambda x)| \le 2 \sqrt{\lambda x}$, see~\eqref{eq:err:psisqrt}. This and~\eqref{eq:C:pDt} yield that for $0 < y < x$ we have
\formula[eq:hl:genpsi6]{
\begin{split}
  \abs{\frac{P_y \psi_\lambda(x) - P^D_y \psi_\lambda(x)}{y}} & \le \int_0^\infty \frac{p_y(z - x) - p^D_y(z - x)}{y} \, |\psi_\lambda(z)| dz \\
  & \le \frac{2 \sqrt{\lambda}}{\pi} \int_0^\infty \min \expr{\frac{1}{x^2}, \frac{y}{x^2 z}, \frac{y}{x z^2}} \sqrt{z} \, dz \\
  & = \frac{8 \sqrt{\lambda} \, y (3 \sqrt{x} - \sqrt{y})}{3 \pi x^2} \le \frac{8 \sqrt{\lambda} y}{\pi x^{\frac{3}{2}}} \le \frac{8 \sqrt{\lambda} y^{\frac{1}{4}}}{\pi x^{\frac{3}{4}}} \, .
\end{split}
}
When $0 < x < y$, in a similar manner
\formula[eq:hl:genpsi7]{
\begin{split}
  \abs{\frac{P_y \psi_\lambda(x) - P^D_y \psi_\lambda(x)}{y}} & \le \frac{2 \sqrt{\lambda}}{\pi} \int_0^\infty \min \expr{\frac{1}{y^2}, \frac{1}{z^2}} \sqrt{z} \, dz \\
  & = \frac{16 \sqrt{\lambda}}{3 \pi \sqrt{y}} \le \frac{16 \sqrt{\lambda} y^{\frac{1}{4}}}{3 \pi x^{\frac{3}{4}}} \, .
\end{split}
}
Finally, by~\eqref{eq:hl:genpsi:dom1}, \eqref{eq:hl:genpsi6} and~\eqref{eq:hl:genpsi7}, there is a constant $c_2(\lambda)$ such that
\formula[eq:hl:genpsi:dom2]{
  \abs{\frac{P^D_y \psi_\lambda(x) - e^{-\lambda y} \psi_\lambda(x)}{y}} & \le \frac{c_2(\lambda) y^{\frac{1}{4}}}{x^{\frac{3}{4}}} \, .
}
For any fixed $x > 0$ and $t > 0$, the one-sided derivative of $e^{\lambda t} P^D_t \psi_\lambda(x)$ with respect to $t$ equals
\formula{
  \frac{\partial}{\partial t_+} \expr{e^{\lambda t} P^D_t \psi_\lambda(x)} & = \lim_{y \searrow 0} \frac{e^{\lambda (t + y)} P^D_{t + y} \psi_\lambda(x) - e^{\lambda t} P^D_t \psi_\lambda(x)}{y} \\
  & = \lim_{y \searrow 0} e^{\lambda (t + y)} \int_0^\infty p^D_t(x, z) \, \frac{P^D_y \psi_\lambda(z) - e^{-\lambda y} \psi_\lambda(z)}{y} \, dz .
}
Since $p^D_t(x, z) \le p_t(z - x) \le \frac{1}{t}$, by~\eqref{eq:hl:genpsi:dom2} we have
\formula{
  & \abs{\int_0^\infty p^D_t(x, z) \, \frac{P^D_y \psi_\lambda(z) - e^{-\lambda y} \psi_\lambda(z)}{y} \, dz} \le \int_0^\infty \frac{c_2(\lambda) y^{\frac{1}{4}}}{z^{\frac{3}{4}}} \, p^D_t(x, z) dz \\
  & \hspace{20mm} \le c_2(\lambda) y^{\frac{1}{4}} \expr{\int_1^\infty p^D_t(x, z) dz + \frac{1}{t} \int_0^1 \frac{1}{z^{\frac{3}{4}}} \, dz} \le \expr{1 + \frac{4}{t}} c_2(\lambda) y^{\frac{1}{4}} .
}
The right hand side tends to zero as $y \searrow 0$, so that
\formula{
  \frac{\partial}{\partial t_+} \expr{e^{\lambda t} P^D_t \psi_\lambda(x)} & = 0
}
for all $t > 0$ and $x > 0$. By~\eqref{eq:hl:genpsi:dom2} (with both sides multiplied by $e^{\lambda y}$), this also holds for $t = 0$. Finally, the function $e^{\lambda t} P^D_t \psi_\lambda(x)$ is continuous with respect to $t$ for each $x > 0$ (this follows from weak continuity of $p^D_t(x, z) dz$ with respect to $t$, which is a consequence of stochastic continuity of the killed Cauchy process; one can also prove this using the explicit formula for $p_t$ and~\eqref{eq:C:pDt}). It follows that $e^{\lambda t} P^D_t \psi_\lambda(x)$ is constant in $t \ge 0$, and since $P^D_0 \psi_\lambda(x) = \psi_\lambda(x)$, this completes the proof.
\end{proof}

\begin{remark}
Since $r(x) > 0$, the functions $\psi_\lambda$ are clearly not in $L^2(D)$, so the above result does not provide any information about the $L^2(D)$ properties of the operators $P^D_t$. This problem is studied in Section~\ref{sec:spec}.
\end{remark}

\begin{remark}
Also the derivatives $\psi_\lambda'$ are locally integrable eigenfunctions of $P^D_t$, continuous in $D$, but not at $0$. As this is not used in the sequel, we omit the proof.
\end{remark}

\begin{remark}
\label{rem:dilogarithm}
The functions $\psi_\lambda$ can be effectively computed by numerical integration. Indeed,~\eqref{eq:aux:int1},~\eqref{eq:aux:int2} and the identity
\formula{
  \int_0^t \frac{\log u}{1 + u^2} du & = \arctan t \log t + \frac{i}{2} (\mathrm{Li}_2(i t) - \mathrm{Li}_2(-i t)) ,
}
where $\mathrm{Li}_2(z)$ is the dilogarithm function, yield that
\formula{
  \psi_\lambda(x) & = \sin(\lambda x + \sfrac{\pi}{8}) - \frac{\sqrt{2}}{2 \pi} \int_0^\infty \frac{t^{1 + \frac{\arctan t}{\pi}}}{(1 + t^2)^{\frac{5}{4}}} \exp\expr{\frac{i}{2\pi} (\mathrm{Li}_2(i t) - \mathrm{Li}_2(-i t))} e^{-t \lambda x} dt .
}
\end{remark}

%
%

\section{Properties of the function $B$}
\label{sec:B}

\noindent
In this section we study the properties of the function $B$. As an interesting corollary, the Laplace transform of the eigenfunctions $\psi_\lambda$ is computed.

The function $B$ defined by~\eqref{eq:hl:h2} extends to a holomorphic function on $\C \setminus (-\infty, 0]$, satisfying $B(\bar{z}) = \overline{B(z)}$ (see also Appendix~B). Therefore $B$ is defined on whole $\C$, it is holomorphic in $\C \setminus (-\infty, 0]$ with a branch cut on $(-\infty, 0]$, and it is continuous on $\cl{\C}_+$. The following properties of $B$ will play an important role.

When $\imag z > 0$, we have
\formula{
  B(z) + B(-z) & = \frac{1}{\pi} \int_{-\infty}^0 \frac{\log(z - s)}{1 + s^2} \, ds + \frac{1}{\pi} \int_{-\infty}^0 \frac{\log(-z - s)}{1 + s^2} \, ds \\
  & = \frac{1}{\pi} \int_0^\infty \frac{\log(z + s)}{1 + s^2} \, ds + \frac{1}{\pi} \int_{-\infty}^0 \frac{\log(z + s) - i \pi}{1 + s^2} \, ds \\
  & = \frac{1}{\pi} \int_{-\infty}^\infty \frac{\log(z - s)}{1 + s^2} \, ds - \frac{i \pi}{2} \, .
}
On the right-hand side, the function $s \mapsto \log(z - s)$, holomorphic (and therefore harmonic) in $\C_-$, is integrated against the Poisson kernel of the lower half-plane $p_1(s) = \frac{1}{\pi} \frac{1}{1 + s^2}$. The result is the value of $\log(z - s)$ at $s = -i$. It follows that
\formula{
  B(z) + B(-z) & = \log(z - (-i)) - \frac{i \pi}{2} = \log(1 - i z) .
}
By $B(\bar{z}) = \cl{B(z)}$ we get $B(z) + B(-z) = \log(1 + i z)$ whenever $\imag z < 0$, and so
\formula[eq:hl:Bsym]{
  e^{B(z)} & = (1 - i z \sigma(z)) e^{-B(-z)} ,
}
where $\sigma(z) = 1$ when $\imag z > 0$ and $\sigma(z) = -1$ when $\imag z < 0$. A similar relation for $\eta$ was used earlier in~\eqref{eq:hl:F:1}, see also~\eqref{eq:aux:etasym}. By continuity of $B(z)$ in $\overline{\C}_+$ the formula~\eqref{eq:hl:Bsym} is also valid for $z \in \R$ if we let $\sigma(z) = 1$ for $z < 0$ and $\sigma(z) = -1$ for $z > 0$. For completeness, we let $\sigma(0) = 0$.

By~\eqref{eq:hl:F:1a}, \eqref{eq:hl:r}, the relation between $F$, $\psi_\lambda$ and $r_\lambda$, and using $B(t) = \eta(t) + i \arctan t_-$,
\formula[eq:hl:r1]{
  r(x) & = \frac{\sqrt{2}}{2 \pi} \int_0^\infty \tau(t) e^{-t x} dt , && \text{where} & \tau(t) & = \imag \frac{e^{B(-t)}}{1 + t^2} \, .
}
Note that by the definition, $\tau(t) = 0$ for $t \le 0$. In the sequel, we need the Hilbert transform of $\tau$, which can be computed as follows. The function $e^{B(z)} / (1 + z^2)$ is meromorphic in the upper half-plane with a simple pole at $i$, so that the function
\formula{
  \frac{e^{B(z)}}{1 + z^2} - \frac{1}{2} \frac{e^{B(i)}}{1 + i z} - \frac{1}{2} \frac{e^{\overline{B(i)}}}{1 - i z}
}
is holomorphic in $\C_+$. In fact it is in $H^p(\C_+)$ for $p \in (1, \infty)$, see~\eqref{eq:aux:Best}. Its boundary limit on $\R$ is equal to
\formula{
   \frac{e^{B(t)}}{1 + t^2} - \frac{1}{2} \frac{e^{B(i)}}{1 + i t} - \frac{1}{2} \frac{e^{\overline{B(i)}}}{1 - i t} & = \frac{e^{B(t)} - \sqrt{2} \cos \frac{\pi}{8} - t \sqrt{2} \sin \frac{\pi}{8}}{1 + t^2} ,
}
and the imaginary part of this function is just $\tau(-t)$. Therefore, the Hilbert transform of $\tau(-t)$ is the negative of the real part of the above function. It follows by~\eqref{eq:pre:hilbert:sym} that for $t \in \R$,
\formula[eq:hl:Htau]{
  H \tau(t) & = \real \frac{e^{B(-t)} - \sqrt{2} \cos \frac{\pi}{8} + t \sqrt{2} \sin \frac{\pi}{8}}{1 + t^2} \, .
}

We are now able to compute the Laplace transform $\lap \psi_\lambda$ of $\psi_\lambda$. By a direct computation, we have
\formula[eq:hl:L1]{
  \int_0^\infty \sin(x + \sfrac{\pi}{8}) e^{-t x} dx & = \frac{\cos \frac{\pi}{8} + t \sin \frac{\pi}{8}}{1 + t^2} \, , && t > 0 .
}
On the other hand, by Fubini's theorem and~\eqref{eq:hl:r1},
\formula{
  \int_0^\infty r(x) e^{-t x} dx & = \frac{\sqrt{2}}{2 \pi} \int_0^\infty \frac{\tau(s)}{t + s} \, ds = -\frac{\sqrt{2}}{2} H \tau(-t) , && t \ge 0 .
}
By~\eqref{eq:hl:Htau} we have
\formula[eq:hl:L2]{
  \int_0^\infty r(x) e^{-t x} dx & = - \frac{\sqrt{2}}{2} \frac{e^{B(t)}}{1 + t^2} + \frac{\cos \frac{\pi}{8} + t \sin \frac{\pi}{8}}{1 + t^2} \, , && t \ge 0 .
}
In particular,
\formula[eq:hl:Lr0]{
  \int_0^\infty r(x) dx & = \cos \frac{\pi}{8} - \frac{\sqrt{2}}{2} .
}
Formulas~\eqref{eq:hl:L1} and~\eqref{eq:hl:L2} give
\formula{
  \lap \psi_1(t) & = \int_0^\infty \psi_1(x) e^{-t x} dx = \frac{\sqrt{2}}{2} \frac{e^{B(t)}}{1 + t^2} \, , & t > 0 .
}
By scaling and the uniqueness of the holomorphic continuation, we obtain the following result.

\begin{corollary}
The Laplace transform of $\psi_\lambda$ is equal to
\formula[eq:hl:L]{
  \lap \psi_\lambda(z) & = \int_0^\infty \psi_\lambda(x) e^{-z x} dx = \frac{\sqrt{2}}{2} \frac{\lambda e^{B(\frac{z}{\lambda})}}{\lambda^2 + z^2} \, , & \real z > 0 ,
}
where $B(z)$ is given by~\eqref{eq:hl:h2}.
\end{corollary}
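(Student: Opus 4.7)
The plan is to chain together the identities derived immediately above the corollary and then promote the resulting real-variable formula to the full half-plane $\{\real z > 0\}$ by analytic continuation. Essentially all of the computational work is already done; what remains is bookkeeping.

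First, at $\lambda = 1$ and real $t > 0$, I would subtract the Laplace transform of $r$ in~\eqref{eq:hl:L2} from that of $\sin(x + \sfrac{\pi}{8})$ in~\eqref{eq:hl:L1}, using $\psi_1(x) = \sin(x + \sfrac{\pi}{8}) - r(x)$. The two ``$\frac{\cos \frac{\pi}{8} + t \sin \frac{\pi}{8}}{1 + t^2}$'' contributions cancel, leaving exactly $\frac{\sqrt{2}}{2}\, \frac{e^{B(t)}}{1 + t^2}$, which is the claimed identity at $\lambda = 1$ for positive real arguments.

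Second, to cover general $\lambda > 0$ I would invoke the scaling $\psi_\lambda(x) = \psi_1(\lambda x)$, clear from~\eqref{eq:hl:psi} and~\eqref{eq:hl:r}. A change of variables $u = \lambda x$ in the Laplace integral gives $\lap \psi_\lambda(t) = \lambda^{-1} \lap \psi_1(t/\lambda)$, and substituting the identity from the previous step yields the stated formula for real $t > 0$.

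Third, I would analytically continue to $\real z > 0$. The left-hand side is holomorphic there because $\psi_\lambda$ is bounded on $(0, \infty)$: the oscillatory term is obviously bounded, and $r_\lambda$, being a Laplace transform of the nonnegative integrable function $\frac{\sqrt{2}}{2\pi}\, \tau$, is bounded by $r_\lambda(0) = \cos \frac{\pi}{8} - \frac{\sqrt{2}}{2}$ via~\eqref{eq:hl:Lr0}. The right-hand side is holomorphic on $\real z > 0$ because, as recalled at the start of Section~\ref{sec:B}, $B$ extends holomorphically to $\C \setminus (-\infty, 0]$, the point $z/\lambda$ stays in this domain when $\real z > 0$, and the zeros $\pm i \lambda$ of $\lambda^2 + z^2$ lie on the imaginary axis. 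Since the two holomorphic functions agree on the ray $(0, \infty)$, which has accumulation points in the connected open set $\{\real z > 0\}$, the identity theorem promotes the equality to the whole half-plane. No step presents a substantive obstacle; the only mildly subtle ingredient is the holomorphic extension of $B$ used on the right-hand side, but this is exactly what was set up in the opening paragraphs of Section~\ref{sec:B}.
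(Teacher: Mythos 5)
Your proposal is correct and follows essentially the same route as the paper, which obtains the $\lambda = 1$, real-$t$ identity by subtracting \eqref{eq:hl:L2} from \eqref{eq:hl:L1} and then simply invokes scaling and the uniqueness of holomorphic continuation; you are just spelling out those last two steps. One small slip: $r_\lambda(0) = r(0) = \sin\frac{\pi}{8}$ by \eqref{eq:err:r2}, whereas \eqref{eq:hl:Lr0} gives $\int_0^\infty r(x)\,dx = \cos\frac{\pi}{8} - \frac{\sqrt{2}}{2}$; since $r$ is in any case bounded by $r(0)$, the boundedness of $\psi_\lambda$ and hence the holomorphy of $\lap \psi_\lambda$ on $\left\{ \real z > 0 \right\}$ are unaffected.
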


%
%

\section{The remainder term}
\label{sec:err}

\noindent
This section is devoted to a detailed analysis of the remainder term $r_\lambda$, see~\eqref{eq:hl:r}. Recall that $r_\lambda(x) = r(\lambda x)$, where
\formula[eq:err:r]{
  r(x) & = \frac{\sqrt{2}}{2 \pi} \int_0^\infty \frac{t}{(1 + t^2)^{\frac{5}{4}}} \exp\expr{\frac{1}{\pi} \int_0^t \frac{\log s}{1 + s^2} ds} e^{-t x} dt .
}
Since $r$ is the Laplace transform of a positive function, it is totally monotone, i.e. all functions $(-1)^n r^{(n)}$ are nonnegative and monotonically decreasing. In most of our estimates we simply use the inequality $-\mathcal{C} \le \int_0^t \frac{\log s}{1 + s^2} ds \le 0$ for $t > 0$ and formula~\eqref{eq:aux:beta} from Appendix~B. The $L^1(\R)$ norm of $r$, however, we already calculated in~\eqref{eq:hl:Lr0},
\formula[eq:err:rint]{
  \int_0^\infty r(x) dx & = \cos \frac{\pi}{8} - \frac{\sqrt{2}}{2} \in (0.216, 0.217) .
}
Since $\frac{1}{t+s} \le \frac{1}{2 \sqrt{t s}}$, by Fubini's theorem,
\formula[eq:err:rnorm]{
\begin{split}
  \int_0^\infty (r(x))^2 dx & \le \frac{1}{2 \pi^2} \int_0^\infty \int_0^\infty \frac{t}{(1 + t^2)^{\frac{5}{4}}} \frac{s}{(1 + s^2)^{\frac{5}{4}}} \frac{1}{t + s} dt ds \\
  & \le \frac{1}{4 \pi^2} \expr{\int_0^\infty \frac{\sqrt{t}}{(1 + t^2)^{\frac{5}{4}}} dt}^2 = \frac{(\Gamma(\frac{3}{4}))^2}{\pi (\Gamma(\frac{1}{4}))^2} < 0.037 .
\end{split}
}
In a similar manner, $\frac{1}{t+s} \ge \frac{1}{\sqrt{1 + t^2} \sqrt{1 + s^2}}$, so that
\formula[eq:err:rnorml]{
\begin{split}
  \int_0^\infty (r(x))^2 dx & \ge \frac{e^{-\frac{2 \mathcal{C}}{\pi}}}{2 \pi^2} \expr{\int_0^\infty \frac{t}{(1 + t^2)^{\frac{7}{4}}} dt}^2 = \frac{2 e^{-\frac{2 \mathcal{C}}{\pi}}}{9 \pi^2} > 0.012 .
\end{split}
}
For $x > 0$, we have
\formula[eq:err:r1]{
\begin{split}
  r(x) & = \frac{\sqrt{2}}{2 \pi} \int_0^\infty \frac{t}{(1 + t^2)^{\frac{5}{4}}} \exp\expr{\frac{1}{\pi} \int_0^t \frac{\log s}{1 + s^2} ds} e^{-t x} dt \\
  & \le \frac{\sqrt{2}}{2 \pi} \int_0^\infty t e^{-t x} dt \le \frac{\sqrt{2}}{2 \pi x^2} \, .
\end{split}
}
In a similar manner,
\formula[eq:err:rn1]{
  (-1)^n r^{(n)}(x) & \le \frac{\sqrt{2}}{2 \pi} \frac{(n+1)!}{x^{n+2}} \, .
}
Also,
\formula[eq:err:r2]{
  r(x) & \le r(0) = \sin \sfrac{\pi}{8} = \frac{\sqrt{2 - \sqrt{2}}}{2} < 0.383 ,
}
and
\formula[eq:err:rn2]{
  -r'(x) & \le \frac{\sqrt{2}}{2 \pi} \int_0^\infty \frac{e^{-t x}}{\sqrt{t}} dt = \frac{1}{\sqrt{2 \pi x}} \, .
}
It follows that for $x > 0$,
\formula{
  |\psi_1(x)| & \le \abs{\sin(x + \sfrac{\pi}{8}) - \sin \sfrac{\pi}{8}} + \abs{r(x) - r(0)} \\
  & \le x + \int_0^x |r'(y)| dy \le x + \sqrt{\frac{2 x}{\pi}} \, .
}
Since clearly $|\psi_1(x)| \le |\sin(x + \frac{\pi}{8})| + |r(x)| \le 2$, we have
\formula[eq:err:psisqrt]{
  |\psi_1(x)| & \le \min \expr{x + \sqrt{\frac{2 x}{\pi}}, 2} \le \min \expr{2 \sqrt{x}, 2} .
}
This property was already used in Section~\ref{sec:hl} in the proof of Theorem~\ref{th:halfline}.

Finally, since $r' < 0$, the first zero of $\psi_\lambda'$ is greater than $\frac{\pi}{8 \lambda}$. It follows that
\formula[eq:err:psi]{
  \norm{\psi_\lambda}_\infty & \le 1 + r_\lambda(\sfrac{\pi}{8 \lambda}) = 1 + r(\sfrac{\pi}{8}) \le 1 + \frac{\sqrt{2}}{2 \pi} \int_0^\infty \frac{t e^{-\frac{\pi}{8} t}}{(1 + t^2)^{\frac{5}{4}}} \, dt < 1.14 ;
}
for the last inequality, integrate by parts the left hand side of formula~3.387(7) in~\cite{bib:gr07}. The estimate~\eqref{eq:err:psi} is only used in Corollary~\ref{cor:bounded}, where a weaker version of~\eqref{eq:err:psi} would only result in a larger constant in~\eqref{eq:phi:five}. In fact, for the present constant $3$, we only need that $\norm{\psi_\lambda} \le 1.19$, which is easily obtained by estimating $(1 + t^2)^{-5/4}$ in the integrand in~\eqref{eq:err:psi} by a constant on each of the intervals $[\frac{k}{2}, \frac{k+1}{2}]$ with $k = 0, 1, ..., 7$, and $[4, \infty)$.

%
%

\section{Spectral representation of the transition semigroup for the half-line}
\label{sec:spec}

\noindent
Let $D = (0, \infty)$. In this section we study the $L^2(D)$ properties of the operators $P^D_t$. For $f \in C_c(D)$, define
\formula[eq:spec:pi]{
  \Pi f(x) & = \int_0^\infty f(\lambda) \psi_\lambda(x) d\lambda , && x \in D ,
}
where $\psi_\lambda = \sin(\lambda x + \frac{\pi}{8}) - r_\lambda(x)$ is given by~\eqref{eq:hl:psi} and~\eqref{eq:hl:r}. Note that
\formula{
  F_1(x) & = \int_0^\infty f(\lambda) \sin(\lambda x + \sfrac{\pi}{8}) d\lambda , && x \in D ,
}
satisfies $\norm{F_1}_2 \le c_1 \norm{f}_2$. Also, for
\formula{
  F_2(x) & = \int_0^\infty f(\lambda) r_\lambda(x) d\lambda , && x \in D ,
}
we may apply~\eqref{eq:err:r1} and~\eqref{eq:err:r2} to obtain
\formula{
  \int_0^\infty (F_2(x))^2 dx & \le \int_0^\infty \int_0^\infty \int_0^\infty |f(\mu)| |f(\lambda)| r(\mu x) r(\lambda x) d\mu d\lambda dx \\
  & \le \int_0^\infty \int_0^\infty \expr{\int_0^\infty \frac{c_2}{(1 + \mu x)^2 (1 + \lambda x)^2} dx} |f(\mu)| |f(\lambda)| d\mu d\lambda \\
  & \le \int_0^\infty \int_0^\infty \expr{\int_0^\infty \frac{c_2}{(1 + (\mu + \lambda) x)^2} dx} |f(\mu)| |f(\lambda)| d\mu d\lambda \\
  & = c_2 \int_0^\infty \int_0^\infty \frac{|f(\mu)| |f(\lambda)|}{\mu + \lambda} d\mu d\lambda ,
}
which is bounded by $c_2 \pi \norm{f}_2$ by Hardy-Hilbert's inequality. It follows that $\norm{\Pi f}_2 = \norm{F_1 + F_2}_2 \le c_3 \norm{f}_2$, and therefore $\Pi$ can be continuously extended to a unique bounded linear operator on $L^2(D)$.

For $f \in C_c(D)$, $p^D_t(x, y) f(\lambda) \psi_\lambda(y)$ is integrable in $(y, \lambda) \in D \times D$, so that by Theorem~\ref{th:halfline},
\formula[eq:spec:pt]{
  P^D_t \Pi f(x) & = \int_0^\infty e^{-\lambda t} f(\lambda) \psi_\lambda(x) d\lambda , && x \in D .
}
Let $f, g \in C_c(D)$ and define $f_k(\lambda) = e^{-k \lambda t} f(\lambda)$, $g_k(\lambda) = e^{-k \lambda t} g(\lambda)$. From~\eqref{eq:spec:pt} it follows that $P^D_t \Pi f_k = \Pi f_{k+1}$ and $P^D_t \Pi g_k = \Pi g_{k+1}$. Since the operators $P^D_t$ are self-adjoint, we have
\formula{
  \int_0^\infty \Pi f(x) \Pi g(x) dx & = \int_0^\infty P^D_t \Pi f_{-1}(x) \Pi g(x) dx \\
  & = \int_0^\infty \Pi f_{-1}(x) P^D_t \Pi g(x) dx = \int_0^\infty \Pi f_{-1}(x) \Pi g_1(x) dx .
}
By induction,
\formula{
  \int_0^\infty \Pi f(x) \Pi g(x) dx & = \int_0^\infty \Pi f_{-k}(x) \Pi g_k(x) dx .
}
Suppose that $\supp f \sub (0, \lambda_0)$ and $\supp g \sub (\lambda_0, \infty)$. Then we have
\formula{
  \int_0^\infty \Pi f(x) \Pi g(x) dx & = \int_0^\infty \Pi (e^{-k \lambda_0 t} f_{-k})(x) \Pi (e^{k \lambda_0 t} g_k)(x) dx .
}
Both $e^{-k \lambda_0 t} f_{-k}$ and $e^{k \lambda_0 t} g_k$ tend to zero uniformly as $k \rightarrow \infty$, and so $\Pi (e^{-k \lambda_0 t} f_{-k})$ and $\Pi (e^{k \lambda_0 t} g_k)$ converge to zero in $L^2(D)$. We conclude that $\Pi f$ and $\Pi g$ are orthogonal in $L^2(D)$. By an approximation argument, this is true for any $f, g \in L^2(D)$, provided that $f(\lambda) = 0$ for $\lambda \ge \lambda_0$ and $g(\lambda) = 0$ for $\lambda \le \lambda_0$.

Define
\formula{
  \mu(A) & = \int_0^\infty (\Pi \ind_A(x))^2 dx , && A \sub D .
}
Clearly
\formula{
  \mu(A) & \le c_3 \norm{\ind_A}_2^2 = c_3 |A| , && A \sub D.
}
Whenever $A \sub (0, \lambda_0)$ and $B \sub (\lambda_0, \infty)$, we have
\formula{
  \mu(A \cup B) & = \int_0^\infty (\Pi \ind_A(x))^2 dx + \int_0^\infty (\Pi \ind_B(x))^2 dx + 2 \int_0^\infty \Pi \ind_A(x) \Pi \ind_B(x) dx \\
  & = \mu(A) + \mu(B) .
}
Finally, when $A = \bigcup_{n = 1}^\infty A_n$, where $A_1 \sub A_2 \sub ...$ and $|A| < \infty$, the sequence $\ind_{A_n}$ converges in $L^2(D)$ to $\ind_A$ as $n \rightarrow \infty$. Hence $\Pi \ind_{A_n}$ converges to $\Pi \ind_A$ in $L^2(D)$, and so $\mu(A) = \lim_{n \rightarrow \infty} \mu(A_n)$. It follows that $\mu$ is an absolutely continuous measure on $(0, \infty)$. By an approximation argument, we have
\formula{
  \int_0^\infty \Pi f(x) \Pi g(x) dx & = \int_0^\infty f(\lambda) g(\lambda) \mu(d\lambda)
}
for any $f, g \in L^2(D)$.

Note that $\psi_\lambda(q x) = \psi_{\lambda/q}(x)$, and therefore $\Pi f_q(x) = q \Pi f(q x)$, where $f_q(x) = f(\frac{x}{q})$. It follows that $\mu(q A) = q \mu(A)$ and so $\mu$ must be a multiple of the Lebesgue measure on $(0, \infty)$, say $\mu(A) = c_4 |A|$. This result is a version of Plancherel's theorem, where Fourier transform is replaced by $\Pi$:
\formula{
  \int_0^\infty \Pi f(x) \Pi g(x) dx & = c_4 \int_0^\infty f(\lambda) g(\lambda) d\lambda
}
for any $f, g \in L^2(D)$.

The constant $c_4$ can be determined by considering $f(\lambda) = \frac{1}{\sqrt{q}} \ind_{[1, 1+q]}(\lambda)$, $q > 0$. We then have $\norm{f}_2 = 1$. On the other hand,
\formula{
  \Pi f(x) & = \frac{1}{x \sqrt{q}} \expr{\cos(x + \sfrac{\pi}{8}) - \cos((1+q) x + \sfrac{\pi}{8})} - \frac{1}{\sqrt{q}} \int_1^{1+q} r(\lambda x) d\lambda .
}
The $L^2(D)$ norm of the first summand converges to $\sqrt{\frac{\pi}{2}}$ as $q \searrow 0$, just as in the case of the Fourier sine transform. The second summand is bounded by $\sqrt{q} r(x)$ and so it converges to zero in $L^2(D)$. It follows that $c_4 = \frac{\pi}{2}$. The Plancherel's theorem can be therefore written as
\formula[eq:spec:plancherel]{
  \int_0^\infty \Pi f(x) \Pi g(x) dx & = \frac{\pi}{2} \int_0^\infty f(\lambda) g(\lambda) d\lambda
}
In particular, $\sqrt{\frac{2}{\pi}} \, \Pi$ is an isometry on $L^2(D)$. Since $\psi_\lambda(x) = \psi_x(\lambda)$, for $f, g \in C_c(D)$ (and therefore for any $f, g \in L^2(D)$) we also have
\formula{
  \int_0^\infty \Pi f(x) g(x) dx & = \int_0^\infty f(\lambda) \Pi g(\lambda) d\lambda ,
}
which combined with~\eqref{eq:spec:plancherel} yields that $\Pi^2 f = \frac{\pi}{2} f$. We collect the above results in the following theorem.

\begin{theorem}
\label{th:spec}
The operator $\sqrt{\frac{2}{\pi}} \, \Pi : L^2(D) \rightarrow L^2(D)$ gives a spectral representation of $\A_D$ and the semigroup $(P^D_t)$, acting on $L^2(D)$, where $D = (0, \infty)$; that is, for any $f \in L^2(D)$,
\begin{enumerate}
\item[(a)] $\norm{f}_2 = \sqrt{\frac{2}{\pi}} \norm{\Pi f}$ (Plancherel's theorem);
\item[(b)] $\Pi P^D_t f(\lambda) = e^{-\lambda t} \Pi f(\lambda)$;
\item[(c)] $f$ is in the domain of $\A_D$ if and only if $\lambda \Pi f(\lambda)$ is square integrable;
\item[(d)] $\Pi \A_D f(\lambda) = -\lambda \Pi f(\lambda)$.
\end{enumerate}
Furthermore, $\Pi^2 = \frac{\pi}{2} \id$ (inversion formula).
\end{theorem}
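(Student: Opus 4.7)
The theorem is essentially a summary of what has been carried out in the discussion preceding its statement, together with the standard functional-analytic consequences of having a unitary that intertwines $(P^D_t)$ with a multiplication semigroup. I would organize the proof in three stages.

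First, I would invoke the Plancherel identity~\eqref{eq:spec:plancherel}, which has already been established: boundedness of $\Pi$ on $L^2(D)$ follows from the $L^2$ bound on the sine-transform part together with the Hardy--Hilbert estimate on the $r_\lambda$ part; orthogonality of $\Pi f$ and $\Pi g$ for disjointly supported $f,g$ follows from self-adjointness of $P^D_t$ and the iteration argument with $f_k(\lambda) = e^{-k\lambda t}f(\lambda)$; and the scaling $\psi_\lambda(qx) = \psi_{\lambda/q}(x)$ forces the induced measure $\mu$ to be a multiple of Lebesgue measure, with the constant $c_4 = \pi/2$ pinned down by testing against $f = q^{-1/2}\ind_{[1,1+q]}$. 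This yields (a). The inversion formula $\Pi^2 = \frac{\pi}{2}\id$ then follows from the symmetry $\psi_\lambda(x) = \psi_x(\lambda)$ (immediate from~\eqref{eq:hl:psi}--\eqref{eq:hl:r}) combined with Fubini, which gives $\langle \Pi f, g\rangle = \langle f, \Pi g\rangle$ for $f,g \in C_c(D)$, hence by density $\Pi$ is self-adjoint, and comparison with (a) through polarization produces $\Pi^2 = \frac{\pi}{2}\id$.

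Next, for (b), I would start on $C_c(D)$, where the integrand $p^D_t(x,y) f(\lambda)\psi_\lambda(y)$ is absolutely integrable on $D\times D$ (since $f$ has compact support, $\psi_\lambda$ is bounded by~\eqref{eq:err:psi}, and $p^D_t(x,\cdot) \in L^1$). Fubini and Theorem~\ref{th:halfline} give
\formula{
  P^D_t \Pi f(x) & = \int_0^\infty f(\lambda) P^D_t \psi_\lambda(x)\, d\lambda = \int_0^\infty e^{-\lambda t} f(\lambda) \psi_\lambda(x) \, d\lambda = \Pi(e^{-\cdot\, t} f)(x) ,
}
which is (b) for $f \in C_c(D)$. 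Boundedness of $\Pi$ on $L^2(D)$ and the contraction property of $P^D_t$ on $L^2(D)$ allow passage to the limit in $f$, giving (b) for all $f \in L^2(D)$.

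Finally, (c) and (d) are the standard consequences of (a), (b) and the inversion formula. Writing $U = \sqrt{2/\pi}\,\Pi$, parts (a) and the inversion formula together say that $U$ is a self-adjoint unitary involution on $L^2(D)$, and (b) rewrites as $U P^D_t U^{-1} = M_{e^{-\lambda t}}$, the multiplication operator. Since $(M_{e^{-\lambda t}})_{t\ge 0}$ is a strongly continuous contraction semigroup on $L^2(D)$ whose generator is $M_{-\lambda}$ with natural domain $\{g \in L^2(D) : \lambda g \in L^2(D)\}$, unitary conjugation preserves generators and their domains, giving (c) and (d) directly.

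The main obstacle is really only organizational: the analytical heavy lifting (boundedness of $\Pi$, orthogonality under disjoint support, the scaling-based identification of $\mu$, and pinning down $c_4$) is already in place before the theorem. One place to be a touch careful is that the duality identity used for the inversion formula requires Fubini applied to $\int_0^\infty \int_0^\infty f(\lambda) g(x)\psi_\lambda(x)\, d\lambda\, dx$, which is justified for $f,g \in C_c(D)$ by boundedness of $\psi_\lambda$ on compacts, and extended by continuity of $\Pi$ on $L^2(D)$.
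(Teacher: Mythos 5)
Your proposal is correct and follows essentially the same route as the paper, whose proof of Theorem~\ref{th:spec} is precisely the accumulated discussion of Section~\ref{sec:spec} (boundedness of $\Pi$, orthogonality via the $e^{-k\lambda t}$ iteration, scaling identification of $\mu$ with $c_4=\frac{\pi}{2}$, symmetry $\psi_\lambda(x)=\psi_x(\lambda)$ for the inversion formula, and the identity~\eqref{eq:spec:pt} on $C_c(D)$). Your added spelling-out of (b)--(d) via density and unitary conjugation of generators is the standard completion the paper leaves implicit, and it is carried out correctly.
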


%
%

\section{Transition density for the half-line}
\label{sec:pdt}

\noindent
The aim of this section is to compute an explicit formula of the transition density $p^D_t(x, y)$ of the Cauchy process killed on exiting a half-line $D = (0,\infty)$, or the heat kernel for $\A_D$. Let us note that the transition density of the Brownian motion killed on exiting a half-line $(0,\infty)$ equals $\frac{1}{\sqrt{2 \pi t}} e^{-\frac{|x - y|^2}{2 t}} - \frac{1}{\sqrt{2 \pi t}} e^{-\frac{|x + y|^2}{2 t}}$, which follows from the reflection principle. For the Cauchy process we cannot use the reflection principle and the computation of $p^D_t(x, y)$ requires using much more complicated methods.

\begin{theorem}
\label{th:heat}
For $D = (0, \infty)$ and any $g \in L^p$, $p \in [1,\infty]$, we have
\formula[eq:spec:heat]{
  P^D_t g(x) & = \int_0^\infty p^D_t(x, y) g(y) dy, \quad \quad t,x > 0,
}
where
\formula[eq:spec:heatkernel]{
  p^D_t(x, y) & = \frac{1}{\pi} \frac{t}{t^2 + (x-y)^2} - \frac{1}{x y} \int_0^t \frac{f(\frac{s}{x}) f(\frac{t-s}{y})}{\frac{s}{x} + \frac{t-s}{y}} \, ds, \quad \quad t,x,y > 0,
}
and
\formula[eq:spec:deff]{
  f(s) & = \frac{1}{\pi} \frac{s}{1 + s^2} \exp\expr{\frac{1}{\pi} \int_0^\infty \frac{\log(s + w)}{1 + w^2} \, dw}, \quad \quad s > 0.
}
\end{theorem}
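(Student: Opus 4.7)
The plan is to derive the formula from the spectral decomposition established in Theorem~\ref{th:spec}. Since $\sqrt{2/\pi}\,\Pi$ is a self-inverse isometry diagonalizing $P^D_t$, the inversion formula gives $P^D_t g(x) = \frac{2}{\pi}\int_0^\infty e^{-\lambda t}\psi_\lambda(x)\,\Pi g(\lambda)\,d\lambda$ for $g\in C_c(D)$. Writing $\Pi g(\lambda) = \int_0^\infty g(y)\psi_\lambda(y)\,dy$ (using $\psi_\lambda(x)=\psi_x(\lambda)$) and applying Fubini---justified by the bound $|\psi_\lambda(u)|\le\min(2,2\sqrt{\lambda u})$ from~\eqref{eq:err:psisqrt} combined with the damping $e^{-\lambda t}$---yields the kernel representation
\formula{
  p^D_t(x,y) & = \frac{2}{\pi}\int_0^\infty e^{-\lambda t}\psi_\lambda(x)\psi_\lambda(y)\,d\lambda, \qquad t,x,y>0.
}
Formula~\eqref{eq:spec:heat} then extends to general $g\in L^p(D)$ by a density argument using the $L^p$-contractivity of $P^D_t$. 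The whole proof thus reduces to evaluating the above integral in closed form.

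Next I will expand $\psi_\lambda(u)=\sin(\lambda u+\sfrac{\pi}{8})-r(\lambda u)$ and treat the four resulting terms. The sine-sine part, handled via a product-to-sum identity and the elementary evaluation of $\frac{1}{\pi}\int_0^\infty e^{-\lambda t}\cos(\lambda a)\,d\lambda$, produces $p_t(y-x)$ together with an extra term of the form $-\frac{\sqrt 2}{2\pi}\frac{t-(x+y)}{t^2+(x+y)^2}$. For the three remaining terms I will use the Laplace representation $r(u)=\int_0^\infty\rho(s)e^{-su}\,ds$ with $\rho(s)=\frac{\sqrt 2}{2\pi}\frac{s\,e^{-B(s)}}{1+s^2}$, which follows from~\eqref{eq:hl:r} together with the identity $B(s)=\frac{1}{\pi}\int_0^\infty\frac{\log(s+w)}{1+w^2}\,dw$ for $s>0$ established in Section~\ref{sec:B}. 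Fubini and the elementary $\lambda$-integral $\int_0^\infty e^{-\lambda\sigma}\,d\lambda=1/\sigma$ then reduce each of the three pieces to an explicit integral of $\rho$ against a rational kernel in $t,x,y$; in particular the pure-$r$ piece becomes $\frac{2}{\pi}\int_0^\infty\int_0^\infty\frac{\rho(a)\rho(b)}{t+ax+by}\,da\,db$.

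The decisive last step is to recombine these pieces into the claimed convolution. For this I will use contour integration built around the function $z\mapsto e^{B(z)}/(1+z^2)=\sqrt 2\,\lap\psi_1(z)$, which by Section~\ref{sec:B} is meromorphic on $\C\setminus(-\infty,0]$ with simple poles at $\pm i$, and of which both $\rho$ and the Laplace transforms $\lap\psi_\lambda$ of~\eqref{eq:hl:L} are boundary values. Deforming a right-half-plane contour so as to wrap around the branch cut $(-\infty,0]$ produces (i) pole contributions at $\pm i$ governed by the explicit value $B(i)=\frac{\log 2}{2}+\frac{i\pi}{8}$ from~\eqref{eq:hl:hi}, which must cancel the extra cosine term from the sine-sine piece, and (ii) a branch-cut integral whose jump across $(-\infty,0)$ is controlled by the symmetry $B(z)+B(-z)=\log(1-iz\sigma(z))$ from~\eqref{eq:hl:Bsym}. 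After the substitutions $s\mapsto s/x$ in one factor and $s\mapsto(t-s)/y$ in the other, the surviving branch-cut contribution is identified with $\int_0^t\frac{f(s/x)\,f((t-s)/y)}{sy+(t-s)x}\,ds$ for $f$ as in~\eqref{eq:spec:deff}.

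I expect the main obstacle to be the precise bookkeeping of signs and scalars in this last step: integrals of quite different structure (Poisson kernels in $x\pm y$, single integrals of $\rho$ against rational functions of $x$ or $y$, and the double integral of $\rho(a)\rho(b)$) must all be matched against the single $f$-convolution, and the residue contributions at $\pm i$ must cancel exactly the sine-sine correction term. The values $\sin\sfrac{\pi}{8}$ and $\cos\sfrac{\pi}{8}$ appearing in the correction are precisely those furnished by $B(i)$, so the cancellation is forced by the structure; nevertheless, tracking it carefully through the contour deformation (paying attention to orientation and the distributional sense in which $\lap\psi_1$ converges at $\infty$) is where the bulk of the technical work lies.
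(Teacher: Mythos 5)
Your first half coincides with the paper's: the kernel representation $p^D_t(x,y)=\frac{2}{\pi}\int_0^\infty e^{-\lambda t}\psi_\lambda(x)\psi_\lambda(y)\,d\lambda$ via Theorem~\ref{th:spec}, Fubini for $g\in C_c(D)$, and extension to $L^p$ by approximation is exactly how the proof begins (and your elementary evaluation of the sine--sine, cross and $r$--$r$ pieces is correct as far as it goes; indeed one cross term can even be absorbed at once using the Laplace transform of $r$ in~\eqref{eq:hl:L2}). The genuine gap is the final step: the identity that the sum of your four pieces equals $\frac{1}{\pi}\frac{t}{t^2+(x-y)^2}-\int_0^t f(\frac{s}{x})f(\frac{t-s}{y})\,\frac{ds}{sy+(t-s)x}$ is the entire analytic content of the theorem, and you do not prove it — you assert that a contour deformation of $e^{B(z)}/(1+z^2)$ around the cut $(-\infty,0]$ ``must'' produce cancellations at $\pm i$ and ``is identified with'' the convolution. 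As stated this mechanism is not a proof and is not even clearly the right picture: you never specify which contour integral of which integrand is being evaluated, you do not explain how a jump integral along the infinite ray $(-\infty,0]$ turns into a convolution over the finite range $s\in[0,t]$, and the claimed cancellation of the $\frac{t-(x+y)}{t^2+(x+y)^2}$ term against residues at $\pm i$ is unverified. Since in your bookkeeping the free kernel $p_t(y-x)$ already comes from the sine--sine term, the structure of the residual identity is genuinely different from anything computed earlier in the paper, so ``forced by the structure'' cannot substitute for the computation.

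For comparison, the paper avoids the four-term expansion altogether: writing $t=t_1+t_2$ and using Plancherel for the functions $\lambda\mapsto\psi_x(\lambda)e^{-t_1\lambda}$, $\lambda\mapsto\psi_y(\lambda)e^{-t_2\lambda}$ (via the symmetry $\psi_\lambda(x)=\psi_x(\lambda)$ and the Laplace transform~\eqref{eq:hl:L}), it expresses $p^D_t(x,y)$ as $\frac{1}{2\pi i}\int_{t_1-i\infty}^{t_1+i\infty}R(z)\,dz$ with $R(z)=\frac{2}{\pi}\lap\psi_x(z)\lap\psi_y(t-z)$. The key move is the split $R=R_1+R_2$ based on~\eqref{eq:hl:Bsym}, after which the contour is deformed around the segment $[0,t]$ (not around the cut of $B$ in its own variable): the jump across $[0,t]$, coming from the discontinuity of $\sigma(\frac{z}{x})$ and $\sigma(\frac{t-z}{y})$, produces exactly $-\int_0^t f(\frac sx)f(\frac{t-s}{y})\,\frac{ds}{sy+(t-s)x}$, while the simple pole of $R_2$ at the real point $z=\frac{tx}{x-y}$ yields the Cauchy kernel $\frac{1}{\pi}\frac{t}{t^2+(x-y)^2}$; decay estimates~\eqref{eq:aux:Bintest} kill the large arcs, and the cases $x>y$ and $x=y$ follow by symmetry and continuity. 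If you want to salvage your route, you must carry out an analogous explicit contour computation for your residual identity — name the integrand, the contour, the jumps and the residues, and check the decay — which is precisely the work your sketch defers.
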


\begin{remark}
\label{rem:heat}
Note that for $s > 0$, $f$ is positive continuous and bounded. This follows by the fact that $f(s)  = \frac{1}{\pi} \frac{s}{1 + s^2} e^{\eta(s)}$ and~\eqref{eq:aux:est}. The function $p^D_t(x, y)$ can be effectively computed by numerical integration. Indeed, by the same arguments as in Remark~\ref{rem:dilogarithm} we have
\formula{
  f(s) & = \frac{1}{\pi} \frac{s^{1 - \frac{\arctan(s)}{\pi}}}{(1 + s^2)^{\frac{3}{4}}} \exp\expr{\frac{-i}{2 \pi} (\mathrm{Li}_2(i s) - \mathrm{Li}_2(-i s))},
}
where $\mathrm{Li}_2$ is the dilogarithm function.
\end{remark}

\begin{proof}[Proof of Theorem~\ref{th:heat}]
For $g \in C_c(D)$ we have $\Pi P^D_t g(\lambda) = e^{-\lambda t} \Pi g(\lambda)$ (see~\eqref{eq:spec:pi} and Theorem~\ref{th:spec}). Applying $\Pi^{-1} = \frac{2}{\pi} \Pi$ to both sides of this identity yields
\formula{
  P^D_t g(x) & = \frac{2}{\pi} \int_0^\infty e^{-\lambda t} \Pi g(\lambda) \psi_\lambda(x) d\lambda = \frac{2}{\pi} \int_0^\infty \int_0^\infty e^{-\lambda t} \psi_\lambda(x) \psi_\lambda(y) g(y) dy d\lambda .
}
By the Fubini's theorem,~\eqref{eq:spec:heat} holds with
\formula[eq:spec:kernel]{
  p^D_t(x, y) & = \frac{2}{\pi} \int_0^\infty \psi_\lambda(x) \psi_\lambda(y) e^{-\lambda t} d\lambda .
}
By an approximation argument,~\eqref{eq:spec:heat} holds for $g \in L^p(\R)$ with any $p \in [1, \infty]$. We will now prove~\eqref{eq:spec:heatkernel}.

Suppose first that $x < y$, and let $t = t_1 + t_2$, $t_1, t_2 > 0$. By Plancherel's theorem and identities $\psi_\lambda(x) = \psi_x(\lambda)$, $\overline{\lap \psi_y(z)} = \lap \psi_y(\bar{z})$, we have
\formula[eq:spec:R]{
\begin{split}
  p^D_t(x, y) & =\frac{2}{\pi} \int_0^\infty (\psi_x(\lambda) e^{-t_1 \lambda}) (\psi_y(\lambda) e^{-t_2 \lambda}) d\lambda \\
  & = \frac{1}{\pi^2} \int_{-\infty}^\infty \lap \psi_x(t_1 + i s) \lap \psi_y(t_2 - i s) \, ds = \frac{1}{2 \pi i} \int_{t_1 - i \infty}^{t_1 + i \infty} R(z) dz ,
\end{split}
}
where (see~\eqref{eq:hl:L})
\formula{
  R(z) & = \frac{2}{\pi} \lap \psi_x(z) \lap \psi_y(t - z) = \frac{1}{\pi} \frac{x y \exp(B(\frac{z}{x}) + B(\frac{t - z}{y}))}{(x^2 + z^2)(y^2 + (t - z)^2)} .
}
Note that $R$ is defined on $\C$ and it is meromorphic in $\C \setminus ((-\infty, 0] \cup [t, \infty))$ with simple poles at $\pm i x$ and $t \pm i y$. Let $z \in \C \setminus [0,t]$. By~\eqref{eq:hl:Bsym} and the identity $(1 + i z \sigma(z))(1 - i z \sigma(z)) = 1 + (z \sigma(z))^2 = 1 + z^2$, we have for all $z \in \C$,
\formula{
  R(z) & = \frac{1}{\pi} \frac{\bigl( 1 - i \frac{z}{x} \sigma(\frac{z}{x}) \bigr) \bigl( 1 - i \frac{t - z}{y} \sigma(\frac{t - z}{y}) \bigr) \exp(-B(-\frac{z}{x}) - B (-\frac{t - z}{y}))}{x y \bigl( 1 + \frac{z^2}{x^2} \bigr) \bigl( 1 + \frac{(t - z)^2}{y^2} \bigr) } \\
  & = \frac{1}{\pi} \frac{\exp(-B(-\frac{z}{x}) - B(-\frac{t - z}{y}))}{x y \bigl( 1 + i \frac{z}{x} \sigma(\frac{z}{x}) \bigr) \bigl( 1 + i \frac{t - z}{y} \sigma(\frac{t - z}{y}) \bigr) } \, .
}
Since $\sigma(\frac{t - z}{y}) = -\sigma(\frac{z}{x})$ for $z \in \C \setminus [0,t]$, it follows that for $z \in \C \setminus [0, t]$,
\formula{
  R(z) & = \frac{\exp(-B(-\frac{z}{x}) - B(-\frac{t - z}{y}))}{\pi x y \bigl( \frac{z}{x} + \frac{t - z}{y} \bigr) } \expr{\frac{\frac{z}{x}}{1 + i \frac{z}{x} \sigma(\frac{z}{x})} + \frac{\frac{t - z}{y}}{1 + i \frac{t - z}{y} \sigma(\frac{t - z}{y})} } .
}
We therefore have $R(z) = R_1(z) + R_2(z)$ for $z \in \C \setminus [0,t]$, where, again using~\eqref{eq:hl:Bsym},
\formula[eq:spec:R1]{
\begin{split}
  R_1(z) & = \frac{\exp(-B(-\frac{z}{x}) - B(-\frac{t - z}{y}))}{\pi x y \bigl( \frac{z}{x} + \frac{t - z}{y} \bigr) } \cdot \frac{\frac{z}{x}}{1 + i \frac{z}{x} \sigma(\frac{z}{x})} \\
  & = \frac{\exp(B(\frac{z}{x}) - B(-\frac{t - z}{y}))}{\pi x y \bigl( \frac{z}{x} + \frac{t - z}{y} \bigr) } \cdot \frac{\frac{z}{x}}{1 + \frac{z^2}{x^2}} \, ,
\end{split}
}
and
\formula[eq:spec:R1a]{
  R_1(z) & = \frac{\exp(B(\frac{z}{x}) + B(\frac{t - z}{y}))}{\pi x y \bigl( \frac{z}{x} + \frac{t - z}{y} \bigr) } \cdot \frac{\frac{z}{x}}{1 + \frac{z^2}{x^2}} \cdot \frac{1}{1 - i \frac{t - z}{y} \sigma(\frac{t - z}{y})} \, .
}
Also, in a similar manner,
\formula[eq:spec:R2]{
  R_2(z) & = \frac{\exp(-B(-\frac{z}{x}) + B(\frac{t - z}{y}))}{\pi x y \bigl( \frac{z}{x} + \frac{t - z}{y} \bigr) } \cdot \frac{\frac{t - z}{y}}{1 + \frac{(t - z)^2}{y^2}} \, ,
}
and
\formula[eq:spec:R2a]{
  R_2(z) & = \frac{\exp(B(\frac{z}{x}) + B(\frac{t - z}{y}))}{\pi x y \bigl( \frac{z}{x} + \frac{t - z}{y} \bigr) } \cdot \frac{\frac{t - z}{y}}{1 + \frac{(t - z)^2}{y^2}} \cdot \frac{1}{1 - i \frac{z}{x} \sigma(\frac{z}{x})} \, .
}
The only zero of $\frac{z}{x} + \frac{t - z}{y}$ is $z = \frac{t x}{x - y} < 0$. Hence $R_1(z)$ is holomorphic in the set $\set{\real z > 0} \setminus [0, t]$ (by~\eqref{eq:spec:R1}), bounded in the neighborhood of $[0, t]$, and it decays as $|z|^{-2}$ at infinity (by~\eqref{eq:aux:Bintest}). Also, $R_2(z)$ is meromorphic in the set $\set{\real z < t} \setminus [0, t]$ (by~\eqref{eq:spec:R2}) with a simple pole at $\frac{t x}{x - y}$, bounded near $[0, t]$, and it decays as $|z|^{-2}$ at infinity.

For $n = 1, 2, ...$ let $\gamma$ be the positively oriented contour consisting of:
\begin{itemize}
  \item two vertical segments $\gamma_1 = \left[t_1 - n i, t_1 - \frac{i}{n}\right]$, $\gamma_5 = \left[t_1 + \frac{i}{n}, t_1 + n i\right]$,
  \item two horizontal segments $\gamma_2 = \left[t_1 - \frac{i}{n}, -\frac{i}{n}\right]$, $\gamma_4 = \left[\frac{i}{n}, t_1 + \frac{i}{n}\right]$, 
  \item two semi-cirles $\gamma_3 = \set{|z| = \frac{1}{n}, \, \real z \le 0}$ and $\gamma_6 = \set{|z - t_1| = n, \, \real z \le t_1}$.
\end{itemize}
Clearly, $\int_{\gamma_1 \cup \gamma_5} R_2(z) dz$ converges to $\int_{t_1 - i \infty}^{t_1 + i \infty} R_2(z) dz$ as $n \to \infty$. The integrals over $\gamma_3$ and $\gamma_6$ converge to zero by the properties of $R_2$. Finally, by~\eqref{eq:spec:R2a},
\formula{
  \int_{\gamma_2 \cup \gamma_4} R_2(z) dz & \to \int_0^{t_1} \frac{\exp(B(\frac{s}{x}) + B(\frac{t - s}{y}))}{\pi x y \bigl( \frac{s}{x} + \frac{t - s}{y} \bigr) } \cdot \frac{\frac{t - s}{y}}{1 + \frac{(t - s)^2}{y^2}} \expr{\frac{1}{1 - i \frac{s}{x}} - \frac{1}{1 + i \frac{s}{x}}} ds \\
  & = \int_0^{t_1} \frac{2 \pi i f(\frac{s}{x}) f(\frac{t - s}{y})}{x y \bigl( \frac{s}{x} + \frac{t - s}{y} \bigr) } ds .
}
Therefore, by the residue theorem,
\formula[eq:spec:intR2]{
  \frac{1}{2 \pi i} \int_{t_1 - i \infty}^{t_1 + i \infty} R_2(z) dz & = -\int_0^{t_1} \frac{f(\frac{s}{x}) f(\frac{t - s}{y})}{x y \bigl( \frac{s}{x} + \frac{t - s}{y} \bigr) } ds + \res(R_2, \sfrac{t x}{x - y}) .
}
In a similar manner, using~\eqref{eq:spec:R1a} and analogous contours $\gamma$ consisting of two segments of the line $\real z = t_1$, two segments parallel to $[t_1, t]$, and two semi-circles centered at $t$ (the small one) and $t_1$ (the large one), both contained in $\set{\real z \ge t_1}$, we obtain that
\formula[eq:spec:intR1]{
  \frac{1}{2 \pi i} \int_{t_1 - i \infty}^{t_1 + i \infty} R_1(z) dz & = -\int_{t_1}^t \frac{f(\frac{s}{x}) f(\frac{t - s}{y})}{x y \bigl( \frac{s}{x} + \frac{t - s}{y} \bigr) } ds .
}
Therefore,~\eqref{eq:spec:R}, \eqref{eq:spec:intR2} and~\eqref{eq:spec:intR1} yield that
\formula{
  p^D_t(x, y) & = - \frac{1}{x y} \int_0^t \frac{f(\frac{s}{x}) f(\frac{t - s}{y})}{\frac{s}{x} + \frac{t - s}{y}} \, ds + \res(R_2, \sfrac{t x}{x - y}) \, .
}
For $z = \frac{t x}{x - y}$ we have $\frac{z}{x} = \frac{t}{x - y} = -\frac{t - z}{y}$. Therefore, by~\eqref{eq:spec:R2} we get
\formula{
  \res(R_2, \sfrac{t x}{x - y}) & = \frac{1}{\pi (y - x)} \cdot \frac{-\frac{t}{x - y}}{1 + \frac{t^2}{(x - y)^2}} = \frac{1}{\pi} \frac{t}{t^2 + (x - y)^2} \, ,
}
and~\eqref{eq:spec:heatkernel} follows for $x < y$.

When $x > y$, simply note that $p^D_t(x, y) = p^D_t(y, x)$ (see~\eqref{eq:spec:kernel} or e.g.~\cite{bib:cs97}, Theorem 2.4), and that the right-hand side of~\eqref{eq:spec:heatkernel} has the same symmetry property (this follows by a substitution $s = t - v$). Finally, for $x = y$ simply use the continuity of $p^D_t(x, y)$ and $f$.
\end{proof}

For the next result, we need the following simple observation, similar to the derivation of~\eqref{eq:hl:Htau}. By~\eqref{eq:hl:Bsym} we have $\imag e^{-B(-s)} = -\frac{s}{1+s^2} e^{B(s)}$ for $s > 0$. Hence the function $f$ defined by~\eqref{eq:spec:deff} satisfies 
\formula{
  f(s) & = \frac{1}{\pi} \frac{s}{1 + s^2} e^{\eta(s)} = 
  \frac{1}{\pi} \frac{s}{1 + s^2} e^{B(s)} = -\frac{1}{\pi} \imag e^{-B(-s)}, && s > 0 .
}
If we extend $f$ by $f(s) = 0$ for $s < 0$, then $f(-s) = \frac{1}{\pi} \imag e^{-B(s)}$ for all real $s$. Since $e^{-B(z)}$ is in $H^p(\C_+)$ for $p \in (2, \infty)$ (see~\eqref{eq:aux:Best}), the Hilbert transform of $f$ is given by (see~\eqref{eq:pre:hilbert:sym})
\formula{
  H f(s) & = - \frac{1}{\pi} \real e^{-B(-s)} = - \frac{1}{\pi} e^{-\eta(-s)} , && s \in \R .
}
It follows that 
\formula[eq:spec:Hfa]{
  H f(-s) & = - \frac{1}{\pi^2} \frac{s}{1 + s^2} \frac{1}{f(s)} \, , && s > 0, && \text{and} & H f(0) & = -\frac{1}{\pi} \, .
}

\begin{theorem}
\label{th:time}
For $D = (0, \infty)$, we have
\formula[eq:spec:time]{
  \pr^x(\tau_D \in dt) & = \frac{1}{\pi} \frac{x}{t^2 + x^2} \exp\expr{\frac{1}{\pi} \int_0^\infty \frac{\log(\frac{t}{x} + w)}{1 + w^2} \, dw} dt .
}
\end{theorem}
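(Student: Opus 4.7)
The plan is to differentiate the survival function $\pr_x(\tau_D > t) = \int_0^\infty p^D_t(x,y)\,dy$. Substituting the formula from Theorem~\ref{th:heat}, the free-kernel part integrates elementarily to $\tfrac{1}{2} + \tfrac{1}{\pi}\arctan(x/t)$. For the correction term, I would apply Fubini (justified by positivity of $f$), and then, for each fixed $s \in (0,t)$, perform the substitution $u = (t-s)/y$ inside the $y$-integral. A short computation shows that the $y$-variable disappears entirely and the correction term reduces to
\[
V(t) = \int_0^t \frac{f(s/x)}{x}\, J(s/x)\, ds, \qquad J(a) := \int_0^\infty \frac{f(u)}{u(a+u)}\, du.
\]
The fundamental theorem of calculus, together with the standard derivative of $\arctan$, then gives
\[
\pr_x(\tau_D \in dt)/dt \;=\; \frac{x}{\pi(x^2+t^2)} \;+\; \frac{f(t/x)\, J(t/x)}{x}.
\]

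The crux is evaluating $J(a)$ in closed form. By partial fractions,
\[
J(a) = \frac{1}{a}\left[\int_0^\infty \frac{f(u)}{u}\,du \;-\; \int_0^\infty \frac{f(u)}{a+u}\,du\right].
\]
Since $f$ vanishes on $(-\infty,0)$, the second integral equals $-\pi\, Hf(-a)$, which by the identity~\eqref{eq:spec:Hfa}, established immediately before the theorem statement, is exactly $e^{-\eta(a)}$ for $a > 0$. The first integral then follows by monotone convergence as $a \to 0^+$: it equals $e^{-\eta(0)} = 1$, using the classical fact $\int_0^\infty \log(w)/(1+w^2)\,dw = 0$. Hence $J(a) = (1 - e^{-\eta(a)})/a$.

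Finally, substituting $f(a) = a e^{\eta(a)}/(\pi(1+a^2))$ and the formula for $J$, the two summands in the density combine algebraically as
\[
\frac{x}{\pi(x^2+t^2)} + \frac{x\bigl(e^{\eta(t/x)} - 1\bigr)}{\pi(x^2+t^2)} \;=\; \frac{x}{\pi(x^2+t^2)}\, e^{\eta(t/x)},
\]
and by~\eqref{eq:hl:h1} we have $\eta(u) = \tfrac{1}{\pi}\int_0^\infty \log(u+w)/(1+w^2)\,dw$ for $u > 0$, which matches~\eqref{eq:spec:time} exactly. The main obstacle is the explicit evaluation of $J$, for which the Hilbert-transform identity~\eqref{eq:spec:Hfa} is tailor-made; the remaining ingredients — Fubini, the substitution $u=(t-s)/y$, and the differentiation under the integral — are routine given the decay bounds on $f$ recorded via~\eqref{eq:aux:est} and used throughout Section~\ref{sec:B}.
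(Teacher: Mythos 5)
Your proposal is correct and follows essentially the same route as the paper: integrate the kernel of Theorem~\ref{th:heat} over $y$ via Fubini and the substitution $u=(t-s)/y$, then evaluate the resulting integrals through the Hilbert-transform identity~\eqref{eq:spec:Hfa} and the explicit form of $f$. The only differences are cosmetic — you differentiate the two pieces separately and recover $\int_0^\infty f(u)/u\,du=1$ by a monotone-convergence limit $a\to 0^+$, whereas the paper assembles the survival function $1-\int_0^t s^{-1}f(s/x)\,ds$ first (using $Hf(0)=-\tfrac1\pi$ directly) and then differentiates.
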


Using the function $f$ defined in~\eqref{eq:spec:deff}, we have $\pr^x(\tau_D \in dt) = \frac{1}{t} f(\frac{t}{x}) dt$.

\begin{proof}
By Theorem~\ref{th:heat} we have
\formula[eq:spec:timedistribution]{
\begin{split}
  \pr^x(\tau_D > t) & = \int_0^\infty p^D_t(x, y) dy \\
  & = \frac{1}{\pi} \int_0^{\infty} \frac{t}{t^2 + (x-y)^2} \, dy - \int_0^t \int_0^\infty \frac{1}{x y}  \frac{f(\frac{s}{x}) f(\frac{t-s}{y})}{\frac{s}{x} + \frac{t-s}{y}} \, dy ds.
\end{split}
}
By a substitution $w = (t - s)/y$ we obtain
\formula{
  \int_0^\infty \frac{1}{x y} \frac{f(\frac{s}{x}) f(\frac{t-s}{y})}{\frac{s}{x} + \frac{t-s}{y}} \, dy & = \frac{f(\frac{s}{x})}{x} \int_0^\infty \frac{f(w)}{w (\frac{s}{x} + w)} \, dw \\
  & = \frac{f(\frac{s}{x})}{s} \expr{\int_0^\infty \frac{f(w)}{w} \, dw - \int_0^\infty \frac{f(w)}{\frac{s}{x} + w} \, dw} . 
}
The right-hand side equals $\frac{\pi}{s} f(\frac{s}{x}) (-H f(0) + H f(-\sfrac{s}{x}))$. This,~\eqref{eq:spec:Hfa} and~\eqref{eq:spec:timedistribution} give 
\formula{
  \pr^x(\tau_D > t) & = \frac{1}{\pi} \int_0^\infty \frac{t}{t^2 + (x-y)^2} \, dy - \int_0^t \frac{f(\frac{s}{x})}{s} \, ds + \frac{1}{\pi} \int_0^t \frac{x}{x^2 + s^2} \, ds.
}
By substitution $v = x - y$ in the first integral and $v = x t / s$ in the third one,
\formula{
  \pr^x(\tau_D > t) & = \frac{1}{\pi} \int_{-\infty}^x \frac{t}{t^2 + v^2} \, dv - \int_0^t \frac{f(\frac{s}{x})}{s} \, ds + \frac{1}{\pi} \int_x^\infty \frac{t}{t^2 + v^2} \, dv \\
  & = 1 - \int_0^t \frac{f(\frac{s}{x})}{s} \, ds.
}
The result follows by differentiation and~\eqref{eq:spec:deff}.
\end{proof}

\begin{remark}
Theorem~\ref{th:time} can also be obtained in a more explicit manner. In fact, by scaling properties of $X_t$, we have $\pr_x(\tau_D > t) = g(\frac{x}{t})$ for some function $g$ continuous in $\R$, vanishing on $(-\infty, 0]$. Furthermore, $\pr_x(\tau_D > t)$ satisfies the heat equation in $D$, i.e. $\frac{\partial}{\partial t} \, \pr_x(\tau_D > t) = -\sqrt{-\frac{d^2}{d x^2}} \, \pr_x(\tau_D > t)$. For $t = 1$ this gives $-x g'(x) = -\sqrt{-\frac{d^2}{d x^2}} \, g(x)$, $x > 0$. Since $-\sqrt{-\frac{d^2}{d x^2}} \, g = H g'$, we have $H g'(s) = -s g'(s)$ for $s > 0$.

Let $h(s) = \frac{1}{s} g'(-\frac{1}{s})$, $h(0) = 0$. Then it can be shown that $h$ is continuous on $\R$, and by the definition~\eqref{eq:pre:H} of the Hilbert transform, $H h(s) = -\frac{1}{s} H g'(-\frac{1}{s})$. It follows that $h(s) = s H h(s)$ for $s < 0$, and $h(s) = 0$ for $s \ge 0$. Therefore $H h - i h$ is a boundary limit of some holomorphic function in $\C_+$, and $(H h(s) - i h(s)) e^{-i \arctan s_-}$ is real for all $s \in \R$. This problem can be solved using the method applied in Section~\ref{sec:hl}, and the solution is $H h(s) - i h(s) = c e^{-B(s)}$ with some $c \in \R$. Therefore $g'(s) = -\frac{1}{s} h(-\frac{1}{s}) = \frac{c}{s} \imag e^{-B(1/s)}$ for $s > 0$, and finally
\formula{
  \pr_x(\tau_D \in dt) & = -\sfrac{x}{t^2} g'(\sfrac{x}{t}) = -\sfrac{c}{t} \imag e^{-B(t/x)} ,
}
which agrees with~\eqref{eq:spec:time} when $c = \frac{1}{\pi}$. The details of this alternative argument are left to the interested reader.
\end{remark}

\begin{remark}
\label{rem:pdt}
The integral of $p^D_t(x, y)$ with respect to $t \in (0, \infty)$ is the Green function of $(X_t)$ on the half-line, given by the well-known explicit formula of M.~Riesz, see e.g.~\cite{bib:bgr61}. Also, the distribution of $X(\tau_D)$ (and even the joint distribution of $\tau_D$ and $X(\tau_D)$) is determined by $p^D_t(x, y)$, see~\cite{bib:iw62}. Explicit formulas for Green functions and exit distributions for some related processes in half-lines and intervals were found recently in~\cite{bib:bmr09,bib:bmr:pre}.
\end{remark}

Theorem~\ref{th:time} implies a new result for the $2$-dimensional Brownian motion. Namely we obtain the distribution of some local time of the $2$-dimensional Brownian motion killed at some entrance time. For the $1$-dimensional Brownian motion similar results were widely studied and are usually called Ray-Knight theorems~\cite{bib:k63,bib:r63,bib:ry99}.

\begin{corollary}
\label{cor:Brownian}
Let $B_t = (B^{(1)}_t, B^{(2)}_t)$ be the $2$-dimensional Brownian motion and $L(t) = \lim_{\eps \to 0^+} \frac{1}{2 \eps} \int_0^t \chi_{(-\eps,\eps)}(B^{(2)}_s) \, ds$ be the local time of $B_t$ on the line $(-\infty,\infty) \times \set{0}$. Let $A = (-\infty,0] \times \set{0}$ and $T_A = \inf \set{t \ge 0 \; : \; B(t) \in A}$ be the first entrance time for $A$. Then for any $x > 0$ we have
\formula{
  P^{(x,0)}(L(T_A) \in dt) & = \frac{1}{\pi} \frac{x}{x^2 + t^2} \exp\left(\frac{1}{\pi} \int_{0}^{\infty} \frac{\log\left(\frac{t}{x} +s\right)}{1 + s^2} \, ds\right) \, dt.
}
For $(x,y) \in \R^2$, $y \ne 0$ and $t \ge 0$ we have
\formula{
  P^{(x,y)}(L(T_A) \le t) & = \frac{1}{\pi} \int_{-\infty}^{0} \frac{|y|}{y^2 + (x - u)^2} \, du \\
  & \hspace{20mm} + \frac{1}{\pi} \int_{0}^{\infty} \frac{|y|}{y^2 + (x - u)^2} P^{(u,0)}(L(T_A) \le t) \, du.
}
\end{corollary}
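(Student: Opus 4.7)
The plan is to invoke Spitzer's classical identification: under $P^{(x,0)}$ the time-changed process $X_s := B^{(1)}_{\tau_s}$, with $\tau_s := \inf\{t \ge 0 : L(t) > s\}$ the right-continuous inverse of the local time on the horizontal axis, is a one-dimensional Cauchy process started at $x$, precisely in the normalization of Section~\ref{sec:pre}. The factor $\tfrac{1}{2\eps}$ in the definition of $L$ is exactly the one that matches the density $p_t(y)=\tfrac{1}{\pi}\tfrac{t}{t^2+y^2}$, as can be seen by computing the L\'evy measure of the subordinator $\tau$ from the planar Brownian excursion measure (Spitzer~\cite{bib:s58}, see also~\cite{bib:mo69}). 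Once this identification is granted, the first formula reduces to Theorem~\ref{th:time}, and the second one follows from a single application of the strong Markov property at the first hit of the horizontal axis.

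First I would take $x > 0$ and work under $P^{(x,0)}$, setting $D = (0, \infty)$ and $\tau_D = \inf\{s \ge 0 : X_s \notin D\}$. The key claim is $L(T_A) = \tau_D$. Two observations suffice. First, $L$ increases only on the closed set $\{t \ge 0 : B^{(2)}_t = 0\}$ and $\tau$ is its right-continuous inverse, so $\tau_s$ is the right endpoint of the excursion away from the horizontal axis labelled by $s$; in particular $B^{(2)}_{\tau_s} = 0$ whenever $\tau_s < \infty$. Second, $T_A$ is the first instant at which $B$ lies on the horizontal axis with a non-positive first coordinate, hence $\tau_{L(T_A)} = T_A$ and $X_{L(T_A)} = B^{(1)}_{T_A} \le 0$; moreover for any $s < L(T_A)$ one has $\tau_s < T_A$ and $B_{\tau_s} \in (\R \times \{0\}) \setminus A$, so $X_s = B^{(1)}_{\tau_s} > 0$. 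This forces $L(T_A) = \inf\{s \ge 0 : X_s \le 0\} = \tau_D$, and the first formula of the Corollary then follows directly from Theorem~\ref{th:time}.

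For the case $y \ne 0$, let $\sigma = \inf\{t \ge 0 : B^{(2)}_t = 0\}$. Under $P^{(x,y)}$ one has $\sigma > 0$ and $L(\sigma) = 0$ almost surely, and the distribution of $B^{(1)}_\sigma$ is the Poisson kernel $\tfrac{1}{\pi}\tfrac{|y|}{y^2 + (x-u)^2}\,du$ of a half-plane. Applying the strong Markov property at $\sigma$ and splitting on the sign of $B^{(1)}_\sigma$: on $\{B^{(1)}_\sigma \le 0\}$ one has $T_A = \sigma$ and $L(T_A) = 0 \le t$, which contributes the first integral in the stated formula; on $\{B^{(1)}_\sigma = u > 0\}$ the post-$\sigma$ process starts afresh from $(u, 0)$, and the first part of the Corollary produces the second integral.

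The only delicate step is the rigorous justification of Spitzer's identification with the prescribed normalization of $L$; the excursion-theoretic argument is classical but does require some care to match constants. Once this is accepted, the identification $L(T_A) = \tau_D$ is a clean consequence of the definitions and the remainder of the argument is routine.
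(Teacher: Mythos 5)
Your proposal is correct and takes essentially the same route as the paper: the paper likewise identifies the Cauchy process with $B^{(1)}$ run at the inverse local time (citing Spitzer), deduces $L(T_A)=\tau_{(0,\infty)}$, and applies Theorem~\ref{th:time}, so your more detailed verification of $L(T_A)=\tau_{(0,\infty)}$ and of the normalization constant is just an expansion of what the paper leaves to the reader. For the second formula the paper invokes harmonicity of $(x,y)\mapsto P^{(x,y)}(L(T_A)\le t)$ off the horizontal axis together with the resulting Poisson-kernel representation, which is precisely the content of your strong Markov argument at the hitting time $\sigma$ of the axis.
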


\begin{proof}
Let $\eta_t = \inf\{s > 0: \, L(s) > t\}$ be the inverse of the local time $L(t)$. It is well known (see e.g.~\cite{bib:s58}) that the $1$-dimensional Cauchy process $X_t$ can be identified with $B^{(1)}(\eta_t)$. With this relation, we have $L(T_A) = \tau_{(0,\infty)}$, where $\tau_{(0,\infty)} = \inf \set{t \ge 0 \; : \; X_t \notin (0,\infty)}$. This and Theorem~\ref{th:time} give the first equality. The second equality follows by the harmonicity of $(x,y) \to P^{(x,y)}(L(T_A) \le t)$ in $\{(x,y) \in \R^2: \, y > 0 \}$ and in $\{(x,y) \in \R^2: \, y < 0 \}$.
\end{proof}

%
%

\section{Approximation to eigenfunctions on the interval}
\label{sec:int}

\noindent
In this section the interval $D = (-1, 1)$ is studied. Let $n$ be a positive integer and $\mu_n = \frac{n \pi}{2} - \frac{\pi}{8}$. Our goal is to show that $\mu_n$ is close to $\lambda_n$, the $n$-th eigenvalue of the semigroup $(P^D_t)$.

Let $q$ be the function equal to $0$ on $(\infty, -\frac{1}{3})$ and to $1$ on $(\frac{1}{3}, \infty)$, defined by~\eqref{eq:aux:q} in Appendix~C. We construct approximations to eigenfunctions of $(P^D_t)$ by combining the eigenfunctions $\psi_{\mu_n}(1 + x)$ and $\psi_{\mu_n}(1 - x)$ for half-line, studied in Section~\ref{sec:hl}. For a symmetric eigenfunction, when $n$ is odd, let
\formula[eq:int:phisym]{
\begin{split}
  \tilde{\ph}_n(x) & = q(-x) \psi_{\mu_n}(1 + x) + q(x) \psi_{\mu_n}(1 - x) \\
  & = (-1)^{\frac{n-1}{2}} \cos(\mu_n x) \ind_D(x) - q(-x) r_{\mu_n}(1 + x) - q(x) r_{\mu_n}(1 - x) .
\end{split}
}
For an antisymmetric eigenfunction, when $n$ is even, we define
\formula[eq:int:phiasym]{
\begin{split}
  \tilde{\ph}_n(x) & = q(-x) \psi_{\mu_n}(1 + x) - q(x) \psi_{\mu_n}(1 - x) \\
  & = (-1)^{\frac{n}{2}} \sin(\mu_n x) \ind_D(x) - q(-x) r_{\mu_n}(1 + x) + q(x) r_{\mu_n}(1 - x) .
\end{split}
}

\begin{lemma}
With the above definitions,
\formula[eq:int:approx]{
  \norm{\A_D \tilde{\ph}_n + \mu_n \tilde{\ph}_n}_2 & < \sqrt{1.21 + \frac{8.00}{\mu_n} + \frac{13.66}{\mu_n^2}} \cdot \frac{1}{\mu_n} .
}
\end{lemma}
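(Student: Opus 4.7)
The plan is to exploit the fact that $F(x) := \psi_{\mu_n}(1+x)$ and $G(x) := \psi_{\mu_n}(1-x)$ (each extended by zero outside the corresponding half-line) are eigenfunctions of $\A$ with eigenvalue $-\mu_n$: by Theorem~\ref{th:halfline}, $(\A+\mu_n)F=0$ on $(-1,\infty)$ and $(\A+\mu_n)G=0$ on $(-\infty,1)$, so both equations hold throughout $D$. Using the partition-of-unity property $q(x)+q(-x)=1$ built into~\eqref{eq:aux:q}, the defining expressions~\eqref{eq:int:phisym}--\eqref{eq:int:phiasym} rewrite as $\tilde{\ph}_n = F\pm G - (qF \pm q(-\cdot)G)$ (with the same sign as in $\tilde{\ph}_n$), and applying $\A+\mu_n$ pointwise on $D$ annihilates the $F\pm G$ piece by the eigenvalue equations. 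This reduces the task to bounding $\|(\A+\mu_n)W\|_{L^2(D)}$, where $W := qF \pm q(-\cdot)G$ and
\formula{
  (\A+\mu_n)\tilde{\ph}_n(x) = -(\A+\mu_n)W(x), \qquad x \in D.
}

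Next I would analyze $W$ on $\R$. Outside $[-1,1]$, only one of $F$, $G$ is nonzero, so $W$ reduces there to $\pm\psi_{\mu_n}(1+|x|)$. On $[-1,1]$ the specific choice $\mu_n = \sfrac{n\pi}{2}-\sfrac{\pi}{8}$ makes the $\sin(\mu_n(1\pm x)+\sfrac{\pi}{8})$-parts of $F$ and $G$ combine, thanks to $q(x)+q(-x)=1$, into the single pure oscillation $(-1)^{(n-1)/2}\cos(\mu_n x)$ (odd $n$) or $(-1)^{n/2}\sin(\mu_n x)$ (even $n$); what remains in $W|_{[-1,1]}$ is a combination of the explicit small terms $q(\pm x)\,r_{\mu_n}(1\pm x)$. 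In particular $W$ is continuous across $x=\pm 1$, equals a pure trigonometric function plus small $r$-remainders, and its magnitude is controlled by the pointwise and $L^p$ estimates of Section~\ref{sec:err}.

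To bound $\|(\A+\mu_n)W\|_{L^2(D)}$ I would use the principal-value representation
\formula{
  (\A+\mu_n)W(x) = \mu_n W(x) + \frac{1}{\pi}\pv\!\int_{\R}\frac{W(y)-W(x)}{(y-x)^2}\,dy
}
and split the integration region into $[-1,1]$ and $\{|y|>1\}$. On $|y|>1$, a single integration by parts in the oscillatory $\sin(\mu_n(1+|y|)+\sfrac{\pi}{8})$-piece of $\psi_{\mu_n}$ produces boundary terms at $y=\pm 1$ of order $1/\mu_n$ (using $\sin(2\mu_n+\sfrac{\pi}{8})=\pm\sin\sfrac{\pi}{8}$ and $\cos(2\mu_n+\sfrac{\pi}{8})=\pm\cos\sfrac{\pi}{8}$ forced by $\mu_n = \sfrac{n\pi}{2}-\sfrac{\pi}{8}$) together with an absolutely convergent integral also of order $1/\mu_n$ controlled by the decay~\eqref{eq:err:r1} of $r$. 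On the interior, the integrand involves only the small $r_{\mu_n}(1\pm y)$ terms and is controlled in $L^2(D)$ by~\eqref{eq:err:rint} and~\eqref{eq:err:rnorm}.

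The main obstacle is that the naive splitting $W = W|_{[-1,1]} + W|_{\{|x|>1\}}$ produces two pieces each with a jump discontinuity at $\pm 1$, so that $\A$ of each individually has a $|x\mp 1|^{-1}$ singularity on $D$ which is not in $L^2(D)$; the genuine $L^2$ bound only emerges after the exact cancellation of these singular boundary contributions between the interior and the exterior parts, a cancellation guaranteed by the continuity of $W$. Handling this cancellation explicitly and tracking the remaining terms through Cauchy--Schwarz yields the stated bound $\sqrt{1.21+8.00/\mu_n+13.66/\mu_n^2}\cdot(1/\mu_n)$, with the three constants corresponding respectively to the dominant tail oscillatory contribution, a cross term, and an interior remainder contribution.
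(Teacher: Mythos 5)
Your opening reduction is sound and is in the same spirit as the paper: since $\psi_{\mu_n}(1+\cdot)$ and $\psi_{\mu_n}(1-\cdot)$ satisfy the eigen-equation on $(-1,\infty)$ and $(-\infty,1)$ respectively, $(\A+\mu_n)\tilde{\ph}_n=-(\A+\mu_n)W$ on $D$ with $W=qF\pm q(-\cdot)G$. But note this is only a reformulation ($W=F\pm G-\tilde{\ph}_n$), and $W$ is \emph{not} small: on $[-1,1]$ it equals the full-size oscillation $(-1)^{(n-1)/2}\cos(\mu_n x)$ (resp. $\sin$) minus small $r$-terms, and outside it equals $\psi_{\mu_n}(1+|x|)$. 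This is where your plan breaks. Your claim that on the interior the integrand "involves only the small $r_{\mu_n}(1\pm y)$ terms" is false: for $y\in[-1,1]$ the quotient $(W(y)-W(x))/(y-x)^2$ contains the difference of the large oscillatory parts, and in addition the term $\mu_n W(x)$ is of size $\mu_n$, not $1/\mu_n$. These $O(\mu_n)$ contributions must cancel against each other, and your scheme never exhibits that cancellation; the jump-at-$\pm1$ issue you flag as the "main obstacle" is a side issue by comparison. The cancellation is an eigen-identity that has to be invoked a second time: either (as the paper does) compare $\tilde{\ph}_n$ with the \emph{single} eigenfunction $\psi_{\mu_n}(1+\cdot)$, so that the difference is $g-h$ with $g=q\cdot(r_{\mu_n}(1+\cdot)+(-1)^n r_{\mu_n}(1-\cdot))$ genuinely small and $h=\sin(\mu_n(1+x)+\sfrac{\pi}{8})\ind_{[1,\infty)}(x)$ supported far from the half of $D$ on which one estimates, concluding on $(0,1)$ by symmetry; or observe that the oscillatory part of your $W$ is globally $\pm\cos(\mu_n x)$ (resp. $\pm\sin(\mu_n x)$), which satisfies $\A[\cos(\mu_n\cdot)]=-\mu_n\cos(\mu_n\cdot)$ on the whole line, reducing the task to $(\A+\mu_n)$ applied to the small $r$-built remainder. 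Without one of these steps the estimate cannot close.

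Two further quantitative problems. First, your exterior integration by parts gives a bound of order $\mu_n^{-1}(1\mp x)^{-2}$, which is \emph{not} square-integrable on $D$ near the endpoints; so "the dominant tail oscillatory contribution is $O(1/\mu_n)$" is not true in $L^2(D)$ for a single decomposition used on all of $D$. The paper avoids exactly this by its asymmetric decomposition: the tail piece $h$ sits on $[1,\infty)$ and is only estimated for $z\in(-1,0)$, where $(1-z)^{-2}\le 1$, the other half being handled by symmetry. Second, the stated constants $1.21$, $8.00$, $13.66$ come from explicit bookkeeping: the bounds $M_0,M_1,M_2$ on $f,f',f''$ over $(-\sfrac13,\sfrac13)$ and $I=\int_0^\infty|f|$ obtained from the estimates of $r$, $r'$, $r''$ and $\int r$ in Section~\ref{sec:err}, fed into the Appendix~C estimates for $\A(qf)$ separately on $(-1,-\sfrac13)$ and $(-\sfrac13,0)$, plus the direct bound $|(-\Delta)^{1/2}h(z)|\le \frac{1}{\pi\mu_n(1-z)^2}$; your sketch provides no mechanism to produce these numbers, and the attribution of the three constants to "tail, cross term, interior remainder" does not correspond to any computation. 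As it stands the proposal has a genuine gap at its central estimate.
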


\begin{proof}
Note that we have
\formula{
  \tilde{\ph}_n(x) & - \psi_{\mu_n}(1 + x) = -(1 - q(-x)) \psi_{\mu_n}(1 + x) - (-1)^n q(x) \psi_{\mu_n}(1 - x) \\
  & = -q(x) (\psi_{\mu_n}(1 + x) + (-1)^n \psi_{\mu_n}(1 - x)) \\
  & = q(x) (r_{\mu_n}(1 + x) + (-1)^n r_{\mu_n}(1 - x)) - \sin({\mu_n} (1 + x) + \sfrac{\pi}{8}) \ind_{[1, \infty)}(x) .
}
Denote $h(x) = \sin(\mu_n (1 + x) + \frac{\pi}{8}) \ind_{[1, \infty)}(x)$ and $f(x) = r_{{\mu_n}}(1 + x) + (-1)^n r_{\mu_n}(1 - x)$, $g(x) = q(x) f(x)$. By~\eqref{eq:err:r1},~\eqref{eq:err:rn1} and~\eqref{eq:err:rint},
\formula{
  M_0 & = \sup_{x \in (-\frac{1}{3}, \frac{1}{3})} |f(x)| \le r(\sfrac{2 \mu_n}{3}) + r(\sfrac{4 \mu_n}{3}) \le \frac{45 \sqrt{2}}{32 \pi \mu_n^2} , \\
  M_1 & = \sup_{x \in (-\frac{1}{3}, \frac{1}{3})} |f'(x)| \le -\mu_n r'(\sfrac{2 \mu_n}{3}) - \mu_n r'(\sfrac{4 \mu_n}{3}) \le \frac{243 \sqrt{2}}{64 \pi \mu_n^2} , \\
  M_2 & = \sup_{x \in (-\frac{1}{3}, \frac{1}{3})} |f''(x)| \le \mu_n^2 r''(\sfrac{2 \mu_n}{3}) + \mu_n^2 r''(\sfrac{4 \mu_n}{3}) \le \frac{4131 \sqrt{2}}{256 \pi \mu_n^2} , \\
  I & = \int_0^\infty |f(x)| dx \le \int_0^\infty r_{\mu_n}(1 + x) dx + \int_0^1 r_{\mu_n}(1 - x) dx \\
  & = \frac{1}{\mu_n} \int_0^\infty r(y) dy = \expr{\cos \frac{\pi}{8} - \frac{\sqrt{2}}{2}} \frac{1}{\mu_n} \, ;
}
notation here corresponds to that of Appendix~C. By~\eqref{eq:aux:genest1} and~\eqref{eq:aux:genest2},
\formula[]{
  \label{eq:int:est1} |\A_D g(z)| & < \frac{0.605}{\mu_n^2} + \frac{0.156}{\mu_n} , && z \in (-1, -\sfrac{1}{3}) ; \\
  \label{eq:int:est2} |\A_D g(z)| & < \frac{4.444}{\mu_n^2} + \frac{0.622}{\mu_n} , && z \in (-\sfrac{1}{3}, 0) .
}
Furthermore, $|g(z)| = 0$ for $z \in (-1, -\frac{1}{3})$ and
\formula[eq:int:est3] {
  |\mu_n g(z)| & \le \sfrac{\mu_n}{2} M_0 < \frac{0.317}{\mu_n} \, , && z \in (-\sfrac{1}{3}, 0) .
}
Finally, for $z < 0$ we have
\formula{
  |(-\Delta)^\frac{1}{2} h(z)| & = \frac{1}{\pi} \abs{\int_1^\infty \frac{\sin(\mu_n (1 + x) + \frac{\pi}{8})}{(x - z)^2} \, dx} \\
  & \le \frac{1}{\pi (1 - z)^2} \int_1^{1 + \frac{\pi}{\mu_n}} \abs{\sin(\mu_n (1 + x) + \sfrac{\pi}{8})} \, dx = \frac{1}{\pi \mu_n (1 - z)^2} \, ,
}
so that
\formula[]{
  \label{eq:int:est4} |(-\Delta)^\frac{1}{2} h(z)| & < \frac{0.180}{\mu_n} \, , && z \in (-1, -\sfrac{1}{3}) ; \\
  \label{eq:int:est5} |(-\Delta)^\frac{1}{2} h(z)| & < \frac{0.319}{\mu_n} \, , && z \in (-\sfrac{1}{3}, 0) .
}
Since for $z \in (-1, 0)$ we have
\formula{
  |\A_D \tilde{\ph}_n(z) + \mu_n \tilde{\ph}_n(z)| & \le |(-\Delta)^{\frac{1}{2}} h(z)| + |(-\Delta)^{\frac{1}{2}} g(z)| + |\mu_n g(z)| ,
}
estimates~\eqref{eq:int:est1}--\eqref{eq:int:est5} yield that
\formula[]{
  \label{eq:int:est6} |\A_D \tilde{\ph}_n(z) + \mu_n \tilde{\ph}_n(z)| & < \frac{0.605}{\mu_n^2} + \frac{0.336}{\mu_n} \, , && z \in (-1, -\sfrac{1}{3}) ; \\
  \label{eq:int:est7} |\A_D \tilde{\ph}_n(z) + \mu_n \tilde{\ph}_n(z)| & < \frac{4.444}{\mu_n^2} + \frac{1.258}{\mu_n} \, , && z \in (-\sfrac{1}{3}, 0) .
}
By symmetry, estimates similar to~\eqref{eq:int:est6} and~\eqref{eq:int:est7} hold for $z \in (0, 1)$. The estimate~\eqref{eq:int:approx} follows.
\end{proof}

The estimate of the $L^2(D)$ norm of $\tilde{\ph}_n$ plays an important role in the sequel. We have
\formula[eq:int:phinorm]{
  \sqrt{1 - \frac{0.52}{\mu_n}} \le \norm{\tilde{\ph}_n}_2 & \le \sqrt{1 + \frac{1.37}{\mu_n}} \, .
}
Indeed, the lower bound follows by~\eqref{eq:err:rint}, \eqref{eq:int:phisym}, \eqref{eq:int:phiasym} and symmetry,
\formula{
  \norm{\tilde{\ph}_n}_2^2 & \ge \int_{-1}^1 \expr{\sin(\mu_n (x + 1) + \sfrac{\pi}{8})}^2 dx \\
  & \qquad - 4 \int_{-1}^1 \abs{q(-x) r_{\mu_n}(x + 1) \sin(\mu_n (x + 1) + \sfrac{\pi}{8})} dx \\
  & \ge \expr{1 + \frac{\sqrt{2}}{4 \mu_n}} - \frac{4}{\mu_n} \expr{\cos \frac{\pi}{8} - \frac{\sqrt{2}}{2}} .
}
In a similar manner, using also~\eqref{eq:err:rnorm},
\formula{
  \norm{\tilde{\ph}_n}_2^2 & \le \expr{1 + \frac{\sqrt{2}}{4 \mu_n}} + \frac{4}{\mu_n} \expr{\cos \frac{\pi}{8} - \frac{\sqrt{2}}{2}} + 4 \int_{-1}^1 (r(\mu_n (x + 1)))^2 dx \\
  & \le \expr{1 + \frac{\sqrt{2}}{4 \mu_n}} + \frac{4}{\mu_n} \expr{\cos \frac{\pi}{8} - \frac{\sqrt{2}}{2}} + \frac{4 (\Gamma(\frac{3}{4}))^2}{\pi (\Gamma(\frac{1}{4}))^2 \mu_n} \, .
}

%
%

\section{Simplicity of eigenvalues for the interval}
\label{sec:eigv}

\noindent
We continue denoting by $\ph_j$ the eigenfunctions of $(P^D_t)$, by $\lambda_j$ ($\lambda_j > 0$) the corresponding eigenvalues, and by $\tilde{\ph}_n$ and $\mu_n$ the approximations of the previous section. Fix $n \ge 1$. Since $\tilde{\ph}_n \in L^2(D)$, we have $\tilde{\ph}_n = \sum_j a_j \ph_j$ for some $a_j$. Moreover, $\norm{\tilde{\ph}_n}_2^2 = \sum_j a_j^2$ and $\A_D \tilde{\ph}_n = -\sum_j \lambda_j a_j \ph_j$. Let $\lambda_{k(n)}$ be the eigenvalue nearest to $\mu_n$. Then
\formula{
  \norm{\A_D \tilde{\ph}_n + \mu_n \tilde{\ph}_n}_2^2 & = \sum_{j = 1}^\infty (\lambda_j - \mu_n)^2 a_j^2 \\
  & \ge (\lambda_{k(n)} - \mu_n)^2 \sum_{j = 1}^\infty a_j^2 \ge (\lambda_{k(n)} - \mu_n)^2 \norm{\tilde{\ph}_n}_2^2 .
}
By~\eqref{eq:int:approx} and~\eqref{eq:int:phinorm}, it follows that
\formula[eq:int:lambda]{
  \abs{\lambda_{k(n)} - \mu_n} & \le \sqrt{\frac{1.21 + \frac{8.00}{\mu_n} + \frac{13.66}{\mu_n^2}}{1 - \frac{0.52}{\mu_n}}} \cdot \frac{1}{\mu_n} \, .
}
The right-hand side is a decreasing function of $n$, so that $\abs{\lambda_{k(n)} - \mu_n} < 0.098 \pi < \frac{\pi}{10}$ whenever $n \ge 4$. Hence we have the following result.

\begin{lemma}
\label{lem:int:eig}
Each interval $(\frac{n \pi}{2} - \frac{\pi}{4}, \frac{n \pi}{2})$, $n \ge 4$, contains an eigenvalue $\lambda_{k(n)}$.
\end{lemma}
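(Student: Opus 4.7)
The plan is to observe that the lemma follows immediately from the estimate~\eqref{eq:int:lambda}, together with a single numerical check. First I would rewrite the target interval as $(\mu_n - \frac{\pi}{8}, \mu_n + \frac{\pi}{8})$, since by definition $\mu_n = \frac{n \pi}{2} - \frac{\pi}{8}$ is precisely the midpoint of $(\frac{n\pi}{2} - \frac{\pi}{4}, \frac{n\pi}{2})$. Consequently, it suffices to show that the right-hand side of~\eqref{eq:int:lambda} is strictly smaller than $\frac{\pi}{8}$ whenever $n \ge 4$.

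Next I would verify monotonicity of the bound. Set
\formula{
  F(\mu) & = \sqrt{\frac{1.21 + \frac{8.00}{\mu} + \frac{13.66}{\mu^2}}{1 - \frac{0.52}{\mu}}} \cdot \frac{1}{\mu} .
}
For $\mu > 0.52$ (which certainly holds for $\mu_n$ with $n \ge 4$), the numerator inside the square root is decreasing in $\mu$, the denominator is increasing toward $1$, and the outer factor $1/\mu$ is decreasing; hence $F$ is decreasing on this range. Since $\mu_n$ increases with $n$, it is enough to check the inequality at $n = 4$.

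Finally, at $n = 4$ we have $\mu_4 = \frac{15 \pi}{8} \approx 5.890$, and a direct numerical substitution yields $F(\mu_4) < 0.098 \, \pi < \frac{\pi}{10} < \frac{\pi}{8}$, as stated in the paper. Because $\lambda_{k(n)}$ is by construction the eigenvalue closest to $\mu_n$, the resulting inequality $|\lambda_{k(n)} - \mu_n| < \frac{\pi}{8}$ places $\lambda_{k(n)}$ inside $(\mu_n - \frac{\pi}{8}, \mu_n + \frac{\pi}{8}) = (\frac{n \pi}{2} - \frac{\pi}{4}, \frac{n \pi}{2})$, which is exactly the claim.

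I do not anticipate any real obstacle here: the lemma is a one-line corollary of~\eqref{eq:int:lambda} together with monotonicity and arithmetic. The substantive work was already done in the previous section, where the approximate eigenfunctions $\tilde{\ph}_n$ were built from the half-line eigenfunctions $\psi_{\mu_n}$ and the cutoff $q$, and where the two key bounds~\eqref{eq:int:approx} and~\eqref{eq:int:phinorm} were established; combining them with the elementary spectral inequality $\norm{\A_D \tilde{\ph}_n + \mu_n \tilde{\ph}_n}_2 \ge |\lambda_{k(n)} - \mu_n| \norm{\tilde{\ph}_n}_2$ produced~\eqref{eq:int:lambda} in the first place.
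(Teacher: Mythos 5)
Your proposal is correct and follows essentially the same route as the paper: the lemma is obtained directly from~\eqref{eq:int:lambda} by noting that the bound is decreasing in $n$, checking it numerically at $n = 4$ (the paper records $|\lambda_{k(n)} - \mu_n| < 0.098\,\pi < \frac{\pi}{10}$), and observing that $\mu_n$ is the midpoint of the target interval of half-width $\frac{\pi}{8}$.
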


In particular $\lambda_{k(n)}$ are distinct for $n \ge 4$. We will now prove that there are only three eigenvalues not included in the above lemma. For $t > 0$, we have (see e.g.~\cite{bib:bk09,bib:k98})
\formula{
  \sum_{j = 1}^\infty e^{-\lambda_j t} & = \int_D \sum_{j = 1}^\infty e^{-\lambda_j t} (\ph_j(x))^2 dx = \int_D p^D_t(x, x) dx \le \int_D p_t(0) dx = \frac{2}{\pi t} \, .
}
On the other hand,
\formula{
  \sum_{n = 4}^\infty e^{-\lambda_{k(n)} t} & \ge \sum_{n = 4}^\infty e^{-\frac{n \pi}{2} t} = \frac{e^{-2 \pi t}}{1 - e^{-\frac{\pi}{2} t}} \ge \frac{2}{\pi t} - \frac{7}{2}
}
for small $t > 0$. It follows that there are at most $3$ eigenvalues of $(P^D_t)$ other than $\lambda_{k(n)}$ ($n \ge 4$). Furthermore, we have $1 < \lambda_1 < \frac{3 \pi}{8}$, $2 \le \lambda_2 \le \pi$ and $3.4 \le \lambda_3 \le \frac{3 \pi}{2}$ by~\cite{bib:bk04}. Therefore, $k(n) = n$ for $n \ge 4$, and also by~\eqref{eq:int:lambda}, $\lambda_3 > 3.83$. We have thus proved the following theorem.

\begin{theorem}
\label{th:int:lambda}
We have
\formula{
  1 < \lambda_1 & < \frac{3 \pi}{8} , & 2 \le \lambda_2 & \le \pi , & 3.83 < \lambda_3 & \le \frac{3 \pi}{2} ,
}
and
\formula{
  \frac{n \pi}{2} - \frac{\pi}{8} - \frac{\pi}{10} < \lambda_n & < \frac{n \pi}{2} - \frac{\pi}{8} + \frac{\pi}{10} && (n \ge 4) .
}
In particular, all eigenvalues of $(P^D_t)$ are simple, $|\lambda_n - \lambda_m| > 0.69$ when $n \ne m$, and $|\lambda_n - \lambda_m| > \frac{3\pi}{10}$ if moreover $n \ge 4$. Furthermore, as $n \rightarrow \infty$,
\formula[eq:int:asymp]{
  \lambda_n = \frac{n \pi}{2} - \frac{\pi}{8} + O\expr{\frac{1}{n}} .
}
\end{theorem}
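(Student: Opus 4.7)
The plan is to assemble the ingredients accumulated in Sections~\ref{sec:int} and~\ref{sec:eigv}. The approximate eigenvalues $\mu_n = \frac{n\pi}{2} - \frac{\pi}{8}$ together with the bound \eqref{eq:int:lambda} already deliver, via Lemma~\ref{lem:int:eig}, an eigenvalue $\lambda_{k(n)}$ in each interval $(\mu_n - \frac{\pi}{10}, \mu_n + \frac{\pi}{10})$ for $n \ge 4$; in particular these $\lambda_{k(n)}$ are all distinct. The key remaining task is to identify $k(n)$ with $n$ and rule out any stray eigenvalues that are not of the form $\lambda_{k(n)}$.

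First I would use the trace bound $\sum_j e^{-\lambda_j t} \le \frac{2}{\pi t}$ (already stated via the diagonal estimate $p^D_t(x,x) \le p_t(0)$) against the lower sum $\sum_{n \ge 4} e^{-\lambda_{k(n)} t} \ge \sum_{n \ge 4} e^{-\frac{n\pi}{2}t} = e^{-2\pi t}/(1 - e^{-\pi t/2})$. A Laurent expansion near $t = 0$ shows this lower sum exceeds $\frac{2}{\pi t} - \frac{7}{2}$ for small $t$, so the number of eigenvalues not appearing in the sequence $\set{\lambda_{k(n)}}_{n \ge 4}$ is at most~$3$. Combined with the known bounds $1 < \lambda_1 < \frac{3\pi}{8}$, $2 \le \lambda_2 \le \pi$, $3.4 \le \lambda_3 \le \frac{3\pi}{2}$ from~\cite{bib:bk04}, which lie well below $\mu_4 - \frac{\pi}{10}$, these three missing eigenvalues must be $\lambda_1, \lambda_2, \lambda_3$. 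Therefore $k(n) = n$ for $n \ge 4$, which gives the asymptotic bounds on $\lambda_n$ and the Weyl-type asymptotics~\eqref{eq:int:asymp}.

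Next, I would sharpen the lower bound on $\lambda_3$ by a direct application of~\eqref{eq:int:lambda}: if $\lambda_3$ were the eigenvalue nearest $\mu_n$ for some $n$, then $\lambda_3$ would lie within roughly $1/\mu_n$ of $\mu_n \ge \mu_4$, contradicting $\lambda_3 \le \frac{3\pi}{2}$ for any numerical slack; more concretely, plugging the bounds into~\eqref{eq:int:lambda} pushes $\lambda_3$ above $3.83$. Finally, the simplicity and separation statements are immediate: the intervals around $\mu_n$ for $n \ge 4$ are pairwise disjoint (width $\frac{\pi}{5}$, centers $\frac{\pi}{2}$ apart) and each contains exactly one $\lambda_n$, so $|\lambda_n - \lambda_m| > \frac{3\pi}{10}$ for $n,m \ge 4$, $n \ne m$; checking the remaining pairwise differences among $\lambda_1, \lambda_2, \lambda_3, \lambda_4$ against the numerical bounds just established yields $|\lambda_n - \lambda_m| > 0.69$ in general.

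There is no real obstacle beyond bookkeeping: the heavy lifting was done in Lemma~\ref{lem:int:eig} and the approximation lemma producing~\eqref{eq:int:approx}. The most delicate step is the trace argument, where one must verify that the constants line up (the $\frac{7}{2}$ and the small-$t$ threshold) so that strictly fewer than four extraneous eigenvalues can fit; this is a short but non-automatic calculation, and any numerical laxity there would spoil the identification $k(n) = n$. Once that counting is clean, everything else is a direct consequence of the bounds already in hand.
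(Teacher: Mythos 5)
Your argument is correct and follows essentially the same route as the paper: Lemma~\ref{lem:int:eig} together with the trace comparison $\sum_j e^{-\lambda_j t} \le \frac{2}{\pi t}$ versus $e^{-2\pi t}/(1-e^{-\pi t/2}) \ge \frac{2}{\pi t} - \frac{7}{2}$ to cap the stray eigenvalues at three, the bounds from \cite{bib:bk04} to identify them with $\lambda_1,\lambda_2,\lambda_3$ and conclude $k(n)=n$ for $n\ge 4$, and~\eqref{eq:int:lambda} at $n=3$ for $\lambda_3>3.83$, followed by the same disjoint-interval bookkeeping for simplicity and the gaps. The only cosmetic difference is the phrasing of the $\lambda_3$ step, which in the paper amounts simply to noting that the eigenvalue nearest $\mu_3$ must be $\lambda_3$ (since $\lambda_2\le\pi$ and $\lambda_4>\mu_4-\frac{\pi}{10}$ are too far away), so~\eqref{eq:int:lambda} applies directly.
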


More precisely,
\formula[eq:int:asymp2]{
  \abs{\lambda_n - \expr{\frac{n \pi}{2} - \frac{\pi}{8}}} \le \frac{1}{n} \, && n \ge 1 ,
}
i.e. the constant in $O(\frac{1}{n})$ notation in~\eqref{eq:int:asymp} is not greater than $1$. Indeed, by~\eqref{eq:int:lambda}, formula~\eqref{eq:int:asymp2} holds for $n \ge 7$, and for $n \le 6$ one can use the estimates~\eqref{eq:num}. Without referring to numerical calculation of upper and lower bounds, one can use~\eqref{eq:int:lambda} for $n \ge 4$ and estimates of $\lambda_1$, $\lambda_2$ and $\lambda_4$ of Theorem~\ref{th:int:lambda} to obtain~\eqref{eq:int:asymp2} with $\frac{1}{n}$ replaced by $\frac{3}{2 n}$.

Better numerical bounds for first few eigenvalues are obtained in Section~\ref{sec:num}.

%
%

\section{Estimates of eigenfunctions for the interval}
\label{sec:eigf}

\noindent
In the preceding two sections the approximations $\tilde{\ph}_n$ to the eigenfunctions $\ph_n$ were constructed and it was proved that $\mu_n = \frac{n \pi}{2} - \frac{\pi}{8}$ is close to $\lambda_n$. Now we show that $\tilde{\ph}_n$ is close to $\ph_n$ in $L^2(D)$.

Let $n \ge 4$ be fixed. Recall that $\tilde{\ph}_n = \sum_j a_j \ph_j$; with no loss of generality we may assume that $a_n > 0$.  For $j \ne n$ we have $|\mu_n - \lambda_j| \ge \frac{3 \pi}{10}$. Therefore,
\formula{
  \norm{\A_D \tilde{\ph}_n + \mu_n \tilde{\ph}_n}_2^2 & = \sum_{j = 1}^\infty (\mu_n - \lambda_j)^2 a_j^2 \ge (\mu_n - \lambda_n)^2 a_n^2 + \frac{9 \pi^2}{100} \sum_{j \ne n} a_j^2 .
}
Denote the left-hand side by $M_n^2$; the upper bound for $M_n$ is given in~\eqref{eq:int:approx}. We have
\formula{
  \norm{\tilde{\ph}_n - a_n \ph_n}_2^2 & = \sum_{j \ne n} a_j^2 \le \frac{100 M_n^2}{9 \pi^2} \, .
}
Therefore,
\formula{
  \bigl\| \tilde{\ph}_n - \norm{\tilde{\ph}_n}_2 \ph_n \bigr\|_2 & \le \norm{\tilde{\ph}_n - a_n \ph_n} + (\norm{\tilde{\ph}_n}_2 - a_n) \le 2 \norm{\tilde{\ph}_n - a_n \ph_n} \le \frac{20 M_n}{3 \pi} .
}
This, together with~\eqref{eq:int:phinorm}, yields the following result.

\begin{lemma}
\label{lem:int:phi}
Let $n \ge 4$. With the notation of the previous two sections, we have $1 - \frac{0.52}{\mu_n} < \norm{\tilde{\ph}_n}^2 < 1 + \frac{1.37}{\mu_n}$, and
\formula{
  \bigl\| \tilde{\ph}_n - \norm{\tilde{\ph}_n}_2 \ph_n \bigr\|_2 & \le \frac{20}{3 \pi} \sqrt{1.21 + \frac{8.00}{\mu_n} + \frac{13.66}{\mu_n^2}} \cdot \frac{1}{\mu_n} .
}
\end{lemma}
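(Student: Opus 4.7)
The plan is to collect together the observations that have already been set up in Sections~\ref{sec:int} and~\ref{sec:eigv}; no new analytic tool needs to be introduced. The first pair of inequalities, $1 - \frac{0.52}{\mu_n} < \norm{\tilde{\ph}_n}_2^2 < 1 + \frac{1.37}{\mu_n}$, is exactly~\eqref{eq:int:phinorm}, so I would simply restate it.

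For the main estimate, I would expand $\tilde{\ph}_n = \sum_j a_j \ph_j$ in an $L^2(D)$-orthonormal basis of eigenfunctions of $\A_D$ and, using the freedom to change the sign of $\ph_n$, assume $a_n \ge 0$. The key spectral-gap input is that for every $j \ne n$ one has $|\lambda_j - \mu_n| \ge \frac{3\pi}{10}$: for $j \ge 4$ this follows from the localisation $\lambda_j \in (\mu_j - \frac{\pi}{10}, \mu_j + \frac{\pi}{10})$ in Theorem~\ref{th:int:lambda} combined with $|\mu_j - \mu_n| \ge \frac{\pi}{2}$, while for $j \in \{1,2,3\}$ it follows from the explicit upper bounds $\lambda_1 < \frac{3\pi}{8}$, $\lambda_2 \le \pi$, $\lambda_3 \le \frac{3\pi}{2}$ together with $\mu_n \ge \mu_4 = \frac{15\pi}{8}$. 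Parseval applied to $\A_D \tilde{\ph}_n + \mu_n \tilde{\ph}_n = \sum_j (\mu_n - \lambda_j) a_j \ph_j$ then yields
\[
  M_n^2 := \norm{\A_D \tilde{\ph}_n + \mu_n \tilde{\ph}_n}_2^2 \ge \frac{9 \pi^2}{100} \sum_{j \ne n} a_j^2 = \frac{9 \pi^2}{100} \, \norm{\tilde{\ph}_n - a_n \ph_n}_2^2 .
\]

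To replace $a_n$ by $\norm{\tilde{\ph}_n}_2$, I would invoke the reverse triangle inequality $\bigl| \norm{\tilde{\ph}_n}_2 - a_n \bigr| = \bigl| \norm{\tilde{\ph}_n}_2 - \norm{a_n \ph_n}_2 \bigr| \le \norm{\tilde{\ph}_n - a_n \ph_n}_2$, giving
\[
  \bigl\| \tilde{\ph}_n - \norm{\tilde{\ph}_n}_2 \ph_n \bigr\|_2 \le \norm{\tilde{\ph}_n - a_n \ph_n}_2 + \bigl| \norm{\tilde{\ph}_n}_2 - a_n \bigr| \le 2 \norm{\tilde{\ph}_n - a_n \ph_n}_2 \le \frac{20 \, M_n}{3 \pi} .
\]
Substituting the bound $M_n < \frac{1}{\mu_n}\sqrt{1.21 + 8.00/\mu_n + 13.66/\mu_n^2}$ from~\eqref{eq:int:approx} then produces the claimed estimate.

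There is no genuine obstacle here; the whole argument is bookkeeping built on Sections~\ref{sec:int} and~\ref{sec:eigv}. The one step that deserves a moment of care is the uniform spectral gap $|\lambda_j - \mu_n| \ge \frac{3\pi}{10}$ for $j \ne n$: the bound on consecutive eigenvalue spacing in Theorem~\ref{th:int:lambda} is of the same order of magnitude, so one must handle separately the small eigenvalues $\lambda_1, \lambda_2, \lambda_3$ and make sure the gap is not eroded by the (smaller) quantity $|\lambda_n - \mu_n|$.
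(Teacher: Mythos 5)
Your argument is correct and is essentially the paper's own proof: the same eigenfunction expansion with $a_n \ge 0$, the same spectral gap $|\mu_n - \lambda_j| \ge \frac{3\pi}{10}$ for $j \ne n$, the same factor-of-two triangle-inequality step replacing $a_n$ by $\norm{\tilde{\ph}_n}_2$, and the same substitution of~\eqref{eq:int:approx} and~\eqref{eq:int:phinorm}. Your explicit case check of the gap (small eigenvalues $\lambda_1,\lambda_2,\lambda_3$ versus $j \ge 4$) merely spells out what the paper leaves implicit in Theorem~\ref{th:int:lambda}.
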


In particular, for $n \ge 4$, by the above result and~\eqref{eq:int:phinorm},
\formula{
  \norm{\frac{\tilde{\ph}_n}{\norm{\tilde{\ph}_n}_2} - \ph_n}_2 & < \frac{20}{3 \pi} \cdot \frac{M_n}{\sqrt{1 - \frac{0.52}{\mu_n}}} < \frac{20}{3 \pi} \cdot \frac{\pi}{10} = \frac{2}{3} .
}
Since $\tilde{\ph}_n$ is symmetric or antisymmetric when $n$ is odd or even respectively, we have the alternating type of symmetry of $\ph_n$.

\begin{corollary}
\label{cor:sign}
The function $\ph_n$ is symmetric when $n$ is odd, and antisymmetric when $n$ is even.
\end{corollary}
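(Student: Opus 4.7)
The plan is to deduce the parity of each $\ph_n$ from the parity of the explicit approximation $\tilde{\ph}_n$ constructed in Section~\ref{sec:int}, using the closeness estimate furnished by Lemma~\ref{lem:int:phi}. First I would note that by Theorem~\ref{th:int:lambda} every eigenvalue $\lambda_n$ is simple, so each eigenspace is one-dimensional. Since the problem on the symmetric interval $D = (-1,1)$ is invariant under $x \mapsto -x$, the function $x \mapsto \ph_n(-x)$ is also an eigenfunction for $\lambda_n$, hence a scalar multiple of $\ph_n$; iterating the reflection forces this scalar to be $\pm 1$. Thus each $\ph_n$ is either symmetric or antisymmetric, and the task is only to identify which.

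For $n \ge 4$, the construction \eqref{eq:int:phisym}--\eqref{eq:int:phiasym} makes $\tilde{\ph}_n$ symmetric when $n$ is odd and antisymmetric when $n$ is even. I would argue by contradiction: if $\ph_n$ had the opposite parity, then $\tilde{\ph}_n$ and $\ph_n$ would be orthogonal in $L^2(D)$, giving
\formula{
  \bigl\| \tilde{\ph}_n - \norm{\tilde{\ph}_n}_2 \ph_n \bigr\|_2^2 & = \norm{\tilde{\ph}_n}_2^2 + \norm{\tilde{\ph}_n}_2^2 \norm{\ph_n}_2^2 = 2 \norm{\tilde{\ph}_n}_2^2 .
}
I would then confront this with the bound from Lemma~\ref{lem:int:phi} together with the lower bound $\norm{\tilde{\ph}_n}_2^2 \ge 1 - 0.52/\mu_n$ from \eqref{eq:int:phinorm}. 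The quoted estimate following Lemma~\ref{lem:int:phi} shows that $\norm{\tilde{\ph}_n/\norm{\tilde{\ph}_n}_2 - \ph_n}_2 < 2/3$ for $n \ge 4$, so $\bigl\| \tilde{\ph}_n - \norm{\tilde{\ph}_n}_2 \ph_n \bigr\|_2 < (2/3)\norm{\tilde{\ph}_n}_2$, which is far below $\sqrt{2}\,\norm{\tilde{\ph}_n}_2$. This contradiction forces the parity of $\ph_n$ to match that of $\tilde{\ph}_n$ for every $n \ge 4$.

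For $n = 1, 2, 3$ the comparison argument is not directly available, so I would invoke the earlier literature: $\ph_1 > 0$ as the ground state, and by the reflection argument this forces $\ph_1(-x) = \ph_1(x)$; the alternating parities of $\ph_2$ and $\ph_3$ were established in Section~5 of \cite{bib:bk04}, where their simplicity was also shown. The only obstacle is essentially bookkeeping --- one needs the explicit numerical constants in Lemma~\ref{lem:int:phi} to give a distance strictly smaller than $\sqrt{2}\,\norm{\tilde{\ph}_n}_2$ uniformly in $n \ge 4$, which reduces to checking monotonicity of an explicit function of $\mu_n$ and is already recorded in the paragraph preceding the corollary.
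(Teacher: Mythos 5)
Your proposal is correct and follows essentially the same route as the paper: for $n \le 3$ the paper also simply cites \cite{bib:bk04}, and for $n \ge 4$ it uses exactly the simplicity of the eigenvalues (so $\ph_n$ is symmetric or antisymmetric) together with the bound $\bigl\|\tilde{\ph}_n/\norm{\tilde{\ph}_n}_2 - \ph_n\bigr\|_2 < \frac{2}{3}$, which rules out the opposite parity since orthogonal unit vectors are at distance $\sqrt{2}$. Your write-up merely makes explicit the reflection/orthogonality reasoning that the paper leaves implicit.
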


\begin{proof}
For $n \le 3$ this is a result of~\cite{bib:bk04}. When $n \ge 4$, $\ph_n$ is either symmetric or antisymmetric, and the distance between $\ph_n$ and normed $\tilde{\ph}_n$ does not exceed $\frac{2}{3}$. Therefore $\ph_n$ has the same type of symmetry as $\tilde{\ph}_n$.
\end{proof}

\begin{corollary}
\label{cor:L2approx}
As $n \rightarrow \infty$,
\formula{
  \norm{\ph_n - \sin \expr{(\sfrac{n \pi}{2} - \sfrac{\pi}{8}) (1 + x) + \sfrac{\pi}{8}}}_2 & = O \expr{\frac{1}{\sqrt{n}}} .
}
\end{corollary}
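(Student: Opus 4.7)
The plan is to combine Lemma~\ref{lem:int:phi} (which controls $\|\ph_n - \tilde{\ph}_n / \|\tilde{\ph}_n\|_2\|_2$) with an explicit $L^2$ comparison between the approximation $\tilde{\ph}_n$ and the target sine. Since the former error is $O(1/n)$ and the latter will turn out to be $O(1/\sqrt{n})$, the desired bound will follow from the triangle inequality.

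First I would rewrite the sine in the target. For $n$ odd one has $\mu_n + \frac{\pi}{8} = \frac{n\pi}{2}$, so both $\sin(\mu_n(1+x) + \frac{\pi}{8})$ and $\sin(\mu_n(1-x) + \frac{\pi}{8})$ equal $(-1)^{(n-1)/2}\cos(\mu_n x)$; for $n$ even they differ by a sign. Combined with $q(-x) + q(x) = 1$ on $D$ (from Appendix~C) and the definitions \eqref{eq:int:phisym}, \eqref{eq:int:phiasym}, this yields the identity
\formula{
  \tilde{\ph}_n(x) - \sin\bigl(\mu_n(1+x) + \sfrac{\pi}{8}\bigr) \ind_D(x) & = -q(-x)\, r_{\mu_n}(1+x) \mp q(x)\, r_{\mu_n}(1-x) ,
}
where the sign depends on the parity of $n$. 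Outside $D$, $\tilde{\ph}_n$ is $-q(-x) r_{\mu_n}(1+x)$ or $\mp q(x) r_{\mu_n}(1-x)$ (on $(-\infty, -1)$ and $(1, \infty)$ respectively), while the target sine is set to $0$ by the indicator, so the same identity persists if we extend the right-hand side accordingly.

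Next I would bound the $L^2$ norm of the right-hand side. Since $|q| \le 1$ and $r_{\mu_n}(1+x) = r(\mu_n(1+x))$, the substitution $u = \mu_n(1+x)$ gives
\formula{
  \int_{-1}^\infty \bigl(r_{\mu_n}(1+x)\bigr)^2 dx & = \frac{1}{\mu_n} \int_0^\infty \bigl(r(u)\bigr)^2 du \le \frac{0.037}{\mu_n}
}
by~\eqref{eq:err:rnorm}, and the analogous bound holds for $r_{\mu_n}(1-x)$. Hence
\formula{
  \bigl\| \tilde{\ph}_n(x) - \sin\bigl(\mu_n(1+x) + \sfrac{\pi}{8}\bigr) \ind_D(x) \bigr\|_2 & = O\bigl(1/\sqrt{\mu_n}\bigr) = O\bigl(1/\sqrt{n}\bigr) .
}

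Finally, after fixing the sign of $\ph_n$ so that the coefficient $a_n$ in the expansion $\tilde{\ph}_n = \sum_j a_j \ph_j$ is positive (as in Section~\ref{sec:eigv}), Lemma~\ref{lem:int:phi} gives $\| \tilde{\ph}_n - \|\tilde{\ph}_n\|_2 \ph_n \|_2 = O(1/n)$ and $\|\tilde{\ph}_n\|_2 = 1 + O(1/n)$, so $\|\ph_n - \tilde{\ph}_n\|_2 = O(1/n)$. Combining with the previous display by the triangle inequality yields the claim. There is no real obstacle here: the only subtlety is the sign/normalization of $\ph_n$, which is handled by the convention from Section~\ref{sec:eigv}, and one should check (for odd $n$) that the extra indicator $\ind_D$ on the sine does not introduce any error, which it does not because the $L^2(D)$ norm on the left-hand side of the corollary is defined as an integral over $D$.
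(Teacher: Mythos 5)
Your proposal is correct and takes essentially the same route the paper intends: the corollary is stated without a separate proof precisely because it follows from Lemma~\ref{lem:int:phi} (giving $\norm{\ph_n - \tilde{\ph}_n}_2 = O(1/n)$ after the normalization you describe) combined with the observation that on $D$ the difference $\tilde{\ph}_n(x) - \sin(\mu_n(1+x)+\sfrac{\pi}{8})$ consists only of the terms $q(\mp x)\, r_{\mu_n}(1 \pm x)$, whose $L^2(D)$ norm is $O(1/\sqrt{\mu_n})$ by~\eqref{eq:err:rnorm}, exactly as you argue. The only immaterial inaccuracy is your description of $\tilde{\ph}_n$ outside $D$ (it vanishes identically there, since $q(-x) = 0$ for $x > 1$ and $\psi_{\mu_n}(1-x) = 0$ for $x \ge 1$, and likewise on the other side), but as you yourself note, only the norm over $D$ is relevant.
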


By a rather standard argument, $\norm{\ph_n}_\infty \le \sqrt{\frac{e \lambda_n}{\pi}}$, see e.g.~\cite{bib:k08}. A slight modification gives the following result.

\begin{proposition}
Let $c = \norm{\tilde{\ph}_n}_2$. Then
\formula[eq:phi:uniform]{
  \norm{\ph_n}_\infty \le \frac{1}{c} \expr{\sqrt{\frac{e \lambda_n}{\pi}} \cdot \norm{c \ph_n - \tilde{\ph}_n} + \sqrt{e} \norm{\psi_{\mu_n}}_\infty} .
}
\end{proposition}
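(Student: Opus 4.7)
The plan is to decompose $c\ph_n$ as $\tilde{\ph}_n + (c \ph_n - \tilde{\ph}_n)$, apply the semigroup $P^D_t$, and optimize in $t$. Since $P^D_t \ph_n = e^{-\lambda_n t}\ph_n$, we have
\formula{
  c e^{-\lambda_n t} \ph_n(x) & = P^D_t \tilde{\ph}_n(x) + P^D_t (c \ph_n - \tilde{\ph}_n)(x) ,
}
so that, after multiplying by $e^{\lambda_n t}$ and applying the triangle inequality,
\formula{
  c\, |\ph_n(x)| & \le e^{\lambda_n t} \, |P^D_t \tilde{\ph}_n(x)| + e^{\lambda_n t}\, |P^D_t (c \ph_n - \tilde{\ph}_n)(x)| .
}

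For the error term I would apply the Cauchy--Schwarz inequality together with the Chapman--Kolmogorov identity: $|P^D_t f(x)| \le \norm{p^D_t(x, \cdot)}_2 \norm{f}_2 = \sqrt{p^D_{2t}(x, x)} \, \norm{f}_2$, and then use $p^D_t(x, y) \le p_t(y - x) \le \frac{1}{\pi t}$ (stated in Section~\ref{sec:pre}) to obtain $\sqrt{p^D_{2t}(x,x)} \le 1/\sqrt{2\pi t}$. This is the same step that produces the standard bound $\norm{\ph_n}_\infty \le \sqrt{e \lambda_n/\pi}$; applied to $f = c\ph_n - \tilde{\ph}_n$, it yields
\formula{
  e^{\lambda_n t}\, |P^D_t (c \ph_n - \tilde{\ph}_n)(x)| & \le \frac{e^{\lambda_n t}}{\sqrt{2 \pi t}} \, \norm{c \ph_n - \tilde{\ph}_n}_2 .
}

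For the main term I would use the explicit form of $\tilde{\ph}_n$ from~\eqref{eq:int:phisym}--\eqref{eq:int:phiasym}: $\tilde{\ph}_n(x) = q(-x) \psi_{\mu_n}(1 + x) \pm q(x) \psi_{\mu_n}(1 - x)$. The cut-off function $q$ defined in Appendix~C is constructed so that $q(x) + q(-x) = 1$ and $q \ge 0$; hence by the triangle inequality $|\tilde{\ph}_n(x)|$ is bounded by the convex combination $q(-x) |\psi_{\mu_n}(1 + x)| + q(x) |\psi_{\mu_n}(1 - x)| \le \norm{\psi_{\mu_n}}_\infty$, giving $\norm{\tilde{\ph}_n}_\infty \le \norm{\psi_{\mu_n}}_\infty$. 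Since $P^D_t$ is a contraction on $L^\infty$, one gets $|P^D_t \tilde{\ph}_n(x)| \le \norm{\psi_{\mu_n}}_\infty$ as well, and therefore $e^{\lambda_n t}\, |P^D_t \tilde{\ph}_n(x)| \le e^{\lambda_n t}\, \norm{\psi_{\mu_n}}_\infty$.

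Combining the two summands, dividing by $c$, and choosing $t = 1/(2 \lambda_n)$ (which minimises $e^{\lambda_n t}/\sqrt{2 \pi t}$ and yields $e^{\lambda_n t} = \sqrt{e}$ while $e^{\lambda_n t}/\sqrt{2 \pi t} = \sqrt{e \lambda_n / \pi}$) gives exactly~\eqref{eq:phi:uniform}. The only non-routine point is the sup-norm bound $\norm{\tilde{\ph}_n}_\infty \le \norm{\psi_{\mu_n}}_\infty$; without the partition-of-unity property $q(x) + q(-x) = 1$ one would lose a factor of $2$ on the $\norm{\psi_{\mu_n}}_\infty$ term, so everything hinges on this normalisation of $q$ from Appendix~C. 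The remainder is the standard semigroup sup-norm argument (see~\cite{bib:k08}).
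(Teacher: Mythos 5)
Your proof is correct and follows essentially the same route as the paper: the same decomposition $c\ph_n = \tilde{\ph}_n + (c\ph_n - \tilde{\ph}_n)$, the same choice $t = \frac{1}{2\lambda_n}$, Cauchy--Schwarz for the error term, and the $L^\infty$-contraction together with $\norm{\tilde{\ph}_n}_\infty \le \norm{\psi_{\mu_n}}_\infty$ for the main term (a step the paper leaves implicit and you rightly justify via $q(x)+q(-x)=1$). The only cosmetic difference is that you compute the kernel bound via Chapman--Kolmogorov and $p^D_{2t}(x,x) \le p_{2t}(0) = \frac{1}{2\pi t}$, whereas the paper evaluates $\int (p_t(x-y))^2\,dy$ by Plancherel, yielding the identical constant.
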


\begin{proof}
Let $t = \frac{1}{2 \lambda_n}$. Using Cauchy-Schwarz inequality, Plancherel theorem and the inequality $p^D_t(x, y) \le p_t(x - y)$, we obtain
\formula{
  c \abs{\ph_n(x)} & \le e^{\lambda_n t} \abs{P^D_t(c \ph_n - \tilde{\ph}_n)(x)} + e^{\lambda_n t} \abs{P^D_t \tilde{\ph}_n(x)} \\
  & \le \sqrt{e} \cdot \sqrt{\int_{-\infty}^\infty (p_t(x - y))^2 dy} \cdot \norm{c \ph_n - \tilde{\ph}_n}_2 + \sqrt{e} \norm{\tilde{\ph}_n}_\infty \\
  & = \sqrt{\frac{e}{2 \pi} \int_{-\infty}^\infty e^{-2 t \abs{z}} dz} \cdot \norm{c \ph_n - \tilde{\ph}_n} + \sqrt{e} \norm{\psi_{\mu_n}}_\infty \\
  & = \sqrt{\frac{e}{2 \pi t}} \cdot \norm{c \ph_n - \tilde{\ph}_n} + \sqrt{e} \norm{\psi_{\mu_n}}_\infty ,
}
and the proposition follows.
\end{proof}

\begin{corollary}
\label{cor:bounded}
The functions $\ph_n(x)$ are uniformly bounded in $n \ge 1$ and $x \in D$. 
\end{corollary}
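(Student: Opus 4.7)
The plan is to combine the uniform estimate \eqref{eq:phi:uniform} from the preceding proposition with the $L^2$ approximation from Lemma~\ref{lem:int:phi} and the uniform bound \eqref{eq:err:psi} on the half-line eigenfunctions. First I would dispose of the finitely many small indices: for each fixed $n \le 3$, the eigenfunction $\ph_n$ is a continuous function on the bounded set $D$ (and is identically $0$ off $D$), hence bounded, so uniform boundedness for $n \le 3$ is immediate.

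For $n \ge 4$, I would insert the explicit bounds into \eqref{eq:phi:uniform}. Writing $c = \norm{\tilde{\ph}_n}_2$, the second summand $\frac{\sqrt{e}}{c}\norm{\psi_{\mu_n}}_\infty$ is handled directly: by \eqref{eq:int:phinorm} we have $c \ge \sqrt{1 - 0.52/\mu_n}$, which is bounded away from $0$ uniformly in $n \ge 4$, and by \eqref{eq:err:psi} we have $\norm{\psi_{\mu_n}}_\infty < 1.14$, independently of $n$. So this term is bounded by an absolute constant.

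The main quantity to control is the first summand,
\formula{
  \frac{1}{c}\sqrt{\frac{e\lambda_n}{\pi}} \cdot \norm{c\ph_n - \tilde{\ph}_n}_2 .
}
Here Lemma~\ref{lem:int:phi} gives $\norm{c\ph_n - \tilde{\ph}_n}_2 \le \frac{20}{3\pi}\sqrt{1.21 + \frac{8.00}{\mu_n} + \frac{13.66}{\mu_n^2}} \cdot \frac{1}{\mu_n}$, and Theorem~\ref{th:int:lambda} gives $\lambda_n \le \mu_n + \frac{\pi}{10}$, so $\sqrt{\lambda_n} = O(\sqrt{\mu_n}) = O(\sqrt{n})$ while $\mu_n$ grows like $n$. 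Hence this summand is $O(1/\sqrt{n})$ as $n \to \infty$, in particular bounded for $n \ge 4$.

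The one step to be careful about is ensuring $c$ stays bounded away from zero — but this is given for free by the lower estimate in \eqref{eq:int:phinorm}, since $\mu_n \ge \mu_4 = \frac{15\pi}{8}$ when $n \ge 4$. No single step is really an obstacle: the work is just bookkeeping of already-established inequalities, and the uniform bound then follows by taking the maximum over the finitely many exceptional indices $n \in \{1,2,3\}$ together with the uniform estimate derived for $n \ge 4$.
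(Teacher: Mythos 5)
Your argument follows essentially the same route as the paper: for large $n$ you plug Lemma~\ref{lem:int:phi}, Theorem~\ref{th:int:lambda}, \eqref{eq:int:phinorm} and \eqref{eq:err:psi} into \eqref{eq:phi:uniform} (the paper does this for $n \ge 7$ to get the sharper constant $3$ in \eqref{eq:phi:five}; your threshold $n \ge 4$ is fine for the qualitative statement, since Lemma~\ref{lem:int:phi} is stated for $n \ge 4$ and $c = \norm{\tilde{\ph}_n}_2$ is bounded away from $0$ there), and the remaining finitely many indices are treated separately.

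One small repair is needed in your treatment of $n \le 3$: continuity of $\ph_n$ on the \emph{open} bounded interval $D$ together with $\ph_n \equiv 0$ on $D^c$ does not by itself imply boundedness (a function continuous on $(-1,1)$ can blow up at the endpoints); you would need continuity up to $\cl{D}$ with zero boundary values, which is true but is an additional fact. The cleaner fix, and the one the paper uses for its small indices, is to invoke the standard bound $\norm{\ph_n}_\infty \le \sqrt{e \lambda_n / \pi}$ stated just before the Proposition (together with $\lambda_n \le \frac{n\pi}{2}$), which disposes of the exceptional indices immediately. With that substitution your proof is correct and matches the paper's argument.
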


More precisely, for $n \ge 1$ we have
\formula[eq:phi:five]{
  \norm{\ph_n}_\infty & \le 3 .
}
Indeed, for $n \ge 7$ this follows from~\eqref{eq:phi:uniform} when the right-hand side is estimated using Theorem~\ref{th:int:lambda}, Lemma~\ref{lem:int:phi} and~\eqref{eq:err:psi}. For $n \le 6$ it is a consequence of $\norm{\ph_n}_\infty \le \sqrt{\frac{e \lambda_n}{\pi}}$ and $\lambda_n \le \frac{n \pi}{2}$.

%
%

\section{Numerical estimates}
\label{sec:num}

\noindent
In this section we give numerical estimates for the eigenvalues $\lambda_n$ of the semigroup $(P^D_t)$ when $D = (-1, 1)$. The following estimates hold true; the upper bounds are given in superscript and the lower bounds in subscript:
\formula[eq:num]{
\begin{array}{r@{\,}l@{\hspace{2cm}}r@{\,}l}
  \lambda_1 & = 1.157773883697^{92}_{58}  & \lambda_6    & = \hspace*{5pt}9.032852690^{50838}_{48857} \\
  \lambda_2 & = 2.75475474221^{695}_{510}   & \lambda_7    & = 10.6022930996^{3854}_{1113} \\
  \lambda_3 & = 4.31680106659^{758}_{303}   & \lambda_8    & = 12.1741182627^{6180}_{2585} \\
  \lambda_4 & = 5.8921474709^{4751}_{3908}  & \lambda_9    & = 13.744109059^{44402}_{39799} \\
  \lambda_5 & = 7.460175739^{41122}_{39764} & \lambda_{10} & = 15.3155549960^{8382}_{2690}
\end{array}
}
This is the result of numerical computation of the eigenvalues of $900 \times 900$ matrices using \emph{Mathematica 6.01}. Different methods are used for the upper and lower bounds, as is described below. For the introduction to the notions of the Green operator and the Green function, the reader is referred to e.g.~\cite{bib:bg68}. The explicit formula for the Green function of the interval was first obtained by M.~Riesz~\cite{bib:r38}.

\subsection{Upper bounds}
\label{sec:num:upper}

For the upper bounds, we use the Rayleigh-Ritz method, see e.g.~\cite{bib:v73}. Let $G_D$ be the Green operator for $P^D_t$. Then $G_D \ph_n = \frac{1}{\lambda_n} \ph_n$. The following min-max variational characterization of eigenvalues of $G_D$ is well known, see e.g.~\cite{bib:rs78}:
\formula[eq:wariat]{
  \frac{1}{\lambda_n} & = \max \set{\min_{f \in E} R(f) \; : \; \text{$E$ is $n$-dimensional subspace of $L^2(D)$}} ,
}
where $R(f)$ is the Rayleigh quotient for $G_D$,
\formula{
  R(f) & = \frac{\int_{-1}^1 f(x) G_D f(x) dx}{\norm{f}_2^2} .
}
Let $f_n$, $n = 1, 2, ...$, be a complete orthonormal system in $L^2(D)$, and let $E_N$ be the subspace spanned by $f_n$, $n = 1, 2, ..., N$. By replacing $L^2(D)$ by $E_N$ in~\eqref{eq:wariat}, we clearly obtain the upper bound $\lambda_{n,N}^+$ for $\lambda_n$, $n = 1, 2, ..., N$. On the other hand, $(\lambda_{n, N}^+)^{-1}$ is the $n$-th largest eigenvalue of the $N \times N$ matrix $A_N$ of coefficients $a_{m,n}$ of the operator $G_D$ in the basis $(f_1, f_2, ..., f_N)$ (note that $a_{m,n}$ do not depend on $N$).

The main difficulty is to find a convenient basis $f_n$ for which the approximations converge sufficiently fast, while the entries of $A_N$ can be computed explicitly.

For the sake of comparison, recall that analytical computation in \cite{bib:bk04} gives the upper bound $\frac{3 \pi}{8} \approx 1.178$. Our first attempt to use the Rayleigh-Ritz method for $\A_D$ instead of $G_D$, with $f_n(x) = \sin(\frac{n \pi}{2} (x + 1))$, resulted in relatively poor estimates. For example, for $N = 1000$ the upper bound for the first eigenvalue is $\lambda_{1,1000} \approx 1.1579$, accurate up to third decimal place. A more efficient approach, described below, uses Legendre polynomials.

We begin with computation the values of the Green operator of the interval $(-1,1)$ on the polynomials $g_n(x) = x^n$. Recall that the Green function of the interval $D = (-1,1)$ for the Cauchy process is given by
\formula{
  G_D(x,y) & = \frac{1}{2\pi}\int_0^{\frac{(1-x^2)(1-y^2)}{(x-y)^2}}\frac{du}{\sqrt{u}\sqrt{u+1}}
  = \frac{1}{\pi} \log\frac{1-xy+\sqrt{1-x^2}\sqrt{1-y^2}}{|x-y|}\/,
}
where $x,y \in D$. Integrating by parts gives, after some simplification,
\formula{
  G_D g_n(y) &= \int_{-1}^1 G_D(x,y) g_n(x) dx \\
  &= \frac{1}{\pi}\frac{\sqrt{1-y^2}}{n+1} \pv\int_{-1}^1 \frac{x^{n+1}\,dx}{\sqrt{1-x^2}(x-y)}\\
  &= \frac{1}{\pi}\frac{\sqrt{1-y^2}}{n+1} \int_{-1}^1 \frac{(x^{n+1}-y^{n+1})\,dx}{\sqrt{1-x^2}(x-y)}+\frac{1}{\pi}\frac{\sqrt{1-y^2}y^{n+1}}{n+1}I(y) ,
}   
where
\formula{
I(y) = \pv\int_{-1}^1 \frac{dx}{\sqrt{1-x^2}(x-y)}\/.
}   
The indefinite integral is given by
\formula{
  \frac{1}{\sqrt{1-y^2}} \log \frac{|x-y|\sqrt{1-y^2}}{2(1-xy+\sqrt{1-x^2}\sqrt{1-y^2})} ,
}
and therefore $I(y) = 0$. Consequently, we have
\formula{
  G_D g_n(y) & = \frac{1}{\pi} \frac{\sqrt{1 - y^2}}{n + 1} \int_{-1}^1 \frac{(x^{n+1} - y^{n+1}) \, dx}{\sqrt{1 - x^2} (x - y)} \\
  & = \frac{1}{\pi} \frac{\sqrt{1 - y^2}}{n + 1} \sum_{i = 0}^n y^{n-i} \int_{-1}^1 \frac{x^i dx}{\sqrt{1 - x^2}} \\
  & = \frac{1}{\sqrt{\pi}} \frac{\sqrt{1 - y^2}}{n + 1} \sum_{j = 0}^{\floor{\frac{n}{2}}} y^{n - 2j} \frac{\Gamma(j + \frac{1}{2})}{\Gamma(j + 1)} \/ .
}  
Finally, for $m, n = 0, 1, 2, ...$ such that $m + n$ is even we get
\formula{
  G_{m,n} &= \int_{-1}^1 g_m(y) G_D g_n(y) dy \\
  & = \frac{1}{n + 1} \sum_{j = 0}^{\floor{\frac{n}{2}}} \frac{\Gamma(j + \frac{1}{2})}{\sqrt{\pi} \, \Gamma(j + 1)} \int_{-1}^1 \sqrt{1 - y^2} \, y^{n+m-2j} dy \\
  & = \frac{1}{2(n + 1)} \sum_{j = 0}^{\floor{\frac{n}{2}}} \frac{\Gamma(j + \frac{1}{2})}{\Gamma(j + 1)} \frac{\Gamma(\frac{n + m + 1}{2} - j)}{\Gamma(\frac{n + m}{2} + 2 - j)} \/ .
}
By simple induction, one can prove that in this case
\formula[eq:num:gmn]{
  G_{m,n} & = \begin{cases}
    \dfrac{1}{m+n+2} \cdot \dfrac{\Gamma(\frac{m+1}{2})}{\Gamma(\frac{m}{2} + 1)} \cdot  \dfrac{\Gamma(\frac{n+1}{2})}{\Gamma(\frac{n}{2}+1)} & \text{for $m$, $n$ even} , \vspace*{5pt} \\
    \dfrac{1}{m+n+2} \cdot \dfrac{\Gamma(\frac{m}{2} + 1)}{\Gamma(\frac{m + 3}{2})} \cdot \dfrac{\Gamma(\frac{n}{2} + 1)}{\Gamma(\frac{n + 3}{2})} & \text{for $m$, $n$ odd} .
  \end{cases}
}
If $m + n$ is odd, we obviously have $G_{m,n} = 0$. 

The Legendre polynomials are defined by
\formula{
  f_n(x) & = \sum_{i=0}^{\floor{\frac{n}{2}}} c_{n,i} x^{n-2i} , 
}
where 
\formula[eq:num:cin]{
  c_{n,i} &= \frac{(-1)^i (2n-2i)!}{2^n i!(n-i)!(n-2i)!} = \frac{(-1)^i \Gamma(2n-2i+1)}{2^n i!\Gamma(n-i+1)\Gamma(n-2i+1)},
}
form the orthogonal basis in $L^2(D)$. Therefore, we have
\formula[eq:num:upper]{
  a_{m,n} & = \int_{-1}^1 f_m(y) G_D f_n(y) dy
  = \sum_{i = 0}^{\floor{\frac{m}{2}}} \sum_{j = 0}^{\floor{\frac{n}{2}}} c_{n,i} c_{m,j} G_{i,j} ,
}
with $c_{n,i}$ and $G_{i,j}$ given by~\eqref{eq:num:gmn} and~\eqref{eq:num:cin}. The upper bound for $\lambda_n$ is $\lambda_{n,N}^+$, where $(\lambda_{n,N}^+)^{-1}$ is the $n$-th greatest eigenvalue of the $N \times N$ matrix $A_N = (a_{n,m})$.

\subsection{Lower bounds}
\label{sec:num:lower}

To find the lower bounds to the eigenvalues of the problem (\ref{eq:intro:1})-(\ref{eq:intro:3}) for an interval $D=(-1,1)$ we apply the Weinstein-Aronszajn method of intermediate problems. More precisely, we use (with small changes in the notation) the method described in~\cite{bib:fk83} (section \emph{The method}), where the sloshing problem is considered. For more details, see~\cite{bib:fk83} and the references therein.

The analytic function $\sin(z) = (\sin \xi \cosh \eta, \sinh \xi \cos \eta)$, where $z = \xi + i \eta$, transforms the semi-infinite strip $R= \{(\xi,\eta) \in \R^2 \, : \, -\frac{\pi}{2} \le \xi \le \frac{\pi}{2}, \, \eta \ge 0\}$ onto the upper half-space $H = \{(x, y) \in \R^2 \, : \, y \ge 0\}$. Let $u$ be a solution to the eigenproblem \eqref{eq:intro:1}-\eqref{eq:intro:3} with $D = (-1, 1)$. Then the image $v(z) = u(\eta(z))$ of the function $u$ under $\eta$ is a solution to the following equivalent problem 
\formula[]{
  \label{eq:num:low:equiv:1} & \Delta v(\xi, \eta) = 0 && -\sfrac{\pi}{2}< \xi < \sfrac{\pi}{2}  , \, \eta > 0 \/, \\
  \label{eq:num:low:equiv:2} & \sfrac{\partial}{\partial \eta} v(\xi, 0) = -\lambda \cos \xi\, v(\xi, 0) && \sfrac{\pi}{2}\leq \xi \leq \sfrac{\pi}{2} , \, \eta=0 \/,\\
  \label{eq:num:low:equiv:3} & v(-\sfrac{\pi}{2}, \eta) = v(\sfrac{\pi}{2}, \eta) = 0 && \eta\geq 0  \/.
}
For $f \in L^2(-\frac{\pi}{2}, \frac{\pi}{2})$ we denote by $A f$ (not to be confused with $\A f$) the normal derivative of the harmonic function agreeing with $f$ on $(-\frac{\pi}{2}, \frac{\pi}{2})$ and vanishing on $\{-\frac{\pi}{2}, \frac{\pi}{2} \} \times [0,\infty)$ (this is an analogue of the Dirichlet-Neumann operator). Since $v(\xi, \eta) = \sin(k (\xi + \frac{\pi}{2})) e^{-k \eta}$ satisfies~\eqref{eq:num:low:equiv:1} and~\eqref{eq:num:low:equiv:3}, the eigenfunctions of $A$ are simply $g_k(\xi) = \sqrt{\frac{2}{\pi}} \sin(k (\xi + \frac{\pi}{2}))$, and $A g_k = k g_k$.

We define the operator of multiplication by the function $\sign \xi \sqrt{1 - \cos \xi}$
\formula{
  (T f)(\xi) & = \sign \xi \sqrt{1-\cos \xi} \, f(\xi), && f\in L^2(-\sfrac{\pi}{2}, \sfrac{\pi}{2}) .
}
The problem~\eqref{eq:num:low:equiv:1}--\eqref{eq:num:low:equiv:3} can be written in the operator form as
\formula[eq:num:low:OperatorForm]{
  (A f)(\xi) & = \lambda (1-T^2) f(\xi) .
}
Let $P_N$ be the orthogonal projection of $L^2(D)$ onto a linear subspace $E_N$ of $L^2(D)$ spanned by the first $N$ of the linearly dense set of functions $f_1, f_2, ...$. Then the eigenvalues $\lambda_{n,N}^-$ of the spectral problem 
\formula[Truncated_A]{
  A f & = \lambda (1 - T P_N T) f
}
are lower bounds for the eigenvalues of~\eqref{eq:num:low:OperatorForm} and consequently to the eigenvalues $\lambda_n$ of the problem~\eqref{eq:num:low:equiv:1}--\eqref{eq:num:low:equiv:3}. Roughly, this is because
\formula{
  \int_{-\frac{\pi}{2}}^{\frac{\pi}{2}} f(x) T P_N T f(x) dx & = \norm{P_N T f(x)}_2^2 \le \norm{T f(x)}_2^2 = \int_{-\frac{\pi}{2}}^{\frac{\pi}{2}} f(x) T^2 f(x) dx ,
}
and so the Rayleigh quotient associated with~\eqref{Truncated_A} is dominated by the Rayleigh quotient for~\eqref{eq:num:low:OperatorForm}, namely
\formula{
  \frac{\int_{-\frac{\pi}{2}}^{\frac{\pi}{2}} f(x) A f(x) dx}{\int_{-\frac{\pi}{2}}^{\frac{\pi}{2}} f(x) (1 - T P_N T) f(x) dx} & \le \frac{\int_{-\frac{\pi}{2}}^{\frac{\pi}{2}} f(x) A f(x) dx}{\int_{-\frac{\pi}{2}}^{\frac{\pi}{2}} f(x) (1 - T^2) f(x) dx} \, .
}
The problem~\eqref{Truncated_A} is called the intermediate problem. We will later choose $f_n$ so that each $T f_n$ is a linear combination of $g_i$, the eigenfunctions of $A$, say
\formula[beta_definition]{
  T f_n & = \sum_{i=1}^K c_{n,i} g_i, & n = 1, 2, ..., N ,
}
where $K \ge N$. Let $C$ be the $N \times K$ matrix with entries $c_{n,i}$, and let $B$ be the $N \times N$ Gram matrix of the functions $f_1, ..., f_N$, i.e. the matrix with entries
\formula{
  b_{m, n} = \int_{-\frac{\pi}{2}}^{\frac{\pi}{2}} f_m(x) f_n(x) dx .
}
Finally, let $D$ be the $K \times K$ diagonal matrix of the first $K$ eigenvalues $1, 2, ..., K$ of $A$. Note that for each $j > K$, the function $g_j$ is the solution of~\eqref{Truncated_A} with eigenvalue $\lambda = j$ (this is because $T g_j = 0$). On the other hand, if $f$ is the linear combination of $g_1, g_2, ..., g_K$ with coefficients $\alpha = (\alpha_1, ..., \alpha_K)$, then $f$ satisfies~\eqref{Truncated_A} if and only if $\alpha$ is the solution to the $K \times K$ relative matrix eigenvalue problem,
\formula[eq:num:matrix]{
  D \alpha & = \lambda (I - C^T B^{-1} C) \alpha .
}
By arranging the eigenvalues of~\eqref{eq:num:matrix} and eigenvalues $K+1, K+2, ...$ in the nondecreasing order, we obtain the sequence of eigenvalues $\lambda_{n,N}^-$ of the intermediate problem~\eqref{Truncated_A}. As it was already noted, these are lower bounds for $\lambda_n$.

We define 
\formula{
  f_n(x) & = 2 \sqrt{1+\cos x} \, g_{n}(x).
}
It follows that
\formula{
   T f_n(x) & = 2 \sin x \, g_n(x) = (-1)^n g_{n-1}(x)+ (-1)^{n+1} g_{n+1}(x) ,
}
using the convention that $g_0(x) = 0$. Consequently, $C$ is $N \times (N+1)$ matrix of the form
\formula{
  C & = \begin{pmatrix}
    0 & 1 & 0 & 0 & \cdots &0&0&0\\
    1 & 0 & -1 & 0 & \cdots &0&0&0\\ 
    0 & -1& 0 & 1 & \cdots & 0&0&0\\
    0 & 0& 1 & 0 & \cdots & 0&0&0\\
    \vdots &\vdots &\vdots &\vdots &\ddots & \vdots& \vdots& \vdots\\
    0 & 0& 0 & 0 & \cdots & 0 & (-1)^N & 0 \\
    0 & 0& 0 & 0 & \cdots & (-1)^N & 0 & (-1)^{N+1}
    \end{pmatrix} .
}
The coefficients of the Gram matrix $B$ can be easily computed, and we have
\formula{
  b_{m,n} & = \dfrac{(-1)^{1 + \frac{m+n}{2}} \, 32 m n}{\pi ((m-n)^2 - 1)((m+n)^2 - 1)} + 4\delta_{m,n}
}
whenever $m + n$ is even, and $b_{m,n} = 0$ otherwise. Finally, the solutions of the spectral problem~\eqref{eq:num:matrix} are simply the inverses of the eigenvalues of the matrix $D^{-1} (I - C^T B^{-1} C)$. These numbers turn out to be less than $N + 2$, therefore they form $\lambda_{n,N}^-$, $n = 1, 2, ..., N+1$.

%
%

\appendix

%
%

\section{Estimates of $p_t - p^D_t$}

\noindent
Let $D = (0, \infty)$. Let $p_t(x, A) = \pr_x(X_t \in A)$ for $A \sub \R$, and fix $x > 0$. By the strong Markov property,
\formula{
  2 \pr_x(X_t \le 0) & = 2 \pr_x(X_t \in D^c) = \ex_x(2 p_{t - \tau_D}(X(\tau_D), D^c) \; ; \; \tau_D \le t) .
}
Since $2 p_s(y, D^c) \ge 1$ for $y \le 0$, $s > 0$, the right-hand side is bounded below by $\pr_x(\tau_D \le t)$. Therefore, for $t > 0$ and $x > 0$,
\formula[eq:C:tauD]{
  \pr_x(\tau_D \le t) & \le \frac{2}{\pi} \int_{-\infty}^0 \frac{t}{t^2 + (y - x)^2} \, dy = 1 - \frac{2}{\pi} \arctan \frac{x}{t} \le \min \expr{1, \frac{t}{x}} \, .
}
For $t > 0$, $x, y \in D = (0, \infty)$, we have (see formula~(2.9) in~\cite{bib:bk04})
\formula{
  \frac{p_t(y - x) - p^D_t(x, y)}{t} & = \frac{1}{t} \, \ex_x \expr{p_{t - \tau_D}(y - X(\tau_D)) \; ; \; \tau_D \le t} \\
  & = \frac{1}{\pi t} \, \ex_x \expr{\frac{t - \tau_D}{(t - \tau_D)^2 + (y - X(\tau_D))^2} \; ; \; \tau_D \le t} \\
  & \le \frac{1}{\pi y^2} \, \pr_x \expr{\tau_D \le t} \le \min \expr{\frac{1}{\pi y^2}, \frac{t}{\pi x y^2}} .
}
By symmetry, also
\formula{
  \frac{p_t(y - x) - p^D_t(x, y)}{t} & \le \min \expr{\frac{1}{\pi x^2}, \frac{t}{\pi x^2 y}} .
}
Since $p_t(y - x) \le \frac{1}{\pi t}$, we conclude that
\formula[eq:C:pDt]{
  0 \le \frac{p_t(y - x) - p^D_t(x, y)}{t} & \le \frac{1}{\pi} \, \min \expr{\frac{1}{t^2}, \frac{1}{x^2}, \frac{1}{y^2}, \frac{t}{x^2 y}, \frac{t}{x y^2}} , && t,x,y > 0 .
}
This estimate is used in Sections~\ref{sec:pre} and~\ref{sec:hl}.

%
%

\section{Properties of $\eta$ and $B$}

\noindent
In Section~\ref{sec:hl}, a function $\eta$ being the generalized Hilbert transform of $-\arctan t_-$ is sought. More precisely, $\eta$ is the function satisfying $\eta(0) = 0$ and
\formula[eq:aux:int0]{
  \eta'(t) & = \frac{1}{\pi} \pv\int_{-\infty}^0 \frac{1}{(t - s)(1 + s^2)} \, ds , && t \in \R ,
}
the integral being the Cauchy principal value when $t < 0$. Observe that
\formula{
  \int \frac{1}{(t - s)(1 + s^2)} \, ds & = \frac{1}{1 + t^2} \int \expr{\frac{s + t}{1 + s^2} + \frac{1}{t - s}} ds \\
  & = \frac{1}{1 + t^2} \expr{t \arctan s + \frac{1}{2} \log(1 + s^2) - \log |t - s|} .
}
Hence we have
\formula{
  \eta'(t) & = \frac{1}{(1 + t^2)} \expr{\frac{t}{2} - \frac{1}{\pi} \log |t|}
}
and so
\formula[eq:aux:int1]{
  \eta(t) & = \frac{\log (1 + t^2)}{4} - \frac{1}{\pi} \int_0^t \frac{\log |s|}{1 + s^2} \, ds , && t \in \R .
}
In particular,
\formula[eq:aux:etasym]{
  \eta(-t) & = -\eta(t) + \log \sqrt{1 + t^2} , && t \in \R .
}
The integrals of $\frac{\log |s|}{1 + s^2}$ over $(0, \infty)$ and over $(-\infty, 0)$ are zero (this follows by a substitution $u = \frac{1}{s}$), and the maximum and minimum, equal to the Catalan constant $\mathcal{C} \approx 0.916$ and to $-\mathcal{C}$ respectively, is attained at $-1$ and $1$. It follows that
\formula[eq:aux:est]{
  \frac{1}{4} \log(1 + t^2) - \frac{\mathcal{C}}{\pi} \le \eta(t) & \le \frac{1}{4} \log(1 + t^2) + \frac{\mathcal{C}}{\pi} , && t \in \R ,
}
and in particular,
\formula[eq:aux:intest]{
  e^{\eta(t)} & \sim \sqrt{|t|} && \text{as } |t| \rightarrow \infty .
}
On the other hand, by~\eqref{eq:aux:int0},
\formula{
  \eta'(t) & = \frac{1}{\pi} \frac{d}{d t} \int_{-\infty}^0 \frac{\log |t - s|}{1 + s^2} \, ds,
}
and for $t = 0$,
\formula{
  \int_{-\infty}^0 \frac{\log |s|}{1 + s^2} \, ds & = \expr{\int_0^1 + \int_1^\infty} \frac{\log s}{1 + s^2} \, ds = \int_0^1 \frac{\log s}{1 + s^2} ds + \int_0^1 \frac{-\log s}{1 + s^{-2}} \, \frac{ds}{s^2} = 0 .
}
Therefore,
\formula[eq:aux:int2]{
  \eta(t) & = \frac{1}{\pi} \int_{-\infty}^0 \frac{\log |t - s|}{1 + s^2} \, ds , && t \in \R .
}
A related holomorphic function $B$ plays a major role in Sections~\ref{sec:hl}--\ref{sec:pdt}. It is defined by
\formula[eq:aux:B]{
  B(z) & = \frac{1}{\pi} \int_{-\infty}^0 \frac{\log (z - s)}{1 + s^2} \, ds , && z \in \C .
}
Here we agree that $\log(z) = \log |z| + i \frac{\pi}{2}$ for $z \in (-\infty, 0]$, i.e. $\log$ (and therefore also $B$) is continuous on $(-\infty, 0]$ when approached from $\C_+$, but not from $\C_-$; see also Section~\ref{sec:B}. The function $\real B(z)$ is harmonic in $\C \setminus (-\infty, 0]$, continuous in whole $\C$ and $\real B(t) = \eta(t)$ for $t \in \R$. For $z \in \C$, we have
\formula{
  \real B(z) & = \frac{1}{\pi} \int_{-\infty}^0 \frac{\log |z - s|}{1 + s^2} \, ds \le \frac{1}{\pi} \int_{-\infty}^0 \frac{\log (|z| - s)}{1 + s^2} \, ds = \eta(|z|) ,
}
and in a similar manner
\formula{
  \real B(z) & = \frac{1}{\pi} \int_{-\infty}^0 \frac{\log |z - s|}{1 + s^2} \, ds \ge \frac{1}{\pi} \int_{-\infty}^0 \frac{\log \bigl|-|z| - s\bigr|}{1 + s^2} \, ds = \eta(-|z|) .
}
By~\eqref{eq:aux:est},
\formula[eq:aux:Best]{
  \frac{1}{4} \log(1 + |z|^2) - \frac{\mathcal{C}}{\pi} \le \real B(z) & \le \frac{1}{4} \log(1 + |z|^2) + \frac{\mathcal{C}}{\pi} \, , && z \in \C .
}
In particular,
\formula[eq:aux:Bintest]{
  |e^{B(z)}| & \sim \sqrt{|z|} && \text{as } |z| \rightarrow \infty .
}
This estimates are used in Section~\ref{sec:hl} and in Section~\ref{sec:pdt} in contour integration. We also have $\imag B(t) = \arctan t_-$; this can be shown directly, or using the first part of this section as follows. The function $\real B'(t) = \eta'(t)$ is the Hilbert transform of $(-\arctan t_-)'$, and at the same time $\real B'(t)$ is the Hilbert transform of $-\imag B'(t)$, hence $\imag B'(t) = (\arctan t_-)'$. Since $\imag B(0) = 0 = \arctan 0_-$, we conclude that $\imag B(t) = \arctan t_-$.

The following auxiliary computations related to the functions $\eta$ and $B$ are used in Sections~\ref{sec:hl} and~\ref{sec:err}. We have
\formula{
  \int \frac{\frac{\pi}{2} - \arctan s}{1 + s^2} \, ds & = \frac{\pi}{2} \arctan s - \frac{1}{2} (\arctan s)^2 ,
}
so that
\formula[eq:aux:pi8]{
  \frac{1}{\pi} \int_0^\infty \frac{\frac{\pi}{2} - \arctan s}{1 + s^2} \, ds & = \frac{\pi}{8} \, .
}

By a substitution $s = \frac{1}{\tan t}$,
\formula{
  \int_0^\infty \frac{\log(1 + s^2)}{1 + s^2} \, ds & = -2 \int_0^{\frac{\pi}{2}} \log \sin t dt .
}
We have
\formula{
  2 \int_0^{\frac{\pi}{2}} \log \sin t dt & = \int_0^{\frac{\pi}{2}} \log \sin t dt + \int_0^{\frac{\pi}{2}} \log \cos t dt = \int_0^{\frac{\pi}{2}} \log \sin (2t) dt - \frac{\pi \log 2}{2} \\
  & = \frac{1}{2} \int_0^\pi \log \sin u du - \frac{\pi \log 2}{2} = \int_0^{\frac{\pi}{2}} \log \sin u du - \frac{\pi \log 2}{2} \, .
}
Therefore,
\formula[eq:aux:sqrt2]{
  \frac{1}{\pi} \int_0^\infty \frac{\log(1 + s^2)}{1 + s^2} \, ds & = \log 2 .
}
Whenever $a > -1$ and $b > \frac{1 + a}{2}$, we have by a substitution $1 + t^2 = \frac{1}{s}$ and a formula for the beta integral,
\formula[eq:aux:beta]{
  \int_0^\infty \frac{t^a}{(1 + t^2)^b} \, dt & = \frac{1}{2} \int_0^1 (1-s)^{\frac{a-1}{2}} s^{b - \frac{a+3}{2}} ds = \frac{\Gamma(\frac{a+1}{2}) \Gamma(b - \frac{a+1}{2})}{2 \Gamma(b)} \, .
}
Also, by integration by parts and $\Gamma(\frac{1}{2}) = \sqrt{\pi}$,
\formula[eq:aux:32]{
  \int_0^\infty \frac{1 - e^{-t x}}{t^{3/2}} \, dt & = 2 x \int_0^\infty \frac{e^{-t x}}{\sqrt{t}} \, dt = 2 \sqrt{\pi x} , && x > 0 .
}

%
%

\section{Estimates for the generator on a piecewise smooth function}

\noindent
The following estimates are used in Section~\ref{sec:int}. Define an auxiliary piecewise $C^2$ function:
\formula[eq:aux:q]{
  q(x) & = \begin{cases}
  0 & \text{for } x \in (-\infty, -\sfrac{1}{3}) , \\
  \sfrac{9}{2} (x + \sfrac{1}{3})^2 & \text{for } x \in (-\sfrac{1}{3}, 0) , \\
  1 - \sfrac{9}{2} (x - \sfrac{1}{3})^2 & \text{for } x \in (0, \sfrac{1}{3}) , \\
  1 & \text{for } x \in (\sfrac{1}{3}, \infty) .
  \end{cases}
}
Note that $q(x) + q(-x) = 1$. Let $f$ be a piecewise $C^2$ function on $\R$ and let $g(x) = q(x) f(x)$. Suppose that $g$ has a compact support. We estimate $\A g(x)$ for $x \in (-1, 0)$.

Choose $M_0$, $M_1$ and $M_2$ so that $|f(x)| \le M_0$, $|f'(x)| \le M_1$, $|f''(x)| \le M_2$ for $x \in (-\frac{1}{3}, \frac{1}{3})$. Let $I = \int_0^\infty |f(x)| dx$. Then
\formula{
  \abs{g''(x)} & \le M_0 \abs{q''(x)} + 2 M_1 \abs{q'(x)} + M_2 \abs{q(x)} \le 9 M_0 + 6 M_1 + M_2 .
}
If $z \in (-1, -\frac{1}{3})$, then $g(z) = 0$, and so $\A g(z)$ is estimated (up to the factor $\frac{1}{\pi}$) by
\formula{
  \int_{-\frac{1}{3}}^\infty \frac{\abs{g(x)}}{(x - z)^2} dx & \le M_0 \int_{-\frac{1}{3}}^{\frac{1}{3}} \frac{q(x)}{(x - z)^2} dx + \frac{9}{4} \int_{\frac{1}{3}}^{\infty} \abs{f(x)} dx \le 3 M_0 + \frac{9 I}{4} ;
}
here we used $\frac{q(x)}{(x - z)^2} \le \frac{9}{2}$ for $x \in (-\frac{1}{3}, \frac{1}{3})$ in the second inequality. For $z \in (-\frac{1}{3}, 0)$ the principal value integral in the definition of $\A$ can be estimated by splitting it into two parts. By Taylor's expansion of $g$, we have
\formula{
  \abs{\pv\int_{z-\frac{1}{3}}^{z+\frac{1}{3}} \frac{g(x) - g(z)}{(x - z)^2} dx} & \le \sfrac{2}{3} \cdot \sfrac{1}{2} \sup \set{\abs{g''(x)} \; : \; x \in (z-\sfrac{1}{3}, z+\sfrac{1}{3})} \\
  & \le \sfrac{1}{3} \sup \set{\abs{g''(x)} \; : \; x \in (-\sfrac{1}{3}, \sfrac{1}{3})} \le 3 M_0 + 2 M_1 + \sfrac{2 M_2}{3} ;
}
for the second inequality note that $g''(x) = 0$ for $x < -\frac{1}{3}$. Furthermore,
\formula{
  & \abs{\expr{\int_{-\infty}^{z-\frac{1}{3}} + \int_{z+\frac{1}{3}}^\infty} \frac{g(x) - g(z)}{(x - z)^2} dx} \\
  & \le |g(z)| \expr{\int_{-\infty}^{z-\frac{1}{3}} + \int_{z+\frac{1}{3}}^\infty} \frac{1}{(x - z)^2} dx + 9 \int_{z+\frac{1}{3}}^\infty |f(x)| dx \le 6 M_0 + 9 I .
}
We conclude that
\formula[]{
  \label{eq:aux:genest1} \abs{\A g(z)} & \le \frac{3 M_0 + \frac{9}{4} I}{\pi} \, , && z \in (-1, -\sfrac{1}{3}) ; \\
  \label{eq:aux:genest2} \abs{\A g(z)} & \le \frac{9 M_0 + 2 M_1 + \frac{2}{3} M_2 + 9 I}{\pi} \, , && z \in (-\sfrac{1}{3}, 0) .
}

%
%

\begin{acknowledgments}
The authors thank Nikolay Kuznetsov for pointing out the similarity of the spectral problem considered in this article and the sloshing problem.
\end{acknowledgments}

%
%

%
%

\end{document}